\setlist{itemsep=1ex, topsep=0ex}
\newcommand{\thzfc}{\mathsf{ZFC}}
\newcommand{\thch}{\mathsf{CH}}
\newcommand{\Ed}{\mathsf{Ed}}
\newcommand{\Mbf}{\mathsf{M}}
\newcommand{\Cn}{\mathsf{Cn}}
\newcommand{\Bwf}{\mathcal{B}}
\newcommand{\Gwf}{\mathcal{G}}
\newcommand{\Hwf}{\mathcal{H}}
\newcommand{\Iwf}{\mathcal{I}}
\newcommand{\Jwf}{\mathcal{J}}
\newcommand{\Mwf}{\mathcal{M}}
\newcommand{\Nwf}{\mathcal{N}}
\newcommand{\Pwf}{\mathcal{P}}
\newcommand{\Swf}{\mathcal{S}}
\newcommand{\bfrak}{\mathfrak{b}}
\newcommand{\cfrak}{\mathfrak{c}}
\newcommand{\dfrak}{\mathfrak{d}}
\newcommand{\xfrak}{\mathfrak{x}}
\newcommand{\yfrak}{\mathfrak{y}}
\newcommand{\menos}{\smallsetminus}
\newcommand{\pts}{\mathcal{P}}
\newcommand{\frestr}{\mathord{\upharpoonright}}
\newcommand{\add}{\mathsf{add}}
\newcommand{\cov}{\mathsf{cov}}
\newcommand{\non}{\mathsf{non}}
\newcommand{\cof}{\mathsf{cof}}
\DeclareMathOperator{\limdir}{limdir}
\newcommand{\Bor}{\mathds{B}}
\newcommand{\PBor}{\mathds{PB}}
\newcommand{\Cor}{\mathds{C}}
\newcommand{\Dor}{\mathds{D}}
\newcommand{\Eor}{\mathds{E}}
\newcommand{\Loc}{\mathds{LOC}}
\newcommand{\Por}{\mathds{P}}
\newcommand{\Pbb}{\mathds{P}}
\newcommand{\Qor}{\mathds{Q}}
\newcommand{\Ior}{\mathds{I}}
\newcommand{\Ror}{\mathds{R}}
\newcommand{\Qnm}{\dot{\mathds{Q}}}
\newcommand{\SNwf}{\mathcal{SN}}
\newcommand{\R}{\mathbb{R}}
\newcommand{\cf}{\mbox{\rm cf}}
\newcommand{\sii}{{\ \mbox{$\Leftrightarrow$} \ }}
\newcommand{\la}{\langle}
\newcommand{\ra}{\rangle}
\newcommand{\sqsubm}{\sqsubset^{\rm m}}
\newcommand{\Seq}{\mathrm{seq}}
\newcommand{\Fr}{\mathsf{Fr}}
\newcommand{\Rbf}{\mathsf{R}}
\newcommand{\D}{\mathsf{D}}
\newcommand{\Cbf}{\mathsf{Cv}}
\newcommand{\Lc}{\mathsf{Lc}}
\newcommand{\aLc}{\mathsf{aLc}}
\newcommand{\Lb}{\mathsf{Lb}}
\newcommand{\Hcal}{\mathcal{H}}
\newcommand{\Scal}{\mathcal{S}}
\newcommand{\id}{\mathrm{id}}
\newcommand{\balc}{\mathfrak{b}^{\mathrm{aLc}}}
\newcommand{\dalc}{\mathfrak{d}^{\mathrm{aLc}}}
\newcommand{\scf}{\mathrm{scf}}
\newcommand{\Fn}{\mathrm{Fn}}
\newcommand{\baireincr}{\omega^{\uparrow\omega}}
\newcommand{\leqT}{\preceq_{\mathrm{T}}}
\newcommand{\eqT}{\cong_{\mathrm{T}}}
\newcommand{\gen}{\mathrm{gen}}
\newcommand{\hgt}{\mathrm{ht}}
\newcommand{\st}{\mid}
\newcommand{\set}[2]{\{#1 \st\allowbreak\,#2\}}
\newcommand{\largeset}[2]{\left\{#1 \;\middle|\; #2\right\}}
\newcommand{\seq}[2]{\la #1 \st\allowbreak\, #2\ra}
\newcommand{\baire}{\omega^\omega}
\newcommand{\cantor}{2^\omega}
\newcommand\subsetdot{\mathrel{\ooalign{$\subset$\cr
\hidewidth\hbox{$\cdot\mkern3mu$}\cr}}}
\newcommand{\supcov}{\mathsf{supcov}}
\newcommand{\supcof}{\mathsf{supcof}}
\newcommand{\minnon}{\mathsf{minnon}}
\newcommand{\minadd}{\mathsf{minadd}}
\newcommand{\Int}{\mathsf{int}}
\newcommand{\setand}{\bsp\text{and}\bsp}
\newcommand{\bsp}{\allowbreak\ }
\definecolor{ultramarineblue}{rgb}{0.25, 0.4, 0.96}
\definecolor{cornellred}{rgb}{0.7, 0.11, 0.11}
\definecolor{cobalt}{rgb}{0.0, 0.28, 0.67}
\definecolor{bleudefrance}{rgb}{0.19, 0.55, 0.91}
\definecolor{darkblue}{rgb}{0.0, 0.0, 0.55}
\definecolor{ferrarired}{rgb}{1.0, 0.11, 0.0}
\definecolor{brandeisblue}{rgb}{0.0, 0.44, 1.0}
\definecolor{azure(colorwheel)}{rgb}{0.0, 0.5, 1.0}
\definecolor{aqua}{rgb}{0.0, 1.0, 1.0}
\definecolor{aguamarina}{cmyk}{0.85,0,0.33,0}
\definecolor{cafe}{cmyk}{0,0.81,1,0.60}
\definecolor{canela}{cmyk}{0.14,0.42,0.56,0}
\definecolor{darkgray}{cmyk}{0,0,0,0.50}
\definecolor{emerald}{cmyk}{0.91,0,0.88,0.12}
\definecolor{fresa}{cmyk}{0,1,0.50,0}
\definecolor{gold}{cmyk}{0,0.10,0.84,0}
\definecolor{lightgray}{cmyk}{0,0,0,0.30}
\definecolor{marron}{cmyk}{0,0.72,1,0.45}
\definecolor{melon}{cmyk}{0,0.29,0.84,0}
\definecolor{ladri}{cmyk}{0,0.77,0.87,0}
\definecolor{olive}{cmyk}{0.64,0,0.95,0.40}
\definecolor{orange}{cmyk}{0,0.42,1,0}
\definecolor{peach}{cmyk}{0,0.46,0.50,0}
\definecolor{pink}{cmyk}{0,0.10,0.10,0}
\definecolor{orange}{cmyk}{0,0.42,1,0}
\definecolor{pine}{cmyk}{0.92,0,0.59,0.25}
\definecolor{purple}{cmyk}{0.45,0.86,0,0}
\definecolor{violet}{cmyk}{0.07,0.90,0,0.34}
\definecolor{craneorange}{RGB}{252,187,6}
\definecolor{red(ncs)}{rgb}{0.77, 0.01, 0.2}
\definecolor{ogreen}{RGB}{107,142,35}
\definecolor{sub0}{RGB}{29,32,137}
\definecolor{sub1}{RGB}{1,71,157}
\definecolor{sub2}{RGB}{1,104,183}
\definecolor{sub3}{RGB}{0,160,234}
\definecolor{sug}{RGB}{0,154,68}
\definecolor{suy}{RGB}{208,219,1}
\newcommand{\subiii}[1]{{\color{sub3}#1}}
\DeclareSymbolFont{extraup}{U}{zavm}{m}{n}
\DeclareMathSymbol{\varheart}{\mathalpha}{extraup}{86}
\DeclareMathSymbol{\vardiamond}{\mathalpha}{extraup}{87}
\definecolor{dodger}{rgb}{0.0,0.5,1.0}
\newcommand{\amber}[1]{\ifmmode\mathrel\fi{\color{amber}#1}}
\definecolor{amber}{rgb}{1.0,0.49,0.0}
\definecolor{ogreen}{RGB}{107,142,35}
\definecolor{sug}{RGB}{0,154,68}
\definecolor{sub2}{RGB}{1,104,183}
\title[]{More separations of cardinal characteristics of the strong measure zero ideal}
\author{Miguel A. Cardona}
\address[Miguel A. Cardona]{Einstein Institute of Mathematics\\
Edmond J. Safra Campus, Givat Ram\\
The Hebrew University of Jerusalem\\
Jerusalem, 91904, Israel}
\email{miguel.cardona@mail.huji.ac.il}
\urladdr{https://sites.google.com/mail.huji.ac.il/miguel-cardona-montoya/home-page}
 \author{Miroslav Repick\'y}
 \address[Miroslav Repick\'y]{Mathematical Institute\\
 Slovak Academy of Sciences\\
 Gre\v{s}\'akova 6\\
 040\,01 Ko\v{s}ice\\
 Slovak Republic}
 \email{repicky@saske.sk}
\author[S. Shelah]{Saharon Shelah}
\address[Saharon Shelah]{Einstein Institute of Mathematics,
Edmond J. Safra Campus, Givat Ram\\
The Hebrew University of Jerusalem, \\
Jerusalem, 91904, Israel; and
Department of Mathematics\\
Rutgers University\\
Piscataway, NJ 08854-8019, USA}
\email{shelah@math.huji.ac.il}
\urladdr{https://shelah.logic.at/}
\thanks{The first and third author would like to thank the Israel Science Foundation for partially supporting this research by grant 2320/23 (2023-2027); and the second author was supported by the grant VEGA 2/0104/24 of the Slovak Grant Agency VEGA and by the Slovak Research and Development Agency under the Contract no.\ APVV-20-0045}
\subjclass[2020]{03E17, 03E05, 03E35, 03E40}
\keywords{Strong measure zero, cardinal characteristics of the continuum, uf-extendable matrix iterations, linked property}
\date{}
\definecolor{burntumber}{rgb}{0.54, 0.2, 0.14}
\definecolor{burgundy}{rgb}{0.5, 0.0, 0.13}
\numberwithin{equation}{section}
\def\@roman#1{\romannumeral #1}
\newcounter{enuAlph}
\theoremstyle{plain}
\newtheorem{theorem}[equation]{Theorem}
\newtheorem{corollary}[equation]{Corollary}
\newtheorem{lemma}[equation]{Lemma}
\newtheorem{clm}[equation]{Claim}
\newtheorem{fact}[equation]{Fact}
\newtheorem{question}[equation]{Question}
\newtheorem{teorema}[enuAlph]{Theorem}
\theoremstyle{definition}
\newtheorem{definition}[equation]{Definition}
\newtheorem{example}[equation]{Example}
\newtheorem{remark}[equation]{Remark}
\begin{document}

\begin{abstract}
Let $\mathcal{N}$ be the $\sigma$-ideal of the null sets of reals. We introduce a new property of forcing notions that enable control of the additivity of $\mathcal{N}$ after finite support iterations.
This is applied to answer some open questions from the work of Brendle, the first author, and Mej\'ia~\cite{BCM2}.
\end{abstract}

\maketitle

\section{Introduction}\label{sec:intro}

This paper is part of the consistency results concerning the cardinal characteristics associated with the strong measure zero ideal, specifically looking into its additivity number with respect  to the bounding number and additivity number of the null ideal. We follow up on the study conducted in~\cite{BCM2} and address two remaining questions posed in that research.

The $\sigma$-ideal $\SNwf$ of the strong measure zero
subsets of the real line $\Ror$ (or of the Cantor space $\cantor$) has been receiving a lot of attention due to the discovery that \emph{Borel's conjecture}, which claims every strong measure zero set is countable, cannot be proved or disproved in $\thzfc$: $\thch$ implies that it is false and; on the other hand, R. Laver~\cite{LaverBorel} proved its consistency with $\thzfc$ using forcing. Afterward, the cardinal characteristics associated with $\SNwf$ have been a fascinating focus of study, especially in the context of Cicho\'n's diagram and other cardinal characteristics.

 Generally, these cardinals are characterized in the following manner: Let $\Iwf$ be an ideal of subsets of $X$ such that $\{x\}\in \Iwf$ for all $x\in X$. Throughout this paper, we demand that all ideals satisfy this latter requirement. We introduce the following four \emph{cardinal characteristics associated with $\Iwf$}:
\begin{align*}
 \add(\Iwf)&=\min\largeset{|\Jwf|}{\Jwf\subseteq\Iwf{,}\ \bigcup\Jwf\notin\Iwf},\\
 \cov(\Iwf)&=\min\largeset{|\Jwf|}{\Jwf\subseteq\Iwf{,}\ \bigcup\Jwf=X},\\
 \non(\Iwf)&=\min\set{|A|}{A\subseteq X{,}\ A\notin\Iwf},\textrm{\ and}\\
 \cof(\Iwf)&=\min\set{|\Jwf|}{\Jwf\subseteq\Iwf{,}\ \forall\, A\in\Iwf\ \exists\, B\in \Jwf\colon A\subseteq B}.
\end{align*}
These cardinals are referred to as the \emph{additivity, covering, uniformity} and \emph{cofinality of~$\Iwf$}, respectively. The relationship between the cardinals defined above is illustrated in \autoref{diag:idealI}.

\begin{figure}[ht!]
\centering
\begin{tikzpicture}[scale=1.2]
\small{
\node (azero) at (-1,1) {$\aleph_0$};
\node (addI) at (1,1) {$\add(\Iwf)$};
\node (covI) at (2,2) {$\cov(\Iwf)$};
\node (nonI) at (2,0) {$\non(\Iwf)$};
\node (cofI) at (4,2) {$\cof(\Iwf)$};
\node (sizX) at (4,0) {$|X|$};
\node (sizI) at (5,1) {$|\Iwf|$};

\draw (azero) edge[->] (addI);
\draw (addI) edge[->] (covI);
\draw (addI) edge[->] (nonI);
\draw (covI) edge[->] (sizX);
\draw (nonI) edge[->] (sizX);
\draw (covI) edge[->] (cofI);
\draw (nonI) edge[->] (cofI);
\draw (sizX) edge[->] (sizI);
\draw (cofI) edge[->] (sizI);
}
\end{tikzpicture}
\caption{Diagram of the cardinal characteristics associated with $\Iwf$. An arrow  $\mathfrak x\rightarrow\mathfrak y$ means that (provably in ZFC)
    $\mathfrak x\le\mathfrak y$.}
\label{diag:idealI}
\end{figure}
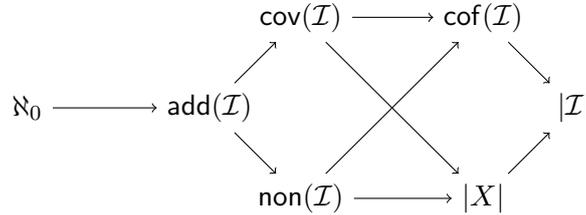

Given a formula $\phi$, $\forall^\infty\, n<\omega\colon \phi$ means that all but finitely many natural numbers satisfy~$\phi$; $\exists^\infty\, n<\omega\colon \phi$ means that infinitely many natural numbers satisfy $\phi$. For $f,g\in\baire$ define
\[f\leq^*g\text{ iff } \forall^\infty n\in\omega\colon f(n)\leq g(n).\]
We recall
\begin{align*}
    \bfrak &:=\min\set{|F|}{F\subseteq\baire\text{ and }\forall g\in\baire\ \exists f\in F:f\not\leq^* g},\\
    \dfrak &:=\min\set{|D|}{D\subseteq\baire\text{ and }\forall g\in\baire\ \exists f\in D:g\leq^* f},
\end{align*}
denote the \textit{bounding number} and the \textit{dominating number}, respectively. And denoted $\cfrak=2^{\aleph_0}$.

Denote by $\Nwf$ and $\Mwf$ the $\sigma$-ideals of Lebesgue null sets and the meager sets in~$\cantor$, respectively.  We fix some notation before defining the strong measure zero ideal.

\begin{itemize}
    \item For $s\in 2^{<\omega}$ denote $[s]:=\set{x\in\cantor}{ s\subseteq x}$.

    \item For $\sigma \in (2^{<\omega})^\omega$ define $\hgt_\sigma\colon \omega\to\omega$ by $\hgt_\sigma(n):=|\sigma(n)|$ for all $n<\omega$, which we call the \emph{height of $\sigma$}. Also, define
        \[[\sigma]_\infty:=\set{x\in\cantor}{\exists^\infty n\colon \sigma(n)\subseteq x}.\]
\end{itemize}

\begin{definition}\label{def:SN}
A set \emph{$X\subseteq\cantor$ has strong measure zero} if
\[\forall f\in\baire\, \exists \sigma\in(2^{<\omega})^\omega\colon f\leq^* \hgt_\sigma \text{ and }X\subseteq\bigcup_{i<\omega}[\sigma(i)].\]
Denote by $\SNwf$ the collection of strong measure zero subsets of $\cantor$.
\end{definition}

The cardinal characteristics associated with the strong measure zero ideal are the same for the spaces $\cantor$, $\R$, and $[0,1]$, see details in e.g.~\cite{CMR}. From now on, we work with $\SNwf=\SNwf(\cantor)$.

The following characterization of $\SNwf$ is quite practical.

\begin{lemma}\label{charSN}
Let $X\subseteq2^\omega$ and let $D\subseteq\baire$ be a dominating family. Then $X\subseteq2^\omega$ has strong measure zero in\/ $\cantor$ iff
\[\forall f\in D\, \exists\sigma\in (2^{<\omega})^\omega\colon f=\hgt_\sigma\textrm{\ and\ } X\subseteq[\sigma]_\infty.\]
\end{lemma}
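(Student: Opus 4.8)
The proof splits into the two implications; the right-to-left direction is the easy one and is where the dominating family is used, while the left-to-right direction is where the work lies.

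\emph{From right to left.} Assume the displayed condition holds for every $f\in D$, and let $g\in\baire$ be arbitrary; we must produce $\sigma$ witnessing strong measure zero of $X$ for $g$. Since $D$ is dominating, pick $f\in D$ with $g\leq^* f$, and take $\sigma$ with $\hgt_\sigma=f$ and $X\subseteq[\sigma]_\infty$. Then $g\leq^* f=\hgt_\sigma$, and since every point lying in infinitely many of the $[\sigma(i)]$ lies in at least one of them, $[\sigma]_\infty\subseteq\bigcup_{i<\omega}[\sigma(i)]$, whence $X\subseteq\bigcup_{i<\omega}[\sigma(i)]$. As $g$ was arbitrary, $X\in\SNwf$.

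\emph{From left to right.} Assume $X\in\SNwf$ and fix $f\in D$. Passing to the non-decreasing majorant $n\mapsto\max_{m\le n}f(m)$ and afterwards truncating each string of the resulting sequence to the originally prescribed (smaller) length, we may assume $f$ is non-decreasing. Fix once and for all a partition $\omega=\bigsqcup_{k<\omega}A_k$ into infinitely many infinite sets, $A_k=\{a^k_0<a^k_1<\cdots\}$. The plan is to build $\sigma$ so that $X\subseteq\bigcup_{n\in A_k}[\sigma(n)]$ for \emph{every} $k$; since the $A_k$ are infinitely many and pairwise disjoint, this forces each $x\in X$ into $\sigma(n)$ for infinitely many $n$, i.e.\ $X\subseteq[\sigma]_\infty$, while the lengths are exactly $f$ by construction. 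To fill $\sigma$ on a piece $A_k$: apply the definition of strong measure zero to the function $i\mapsto f(a^k_i)$, obtaining $\tau^k\in(2^{<\omega})^\omega$ with $f(a^k_i)\le|\tau^k(i)|$ for all large $i$ and $X\subseteq\bigcup_i[\tau^k(i)]$; whenever $f(a^k_i)\le|\tau^k(i)|$ set $\sigma(a^k_i):=\tau^k(i)\restrictto{f(a^k_i)}$ (so $[\tau^k(i)]\subseteq[\sigma(a^k_i)]$ and $|\sigma(a^k_i)|=f(a^k_i)$), and on the remaining finitely many coordinates of $A_k$ put arbitrary strings of the prescribed length. Then $\hgt_\sigma=f$, and $\sigma\restrictto{A_k}$ covers $X$ except for the set $E_k:=X\cap\bigcup_{\mathrm{bad}\ i}[\tau^k(i)]$, where ``bad'' means $f(a^k_i)>|\tau^k(i)|$.

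The only delicate point, and the main obstacle, is exactly this $E_k$: the definition of strong measure zero gives $f(a^k_i)\le|\tau^k(i)|$ only \emph{eventually}, so finitely many bad coordinates survive on each piece. For $X\subseteq[\sigma]_\infty$ it suffices that each $x\in X$ fails to lie in $E_k$ for infinitely many $k$, so one must choose the $\tau^k$ so that no $x\in X$ lies in $E_k$ for all but finitely many $k$. This is arranged by a recursion/bookkeeping argument, exploiting that each $E_k$ is again strong measure zero (a subset of $X$, and $\SNwf$ is an ideal), that it is contained in a clopen set, and that $\cantor\setminus X\neq\varnothing$ since strong measure zero sets are Lebesgue null: one interleaves inside each piece a secondary partition, recursively covers the shrinking strong-measure-zero remainders, and steers the discarded clopen pieces so that they contract toward a fixed point of $\cantor\setminus X$, forcing the remainders to decrease to $\varnothing$. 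Checking that this recursion exhausts $X$ is where the substance is; everything else is the routine truncation bookkeeping above. (Note that the left-to-right argument never uses that $D$ is dominating — it in fact produces such a $\sigma$ for \emph{every} $f\in\baire$ — so the dominating hypothesis is genuinely needed only for the converse.)
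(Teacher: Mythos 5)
Your right-to-left direction is correct and complete. The left-to-right direction, however, contains a genuine gap, and you have in fact located it yourself: everything hinges on showing that the witnesses $\tau^k$ can be chosen so that no $x\in X$ lies in the exceptional set $E_k$ for all but finitely many $k$, and this is exactly where your argument stops being a proof and becomes a wish. The mechanism you propose --- ``steering the discarded clopen pieces so that they contract toward a fixed point of $\cantor\smallsetminus X$'' --- is circular: to force the bad part $\bigcup_{\text{bad } i}[\tau^k(i)]$ of a witness into a prescribed small cylinder $[z\frestr l]$ around a point $z\notin X$, you would need a witness whose \emph{good} part already covers $Y\smallsetminus[z\frestr l]$ at every coordinate, and producing such an everywhere-good cover of an arbitrary strong measure zero set is precisely the statement under proof (applied to $Y\smallsetminus[z\frestr l]$). \autoref{def:SN} only guarantees the existence of \emph{some} witness; it gives no control over where its finitely many exceptional strings sit, those strings may be arbitrarily short (even empty, so that the corresponding cylinder is all of $\cantor$), and their union has no useful measure or diameter bound. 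Consequently neither a measure estimate nor a compactness argument closes the recursion, and the nested remainders need not decrease to $\varnothing$ without a further idea.

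What is actually needed, and what your write-up silently assumes, is the upgrade of the $\leq^*$ in \autoref{def:SN} to an everywhere inequality: for every $f$ there is $\sigma$ with $|\sigma(n)|\geq f(n)$ for \emph{all} $n$ and $X\subseteq\bigcup_n[\sigma(n)]$. With that in hand your partition argument works verbatim --- there are no bad coordinates, each block $A_k$ covers all of $X$, and $X\subseteq[\sigma]_\infty$ with $\hgt_\sigma=f$ follows at once. But that upgrade is the entire content of the lemma (it is the standard characterization; see e.g.\ Bartoszy\'nski--Judah or the treatment in~\cite{CMR}), not a bookkeeping afterthought, and it requires an argument you have not supplied. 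Your closing parenthetical observation that the forward direction needs no dominating hypothesis is correct for the statement, but moot while the forward direction itself is open.
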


Another important ideal related to $\SNwf$ is Yorioka's ideal defined below, which has played a~role significantly in studying the combinatoric of $\SNwf$.

\begin{definition}[Yorioka~{\cite{Yorioka}}]\label{DefYorio}
Set $\baireincr:=\set{d\in\omega^{\omega}}{d\text{\ is increasing}}$.
For $f\in\baireincr$ define the \emph{Yorioka ideal}
\[\Iwf_f:=\set{A\subseteq\cantor}{\exists\, \sigma\in(2^{<\omega})^\omega\colon f\ll \hgt_\sigma\text{\ and }A\subseteq[\sigma]_\infty},\]
where the relation $x \ll y$  denotes $\forall\, k<\omega\ \exists\, m_k<\omega\ \forall\, i\geq m_k\colon x(i^k)\leq y(i)$.
\end{definition}

The reason why it is used $\ll$ instead of $\leq^*$ is that the latter would not yield an ideal, as proved by Kamo and Osuga~\cite{KO08}.
Yorioka~\cite{Yorioka} has proved (indeed) that $\Iwf_f$ is a~$\sigma$-ideal when $f$ is increasing. By~\autoref{charSN} it is clear that $\SNwf=\bigcap\set{\Iwf_f}{f\in\baireincr}$ and $\Iwf_f \subseteq \Nwf$ for any $f\in\baireincr$.

Denote
\begin{align*}
    \minadd & :=\min\set{\add(\Iwf_f)}{f\in\baireincr}, &
    \supcov & :=\sup\set{\cov(\Iwf_f)}{ f\in\baireincr},\\
    \minnon & :=\min\set{\non(\Iwf_f)}{ f\in\baireincr}, &
    \supcof & :=\sup\set{\cof(\Iwf_f)}{ f\in\baireincr}.
\end{align*}
It is known from Miller that $\minnon = \non(\SNwf)$.
On the other hand, $\add(\Iwf_\id)$ is the largest of the additivities of Yorioka ideals and $\cof(\Iwf_\id)$ is the smallest of the cofinalities, similarly for $\non(\Iwf_\id)$ and $\cof(\Iwf_\id)$, where $\id$ denotes the identity function on $\omega$. See~\cite[Sec.~3]{CM} for a summary of the cardinal characteristics associated with the Yorioka ideals.

\autoref{Cichonwith_SN}~illustrates the known relations between the cardinal characteristics associated with $\SNwf$, $\Iwf$, and those in Cichoń’s diagram. See details in~\cite{Mi1982,JS89,P90,GJS,Yorioka,O08,CMR,cardona,CMR2,BCM2}. The quoted references also display that the diagram is mostly complete, however, there are still unanswered questions,  for instance, it is not known whether $\bfrak$, $\dfrak$, $\non(\Nwf)$, $\supcof$ and $\cof(\Nwf)$ are lower bounds of $\cof(\SNwf)$ (see~\cite[Sec.~8]{BCM2}), and whether $\add(\Nwf)=\minadd$ and $\cof(\Nwf)=\supcof$ (see~\cite[Sec.~6]{CM}).

\begin{figure}[ht]
\begin{center}
\includegraphics[width=\linewidth]{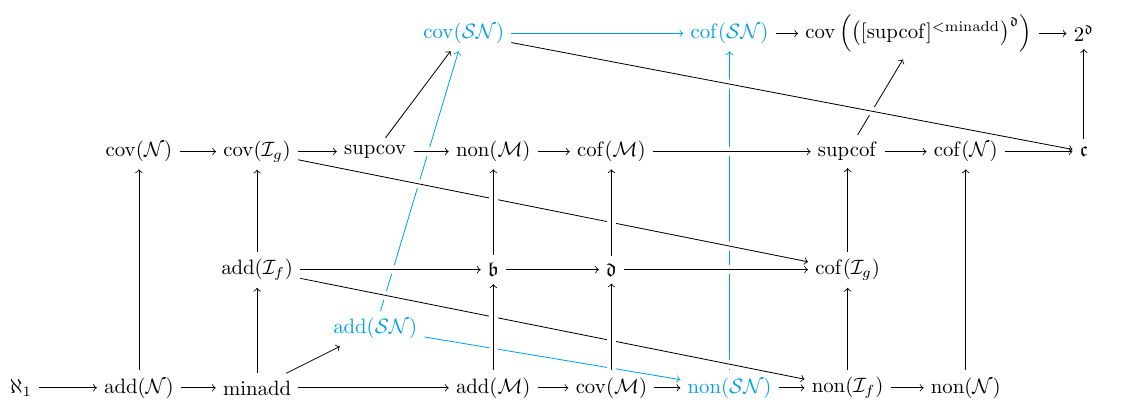}
\caption{
An arrow $\xfrak\rightarrow\yfrak$ means that
(provably in ZFC) $\xfrak\le\yfrak$.
Moreover, $\add(\Mwf)=\min\{\bfrak,\non(\SNwf)\}$ and $\cof(\Mwf)=\max\{\dfrak,\supcov\}$. The ideals $\Iwf_f$ and~$\Iwf_g$ are illustrated for arbitrary $f$ and $g$ to emphasize that the covering of any Yorioka ideal is an upper bound of the additivities of all Yorioka ideals, likewise for the cofinality and uniformity.}
\label{Cichonwith_SN}
\end{center}
\end{figure}

Regarding the consistency results, it was raised in~\cite[Sec.~8]{BCM2}:
\begin{enumerate}[beginpenalty=1000,midpenalty=10000,label=\rm(Q\arabic*)]
\item\label{addSN1} $\add(\Nwf) < \add(\SNwf) < \bfrak$.
\item\label{addSN2} $\add(\Nwf) < \bfrak < \add(\SNwf)$.
\end{enumerate}

Roughly speaking, in the context of FS iterations, only two approaches are known not to increase $\add(\Nwf)$: $\sigma$-centered posets do not increase $\add(\Nwf)$ as shown by Judah and Shelah~\cite{JS}, with improvements by Brendle~\cite{Br}, however, this is not enough to solve~\ref{addSN1}--\ref{addSN2}, because such posets can not be able to increase $\add(\SNwf)$ as discovered by Brendle, the first author, and Mej\'ia~\cite{BCM2}; and  $(\rho,\varrho)$-linked posets for a suitable pair $(\rho,\varrho)$ (a property between $\sigma$-centered and $\sigma$-linked, defined in~\cite{KO}, see~\autoref{link}) do not increase $\add(\Nwf)$ as proved by Brendle and Mej\'ia~\cite{BrM}. Consequently, to solve~\ref{addSN1}--\ref{addSN2}, the latter linked property seems to be the only way to address it. In connection with~\ref{addSN1}--\ref{addSN2}, the first author, Mej\'ia, and Rivera-Madrid introduced a~new $\sigma$-$n$-linked poset $\Qor_f$ for an increasing function $f\in\baire$ (\autoref{c5}) that increases $\add(\SNwf)$, but it is not clear whether such poset satisfies the $(\rho,\varrho)$-linked.

In this paper, we solve~\ref{addSN1}--\ref{addSN2}: we provide a new linked property that enables us to force the additivity of~$\Nwf$ small. Concretely, we introduce a new weaker linked property than $(\rho,\varrho)$-linked, called $\sigma$-$\bar\rho$-linked for a suitable sequence of functions $\bar\rho=\seq{\rho_n}{n\in\omega}$ in $\baire$ and prove that this property is good for keeping $\add(\Nwf)$ small in forcing
extensions.

\begin{teorema}[\autoref{d3}]\label{Thm:a4}
 Assume
that $h\in\baire$ converges to infinity. Then there are some $\Hwf_{\bar\rho,h}=\set{h_n}{n\in\omega}\subseteq\baire$ with $h_0=h$ such that any $\sigma$-$\bar\rho$-linked posets is $\Lc^*(\Hwf_{\bar\rho,h})$-good.
\end{teorema}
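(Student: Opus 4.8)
The statement is of the familiar shape ``a linkedness property implies a localization‑goodness property'': the prototypes are Judah--Shelah's theorem that $\sigma$‑centered posets do not increase $\add(\Nwf)$ (via Bartoszy\'nski's localization characterisation of $\add(\Nwf)$ and $\cof(\Nwf)$) and the Kamo--Osuga and Brendle--Mej\'ia versions for $(\rho,\varrho)$‑linked posets. What is new is that only the weaker hypothesis $\sigma$‑$\bar\rho$‑linked is available, and the slack this introduces is absorbed into the parameter $\Hwf_{\bar\rho,h}$ of the relational system, subject to the boundary requirement $h_0=h$. The plan is therefore: first, construct $\Hwf_{\bar\rho,h}=\set{h_n}{n\in\omega}$ from $h$ and $\bar\rho$; second, given a $\sigma$‑$\bar\rho$‑linked poset $\Pbb$ and a $\Pbb$‑name $\dot\phi$ for an object on the range side of $\Lc^*(\Hwf_{\bar\rho,h})$, manufacture from the linked components a countable family of ground‑model $\Lc^*(\Hwf_{\bar\rho,h})$‑slaloms; third, verify that this family is exactly the witness demanded by the definition of $\Lc^*(\Hwf_{\bar\rho,h})$‑goodness.

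For the construction I would use $h\to\infty$ to fix a sufficiently fast interval partition $\langle I_k:k\in\omega\rangle$ of $\omega$, and then define $h_n$ by recursion on $n$ with $h_0=h$, taking $h_{n+1}$ large enough that (block by block) the union of $\rho_n$‑many ``$h_n$‑wide'' pieces is again ``$h_{n+1}$‑wide''; this is where the rate of growth of $\langle h_n\rangle$ is dictated by $\bar\rho$. For the core step, fix a witnessing decomposition $\Pbb=\bigcup_{m<\omega}\Pbb_m$ into $\bar\rho$‑linked components. Using the $\bar\rho$‑linked structure of $\Pbb_m$, for each block $I_k$ one bounds by (roughly) $\rho_n(k)$ the number of distinct values that conditions of $\Pbb_m$ can force for $\dot\phi\restrictto{I_k}$ --- were there more, suitably many conditions of $\Pbb_m$ would have a common lower bound forcing too large a set --- so the ground‑model function $\psi_m$ given by $\psi_m(k):=\bigcup\set{v}{(\exists q\in\Pbb_m)\ q\Vdash\dot\phi\restrictto{I_k}=v}$ has controlled width and, by the choice of the $h_n$'s, is a legitimate $\Lc^*(\Hwf_{\bar\rho,h})$‑slalom.

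It then remains to check that $\set{\psi_m}{m<\omega}$ is the required witness for goodness. Let $x$ be a ground‑model element of the domain side of $\Lc^*(\Hwf_{\bar\rho,h})$ that is localised by none of the $\psi_m$, and let $p\in\Pbb$; choose $m$ with $p\in\Pbb_m$. Then there are infinitely many $k$ with $x\restrictto{I_k}\notin\psi_m(k)$, i.e.\ with $x\restrictto{I_k}\notin v$ for every value $v$ that some condition of $\Pbb_m$ forces for $\dot\phi\restrictto{I_k}$; since $p\in\Pbb_m$ and, by the design of the components, every extension of $p$ that decides $\dot\phi\restrictto{I_k}$ decides it inside that list, $p\Vdash x\restrictto{I_k}\notin\dot\phi$. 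As this happens for infinitely many $k$, $p\Vdash x\notin^*\dot\phi$, and since $p$ was arbitrary, $\Vdash x\notin^*\dot\phi$. This is precisely $\Lc^*(\Hwf_{\bar\rho,h})$‑goodness, and with $h_0=h$ it gives the theorem.

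I expect the main obstacles to be two bookkeeping issues, which the definitions of $\sigma$‑$\bar\rho$‑linked and the choice of $\bar\rho$ are presumably engineered to meet. The first is the clause used above that ``every extension of $p\in\Pbb_m$ deciding $\dot\phi\restrictto{I_k}$ decides it inside the list coming from $\Pbb_m$'': in the $\sigma$‑centered case this is obtained by reading $\dot\phi$ coordinate‑wise and taking unions of forced memberships over the whole component, but under the weaker hypothesis it must either be part of the definition of a $\bar\rho$‑linked component (for instance by requiring the components to carry ``interpretation'' data along the blocks, in the spirit of the $\uf$‑extendable framework) or be recovered by first replacing $\dot\phi$, using that $\Pbb$ is ccc, by an equivalent name decided block by block. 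The second is calibrating $\langle h_n\rangle$ against $\bar\rho$: the $h_n$ must grow fast enough to swallow the $\rho_n$‑fold blow‑up of widths at each level, yet $\Lc^*(\Hwf_{\bar\rho,h})$ must remain a genuine localization system (with $\aleph_1$‑sized unbounded families, so that the intended application of forcing $\add(\Nwf)$ small along a finite‑support iteration goes through), and this must be arranged while keeping $h_0=h$ fixed and choosing $\bar\rho$ compatibly.
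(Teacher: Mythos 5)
Your overall architecture matches the paper's: define $h_{n+1}$ from $h_n$ by absorbing the $\bar\rho$-blow-up, collect the values forced by conditions in each linked component into a ground-model slalom, bound its width using linkedness, and deduce goodness. The paper does this purely coordinatewise, with no interval partition: given a name $\dot\varphi$ for a member of $\Swf(\omega,h)$ and a witnessing sequence $\seq{Q_m}{m\in\omega}$, it sets $\varphi_m(k)=\set{i}{\exists q\in Q_m\colon q\Vdash i\in\dot\varphi(k)}$ and $\varphi'(k)=\bigcup_{m\leq k}\varphi_m(k)$. Linkedness gives $|\varphi_m(k)|\leq g_m(k):=\max\set{n}{\rho_m(n)\leq h(k)}$: if $|\varphi_m(k)|=n$, then $\rho_m(n)$ of the witnessing conditions have a common bound, which forces $\rho_m(n)$ distinct elements into a set of size $\leq h(k)$. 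Hence $\varphi'\in\Swf(\omega,h')$ with $h'(k)=\sum_{m\leq k}g_m(k)$, and one takes $h_{n+1}:=(h_n)'$. Note that the correct width bound is $g_m(k)$, essentially $\rho_m^{-1}(h(k))$, not ``roughly $\rho_n(k)$''; for $\sigma$-centered posets this collapses to $h(k)$ itself.

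The genuine problem is your verification step. The claim that ``every extension of $p\in\Pbb_m$ that decides $\dot\phi\restrictto{I_k}$ decides it inside the list coming from $\Pbb_m$'' is false: extensions of $p$ need not lie in $\Pbb_m$, so $x\restrictto{I_k}\notin\psi_m(k)$ does not yield $p\Vdash x\restrictto{I_k}\notin\dot\phi$. But---and this is where your diagnosis goes astray---no such claim is needed, and neither of your proposed remedies (building interpretation data into the components, or replacing the name using the ccc) is the right fix; the issue is no harder here than in the $\sigma$-centered case. The standard Judah--Shelah/Brendle argument runs the other way, by contradiction: suppose $f\not\in^*\varphi'$ yet some $p$ forces $f(k)\in\dot\varphi(k)$ for all $k\geq k_0$; extend $p$ into some $Q_m$ and consider $k\geq\max\{k_0,m\}$; then $p$ itself witnesses $f(k)\in\varphi_m(k)\subseteq\varphi'(k)$, contradicting $f\not\in^*\varphi'$. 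The $\bar\rho$-linkedness enters only in the width bound above, while the union over all $m\leq k$ in the definition of $\varphi'$ is exactly what absorbs the fact that $p$ could lie in any component. With this repair (and with $H=\{\varphi'\}$ as the witness for goodness, since a name for a member of $\Swf(\omega,\Hwf_{\bar\rho,h})$ lives in some $\Swf(\omega,h_n)$ and the lemma returns $\varphi'\in\Swf(\omega,h_{n+1})$), your outline becomes the paper's proof.
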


The inspiration for the preceding theorem comes from~\cite[Lem.~5.14]{BrM}. In~\autoref{sec:s4}, we introduce the $\sigma$-$\bar\rho$-linked property and give some examples: such a random forcing, $\Qor_f$, and $\sigma$-centered posets; and we prove~\autoref{Thm:a4}.

\autoref{Thm:a4} is used to prove our main consistency results regarding the separation of cardinal characteristics of $\SNwf$ and cardinals in Cicho\'n’s diagram simultaneously.

Our main results are as follows.

\begin{teorema}[\autoref{appl:I}]\label{Thm:a0}
Let $\theta_1\leq\theta_2\leq\theta_3\leq\theta_4\leq\theta_5\leq \theta_6\leq\theta_7=\theta_7^{\aleph_0}$ be uncountable regular cardinals and assume that $\cof([\theta_7]^{<\theta_i})=\theta_7$ for $1 \leq i \leq 5$.
Then we can construct a~ccc poset that forces~\autoref{Fig:addn0}.

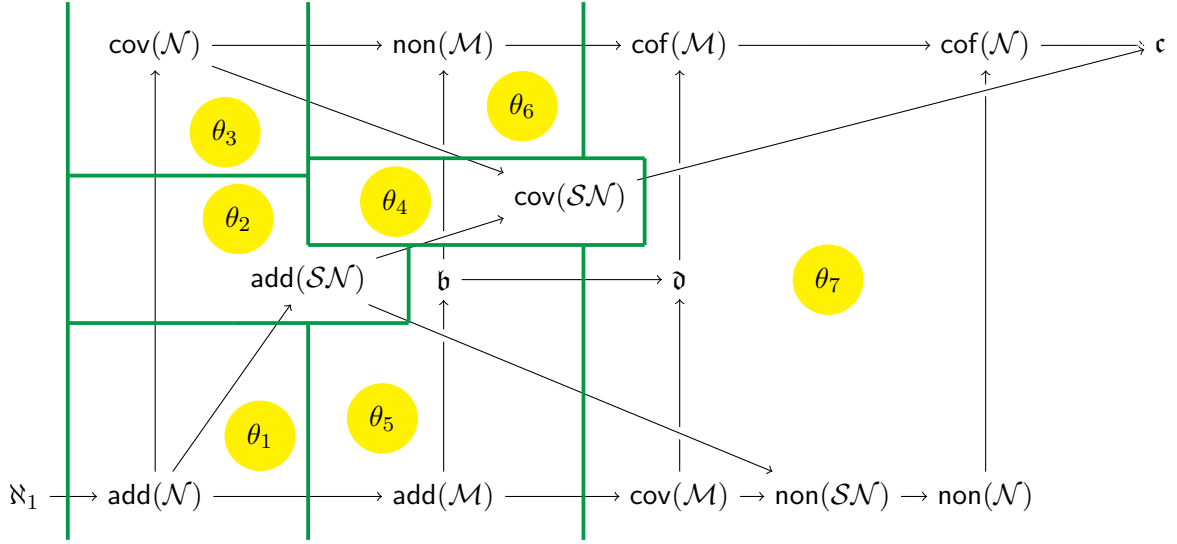
\begin{figure}[ht!]
\centering
\begin{tikzpicture}[scale=1.15]
\small{
\node (aleph1) at (-1,2.3) {$\aleph_1$};
\node (addn) at (0.5,2.3){$\add(\Nwf)$};
\node (covn) at (0.5,7.5){$\cov(\Nwf)$};
\node (nonn) at (10,2.3) {$\non(\Nwf)$} ;
\node (cfn) at (10,7.5) {$\cof(\Nwf)$} ;
\node (addm) at (3.8,2.3) {$\add(\Mwf)$} ;
\node (covm) at (6.5,2.3) {$\cov(\Mwf)$} ;
\node (nonm) at (3.8,7.5) {$\non(\Mwf)$} ;
\node (cfm) at (6.5,7.5) {$\cof(\Mwf)$} ;
\node (b) at (3.8,4.8) {$\bfrak$};
\node (d) at (6.5,4.8) {$\dfrak$};
\node (c) at (12,7.5) {$\cfrak$};
\node (covsn) at (5.25,5.75) {$\cov(\SNwf)$};
\node (addsn) at (2.25,4.8) {$\add(\SNwf)$};

\node (nonsn) at (8.25,2.3) {$\non(\SNwf)$} ;

\draw (aleph1) edge[->] (addn)
      (addn) edge[->] (covn)
      (covn) edge [->] (nonm)
      (nonm)edge [->] (cfm)
      (cfm)edge [->] (cfn)
      (cfn) edge[->] (c);

\draw   (covm) edge [->]  (nonsn)
(nonsn) edge [->]  (nonn)
(addm) edge [->]  (covm)
(addn) edge [->]  (addm)
(addn) edge [->]  (addsn)
(nonn) edge [->]  (cfn)
 (addm) edge [->] (b)
      (b)  edge [->] (nonm);
\draw (covm) edge [->] (d)
      (d)  edge[->] (cfm);
\draw (b) edge [->] (d);

\draw   (addsn) edge [line width=.15cm,white,-]  (covsn)
(addsn) edge [->]  (covsn);
\draw   (covsn) edge [line width=.15cm,white,-]  (c)
(covsn) edge [->]  (c);
\draw   (addsn) edge [line width=.15cm,white,-]  (nonsn)
(addsn) edge [->]  (nonsn);
(covsn) edge [->]  (c);

\draw   (covn) edge [line width=.15cm,white,-]  (covsn)
(covn) edge [->]  (covsn);
\draw[color=sug,line width=.05cm] (-0.5,1.8)--(-0.5,8);
\draw[color=sug,line width=.05cm] (-0.5,4.3)--(1.4,4.3);
\draw[color=sug,line width=.05cm] (2.25,1.8)--(2.25,4.3);
\draw[color=sug,line width=.05cm] (1.4,4.3)--(3.4,4.3);
\draw[color=sug,line width=.05cm] (3.4,4.3)--(3.4,5.2);
\draw[color=sug,line width=.05cm] (2.25,5.2)--(6.1,5.2);
\draw[color=sug,line width=.05cm] (2.25,5.2)--(2.25,8);
\draw[color=sug,line width=.05cm] (5.4,6.2)--(5.4,8);
\draw[color=sug,line width=.05cm] (-0.5,6)--(2.25,6);
\draw[color=sug,line width=.05cm] (2.25,6.2)--(6.1,6.2);
\draw[color=sug,line width=.05cm] (6.1,5.2)--(6.1,6.2);
\draw[color=sug,line width=.05cm] (5.4,1.8)--(5.4,5.2);
\draw[circle, fill=yellow,color=yellow] (1.7,3) circle (0.4);
\draw[circle, fill=yellow,color=yellow] (3.1,3.2) circle (0.4);
\draw[circle, fill=yellow,color=yellow] (3.25,5.7) circle (0.4);
\draw[circle, fill=yellow,color=yellow] (1.45,5.5) circle (0.4);
\draw[circle, fill=yellow,color=yellow] (1.3,6.5) circle (0.4);
\draw[circle, fill=yellow,color=yellow] (4.7,6.8) circle (0.4);
\draw[circle, fill=yellow,color=yellow] (8.2,4.8) circle (0.4);
\node at (1.7,3) {$\theta_1$};
\node at (3.1,3.2) {$\theta_5$};
\node at (3.25,5.7) {$\theta_4$};
\node at (1.3,6.5) {$\theta_3$};
\node at (1.45,5.5) {$\theta_2$};
\node at (4.7,6.8) {$\theta_6$};
\node at (8.2,4.8) {$\theta_7$};
}

\end{tikzpicture}
\caption{Constellation forced in~\autoref{Thm:a0}.}
\label{Fig:addn0}
\end{figure}
\end{teorema}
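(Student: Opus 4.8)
The plan is to force the constellation of \autoref{Fig:addn0} by a finite-support \emph{uf-extendable matrix iteration}, in the spirit of the matrix iterations of \cite{BCM2}: a two-dimensional array $\langle\PP_{\alpha,\xi}:\alpha\le\gamma,\ \xi\le\delta\rangle$ with $|\gamma|=\theta_7$, together with a coherent system of ultrafilters carried along the iteration, which is what lets us pry apart the \emph{non}/\emph{cov} pairs on the two sides of Cichoń's diagram. A bookkeeping device — using $\theta_7=\theta_7^{\aleph_0}$ to enumerate all relevant nice names and the hypotheses $\cof([\theta_7]^{<\theta_i})=\theta_7$ for $1\le i\le 5$ to supply cofinal systems of ``small'' index sets — distributes the iterands so that, cofinally in the appropriate sub-coordinates, we force with: amoeba forcing for measure (to pull $\add(\Nwf)$ and $\cof(\Nwf)$ to their values), the poset $\Qor_f$ of \autoref{c5} for a fixed increasing $f\in\baire$ (to push $\add(\SNwf)$ up to $\theta_2$ and $\cov(\SNwf)$ up to $\theta_4$), Hechler and (suitably bounded) eventually-different-real forcings and related posets (for $\bfrak=\add(\Mwf)=\theta_5$, $\non(\Mwf)=\theta_6$, and the remaining Cichoń values), random real forcing (for $\cov(\Nwf)=\theta_3$), Cohen forcing, and the $\sigma$-centered posets intrinsic to the uf-extendable template.

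The verification then splits into the usual two halves. For the lower bounds I would exhibit, for each characteristic, a cofinal family of generic reals: $\Qor_f$-generics — invoking the combinatorics of \autoref{c5} together with \autoref{charSN} — for $\add(\SNwf)\ge\theta_2$ and $\cov(\SNwf)\ge\theta_4$; random reals for $\cov(\Nwf)\ge\theta_3$; Hechler and eventually-different reals for the $\bfrak\ge\theta_5$ and $\non(\Mwf)\ge\theta_6$ bounds; amoeba generics for $\add(\Nwf)\ge\theta_1$; and a family of size $\theta_7$ of diagonal/Cohen generics giving $\cov(\Mwf)\ge\theta_7$, whence $\non(\Nwf)=\non(\SNwf)=\cov(\Mwf)=\dfrak=\cof(\Mwf)=\cof(\Nwf)=\cfrak=\theta_7$ follows for free from $\cfrak=\theta_7$ and the ZFC inequalities. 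For the matching upper bounds I would restrict to the appropriate intermediate sub-iteration $\PP_{\alpha,\xi}$ — this is exactly where the extended ultrafilters are used, to guarantee that the \emph{cov}/\emph{non} reals read off the submodel stay on the correct side of the relevant ideal in the full extension — and apply the standard $\LCU$/$\COB$ characterizations; all the $\SNwf$-bounds additionally feed in the preservation theorems of \cite{BCM2} and the identity $\minnon=\non(\SNwf)$ to pass from the Yorioka-ideal level to $\SNwf$.

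The genuinely new ingredient is keeping $\add(\Nwf)=\theta_1$ while $\add(\SNwf)$, $\cov(\SNwf)$, $\cov(\Nwf)$, $\non(\Nwf)$ and the rest all sit above it. Here I would fix once and for all a sequence $\bar\rho$ and an $h\in\baire$ converging to infinity, observe that \emph{every} iterand used past the initial $\theta_1$ amoeba steps — random forcing, $\Qor_f$, and each $\sigma$-centered piece — is $\sigma$-$\bar\rho$-linked, hence $\Lc^*(\Hwf_{\bar\rho,h})$-good by \autoref{Thm:a4}; since $\Lc^*$-goodness passes to finite-support ccc iterations (and to the matrix), the entire tail is $\Lc^*$-good and therefore adds no single slalom localizing all the ground-model reals, which gives $\add(\Nwf)\le\theta_1$, the reverse inequality coming from the first $\theta_1$ amoeba generics.

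The principal difficulty I anticipate is the simultaneous bookkeeping: one must interleave $\Qor_f$ (indispensable for $\add(\SNwf)$ and $\cov(\SNwf)$), amoeba, random, Cohen and eventually-different forcings, and the uf-template steps, in such a way that (i) the goodness argument of the previous paragraph still applies verbatim to the tail of the iteration, (ii) no intermediate value gets accidentally driven up to $\theta_7$ — which requires each $\theta_i$ to be witnessed by a genuinely cofinal family of generics relative to the right coordinate and by a correctly placed submodel on the other side — and (iii) the ultrafilter system extends through every iterand, in particular through $\Qor_f$, whose interaction with the uf-extendable matrix machinery is the least standard point and must be checked by hand.
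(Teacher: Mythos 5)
Your overall architecture — a ${<}\theta_5$-uf-extendable matrix iteration of height $\theta_7$, bookkeeping via $\theta_7=\theta_7^{\aleph_0}$ and $\cof([\theta_7]^{<\theta_i})=\theta_7$, restricted iterands for the lower bounds, and the new $\sigma$-$\bar\rho$-linked property feeding \autoref{d3} and \autoref{b8} to keep $\add(\Nwf)$ small — is the paper's approach. But two steps as you describe them would fail.

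First, the treatment of $\add(\Nwf)=\theta_1$. You propose ``the first $\theta_1$ amoeba steps'' for the lower bound and then argue the \emph{tail} is $\Lc^*$-good. An initial segment of amoeba (or localization) forcings cannot give $\add(\Nwf)\geq\theta_1$: a family of fewer than $\theta_1$ null sets coded at a late stage $\xi$ must be covered by a single null set added \emph{after} $\xi$, so the localizing steps must occur cofinally in $\pi$ (and cofinally above every column of the matrix). If instead you make unrestricted amoeba cofinal, you destroy the goodness of the tail and drive $\add(\Nwf)$ up. The paper resolves this by cofinally interleaving $\Loc^N$ for transitive models $N$ of size ${<}\theta_1$, chosen by the bookkeeping (E0) so that every $\Por_{\alpha,\eta_\rho}$-name for a set of reals of size ${<}\theta_1$ is eventually localized; these restricted iterands are $\theta_1$-$\Lc^*$-good simply because they are small (\autoref{b6}), while the \emph{unrestricted} iterands ($\Qor_f$, $\Eor$, $\Eor_b$, random) are handled by \autoref{d3} or by \autoref{b0}~\ref{b0:4}. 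Your proposal needs this cofinal-but-restricted structure, not an initial block.

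Second, you never explain how $\cov(\Nwf)$ is kept $\leq\theta_3$, and the tool you lean on cannot do it: as the paper stresses (see the acknowledgments), random forcing is itself $\sigma$-$\bar\rho$-linked, so $\sigma$-$\bar\rho$-linkedness gives no control over $\cov(\Nwf)$. The paper keeps $\cov(\Nwf)\leq\theta_3$ by making every iterand $\theta_3$-$\aLc^*(\varrho,\rho)$-good (\autoref{KOpre}), where $\aLc^*(\varrho,\rho)\leqT\Cbf_\Nwf^\perp$ under the summability condition $\sum_i\rho(i)^i/\varrho(i)<\infty$; this forces a careful recursive choice of $b,\rho,\varrho$ so that $\Eor_b$ is simultaneously $(\rho,\varrho^{\rho^{\id}})$-linked (\autoref{genlink}, for the $\cov(\Nwf)$ bound) \emph{and} $\sigma$-$\bar\rho$-linked (\autoref{d8}, for the $\add(\Nwf)$ bound), with random forcing itself entering only restricted to models of size ${<}\theta_3$. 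Without this second linkedness axis the value $\cov(\Nwf)=\theta_3$ is not justified. (Smaller points: the paper uses the unrestricted $\Eor$ at the matrix coordinates $\eta_\rho$, not Hechler, to separate $\non(\Mwf)=\theta_6$ from $\cov(\Mwf)=\theta_7$ via \autoref{matsizebd}, and the restricted $\Eor_b$ together with \autoref{c8} is what yields $\cov(\SNwf)=\theta_4$; $\Qor_f$ is responsible only for $\add(\SNwf)=\theta_2$.)
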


\begin{teorema}\label{Thm:a2}
 Let $\theta_1\leq\theta_2\leq\theta_3\leq\theta_4 \leq\theta_5\leq \theta_6\leq\theta_7$ be uncountable regular cardinals, and $\theta_8=\theta_8^{\aleph_0}\geq \theta_7$ satisfying\/ $\cof([\theta_8]^{<\theta_i}) = \theta_8$ for\/ $1\leq i\leq 5$. Then we can construct a~ccc poset that forces~\autoref{Fig:addn1}.
 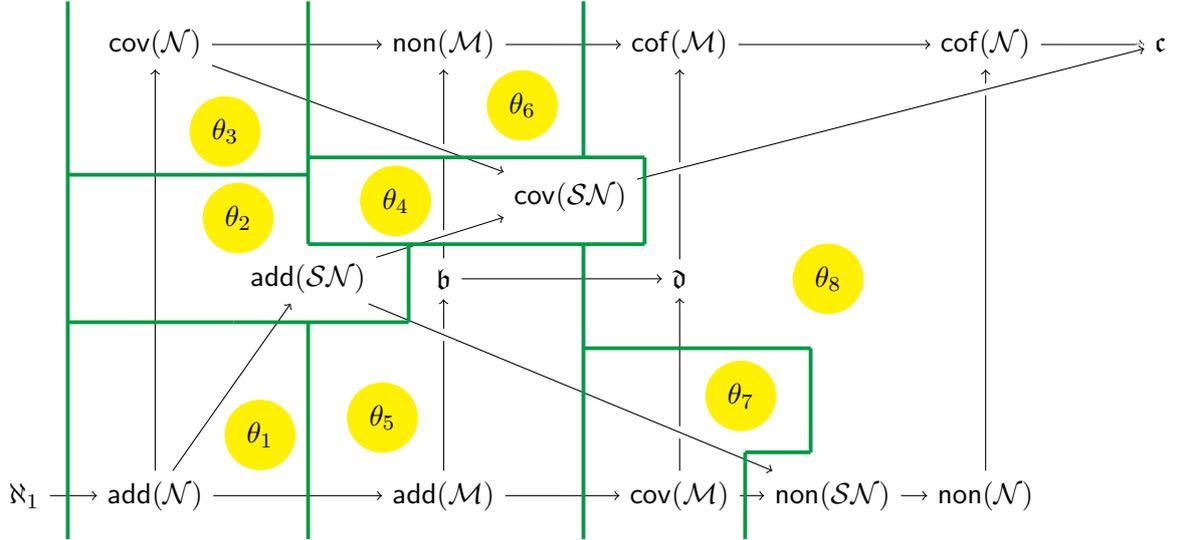
\begin{figure}[ht!]
\centering
\begin{tikzpicture}[scale=1.15]
\small{
\node (aleph1) at (-1,2.3) {$\aleph_1$};
\node (addn) at (0.5,2.3){$\add(\Nwf)$};
\node (covn) at (0.5,7.5){$\cov(\Nwf)$};
\node (nonn) at (10,2.3) {$\non(\Nwf)$} ;
\node (cfn) at (10,7.5) {$\cof(\Nwf)$} ;
\node (addm) at (3.8,2.3) {$\add(\Mwf)$} ;
\node (covm) at (6.5,2.3) {$\cov(\Mwf)$} ;
\node (nonm) at (3.8,7.5) {$\non(\Mwf)$} ;
\node (cfm) at (6.5,7.5) {$\cof(\Mwf)$} ;
\node (b) at (3.8,4.8) {$\bfrak$};
\node (d) at (6.5,4.8) {$\dfrak$};
\node (c) at (12,7.5) {$\cfrak$};
\node (covsn) at (5.25,5.75) {$\cov(\SNwf)$};
\node (addsn) at (2.25,4.8) {$\add(\SNwf)$};

\node (nonsn) at (8.25,2.3) {$\non(\SNwf)$} ;

\draw (aleph1) edge[->] (addn)
      (addn) edge[->] (covn)
      (covn) edge [->] (nonm)
      (nonm)edge [->] (cfm)
      (cfm)edge [->] (cfn)
      (cfn) edge[->] (c);

\draw   (covm) edge [->]  (nonsn)
(nonsn) edge [->]  (nonn)
(addm) edge [->]  (covm)
(addn) edge [->]  (addm)
(addn) edge [->]  (addsn)
(nonn) edge [->]  (cfn)
 (addm) edge [->] (b)
      (b)  edge [->] (nonm);
\draw (covm) edge [->] (d)
      (d)  edge[->] (cfm);
\draw (b) edge [->] (d);

\draw   (addsn) edge [line width=.15cm,white,-]  (covsn)
(addsn) edge [->]  (covsn);
\draw   (covsn) edge [line width=.15cm,white,-]  (c)
(covsn) edge [->]  (c);
\draw   (addsn) edge [line width=.15cm,white,-]  (nonsn)
(addsn) edge [->]  (nonsn);
(covsn) edge [->]  (c);

\draw   (covn) edge [line width=.15cm,white,-]  (covsn)
(covn) edge [->]  (covsn);
\draw[color=sug,line width=.05cm] (-0.5,1.8)--(-0.5,8);
\draw[color=sug,line width=.05cm] (-0.5,4.3)--(1.4,4.3);
\draw[color=sug,line width=.05cm] (2.25,1.8)--(2.25,4.3);
\draw[color=sug,line width=.05cm] (1.4,4.3)--(3.4,4.3);
\draw[color=sug,line width=.05cm] (3.4,4.3)--(3.4,5.2);
\draw[color=sug,line width=.05cm] (2.25,5.2)--(6.1,5.2);
\draw[color=sug,line width=.05cm] (2.25,5.2)--(2.25,8);
\draw[color=sug,line width=.05cm] (5.4,6.2)--(5.4,8);
\draw[color=sug,line width=.05cm] (-0.5,6)--(2.25,6);
\draw[color=sug,line width=.05cm] (2.25,6.2)--(6.1,6.2);
\draw[color=sug,line width=.05cm] (6.1,5.2)--(6.1,6.2);
\draw[color=sug,line width=.05cm] (5.4,1.8)--(5.4,5.2);
\draw[color=sug,line width=.05cm] (5.4,4)--(8,4);
\draw[color=sug,line width=.05cm] (8,2.8)--(8,4);
\draw[color=sug,line width=.05cm] (7.25,2.8)--(8,2.8);
\draw[color=sug,line width=.05cm] (7.25,1.8)--(7.25,2.8);
\draw[circle, fill=yellow,color=yellow] (1.7,3) circle (0.4);
\draw[circle, fill=yellow,color=yellow] (3.1,3.2) circle (0.4);
\draw[circle, fill=yellow,color=yellow] (3.25,5.7) circle (0.4);
\draw[circle, fill=yellow,color=yellow] (1.45,5.5) circle (0.4);
\draw[circle, fill=yellow,color=yellow] (1.3,6.5) circle (0.4);
\draw[circle, fill=yellow,color=yellow] (4.7,6.8) circle (0.4);
\draw[circle, fill=yellow,color=yellow] (7.2,3.45) circle (0.4);
\draw[circle, fill=yellow,color=yellow] (8.2,4.8) circle (0.4);
\node at (1.7,3) {$\theta_1$};
\node at (3.1,3.2) {$\theta_5$};
\node at (3.25,5.7) {$\theta_4$};
\node at (1.3,6.5) {$\theta_3$};
\node at (1.45,5.5) {$\theta_2$};
\node at (4.7,6.8) {$\theta_6$};
\node at (8.2,4.8) {$\theta_8$};
\node at (7.2,3.45) {$\theta_7$};
}

\end{tikzpicture}
\caption{Constellation forced in~\autoref{Thm:a2}.}
\label{Fig:addn1}
\end{figure}
\end{teorema}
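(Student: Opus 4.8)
The plan is to obtain the model by a finite support \emph{uf-extendable matrix iteration}, following the same general construction as for \autoref{Thm:a0}: the constellation of \autoref{Fig:addn1} differs from that of \autoref{Fig:addn0} only in that the seventh value is split off from the continuum ($\cov(\Mwf)=\theta_7<\cfrak=\theta_8$), so almost all of the work is already present in the proof of \autoref{Thm:a0}, with one extra layer of control over the ``Cohen side'' of the diagram.

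\textbf{The construction.} Start in a model of $\thch$ (or of $\thzfc$ realizing the arithmetic hypotheses on the $\theta_i$). Build a $\sqsubset$-increasing system $\seq{\Pbb_{i,\xi}}{i\le\gamma,\ \xi\le\theta_8}$ of ccc finite-support iterations whose vertical coordinates $i=\theta_1,\dots,\theta_5$ carry the intermediate models that will witness the five left-hand values; the hypothesis $\cof([\theta_8]^{<\theta_i})=\theta_8$ for $i\le5$ is exactly what makes the bookkeeping work, letting us enumerate, at the appropriate vertical levels and cofinally in $\theta_8$, all names for the iterands, which are of three kinds: (i) $\sigma$-$\bar\rho$-linked subalgebras of random forcing, aiming at $\cov(\Nwf)=\theta_3$; (ii) the $\sigma$-$n$-linked posets $\Qor_f$ of \autoref{c5} — which are $\sigma$-$\bar\rho$-linked, as shown in \autoref{sec:s4} — for the relevant $f\in\baireincr$, aiming at $\add(\SNwf)=\theta_2$ and $\cov(\SNwf)=\theta_4$; and (iii) Hechler-type $\sigma$-centered posets for $\bfrak=\add(\Mwf)=\theta_5$ and the top values; amoeba forcing is never used. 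Each iterand, being $\sigma$-$\bar\rho$-linked, is $\Lc^*(\Hwf_{\bar\rho,h})$-good by \autoref{Thm:a4}, and this goodness is preserved by finite support ccc iterations (and, one checks, by the ultrafilter limits of the uf-extendable iteration); hence $\Pbb_{\gamma,\theta_8}$ is $\Lc^*(\Hwf_{\bar\rho,h})$-good, which is what pins $\add(\Nwf)$ at $\theta_1$ from above.

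\textbf{Reading off the values.} The matrix structure yields the lower bounds $\add(\Nwf)\ge\theta_1$, $\add(\SNwf)\ge\theta_2$, $\cov(\Nwf)\ge\theta_3$, $\cov(\SNwf)\ge\theta_4$, $\bfrak\ge\theta_5$ (goodness of the tail, together with a witnessing family living in the first $\theta_i$-many columns), and the $\LCU$/$\COB$-type lemmas give the matching upper bounds; for the two $\SNwf$-values one uses \autoref{charSN} and the combinatorics of the Yorioka ideals $\Iwf_f$. The remaining (right-hand) values, including $\dfrak$, $\non(\SNwf)$, $\non(\Nwf)$, $\cof(\Nwf)$ and $\cfrak=\theta_8$, come out exactly as in \autoref{Thm:a0}. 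The one genuinely new point is keeping $\cov(\Mwf)=\theta_7<\theta_8$ while still having $\dfrak=\cfrak=\theta_8$: this is the job of the ``uf'' part of the iteration, the ultrafilter limits being set up so that Cohen-generic behaviour is not produced cofinally over the whole matrix, so that $\cov(\Mwf)$ cannot climb past $\theta_7$.

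\textbf{Main obstacle.} The delicate part is making the ultrafilter-limit machinery coexist with the $\sigma$-$\bar\rho$-linked/$\Lc^*$-good framework. One has to check: that the ultrafilter limits (suitably restricted, as $\theta_7$ moves with the iteration) preserve $\Lc^*(\Hwf_{\bar\rho,h})$-goodness, so that $\add(\Nwf)=\theta_1$ is not lost; and that the non-classical iterands $\Qor_f$ can be slotted into a uf-extendable matrix iteration at the right vertical levels — that is, that $\Qor_f$ is ``uf-extendable'' in the required sense and that its linked structure is compatible with the ultrafilter limits. The inspiration for the goodness half is \cite[Lem.~5.14]{BrM} together with the explicit form of $\Hwf_{\bar\rho,h}$ produced by \autoref{Thm:a4}. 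Once these compatibility points are settled, the remaining cardinal computations are a lengthy but essentially routine adaptation of those already carried out for \autoref{Thm:a0} and in \cite{BCM2}.
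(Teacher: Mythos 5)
Your overall strategy --- rerun the ${<}\theta_5$-uf-extendable matrix iteration of \autoref{Thm:a0} with the length stretched to cardinality $\theta_8$, the hypotheses $\cof([\theta_8]^{<\theta_i})=\theta_8$ making the bookkeeping catch all small names --- is exactly what the paper does in \autoref{appl:II}: keep the height of the matrix equal to $\theta_7$, replace the block length by $\theta_8$ (so $\eta_\rho:=\theta_8\rho$ for $\rho<\theta_7\theta_6$) and re-index the bookkeeping bijection over $\theta_8$. However, the step you single out as ``the one genuinely new point'' is attributed to the wrong mechanism, and as stated it would not work. The ultrafilter limits play no role in keeping $\cov(\Mwf)$ at $\theta_7$: uf-linkedness enters only through \autoref{mainpres}, to guarantee that the Cohen reals added in the first $\theta_7$ steps stay unbounded, i.e.\ to force $\Cbf_{[\theta_7]^{<\theta_5}}\leqT\baire$ and hence $\bfrak\leq\theta_5$. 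Cohen reals \emph{are} added cofinally in the iteration (at every limit stage), and no choice of ultrafilters suppresses this. What actually bounds $\cov(\Mwf)$ by $\theta_7$ is the matrix geometry: the height is $\theta_7$, the iterands $\Qnm_{\eta_\rho}=\Eor^{V_{t(\rho),\eta_\rho}}$ are restricted to the columns $t(\rho)<\theta_7$, and by \autoref{realint} every real of the final model already lives in some $V_{\alpha,\xi}$ with $\alpha<\theta_7$; the resulting family of restricted eventually different reals witnesses $\Ed\leqT\theta_7\times\theta_6$, whence $\cov(\Mwf)=\dfrak(\Ed)\leq\theta_7$, exactly as in \autoref{appl:I}. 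So the separation $\cov(\Mwf)=\theta_7<\cfrak=\theta_8$ costs nothing beyond lengthening the iteration, and your write-up is missing the actual argument for the upper bound on $\cov(\Mwf)$ while substituting a mechanism that cannot deliver it.

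Two smaller points. First, your ``main obstacle'' is not an obstacle in this framework: $\Lc^*(\Hwf_{\bar\rho,h})$-goodness is a property of each iterand separately and is preserved through FS iterations by \autoref{b8}, independently of the ultrafilter bookkeeping, and $\Qor_f$ is already known to be $\sigma$-uf-linked (\autoref{exm:ufl}), so there is no compatibility left to check. Second, your inventory of iterands is off: $\cov(\SNwf)=\theta_4$ is obtained from the restricted $\Eor_b$ iterands via \autoref{c8} (lower bound) and the precaliber-$\theta_4$ argument of \autoref{b11} (upper bound), not from $\Qor_f$, and you omit the restricted localization posets $\Loc^N$ needed to force $\add(\Nwf)\geq\theta_1$.
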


\begin{teorema}[\autoref{appl:III}]\label{Thm:a1}
Let $\theta_1\leq\theta_2\leq\theta_3\leq\theta_4$ be uncountable regular cardinals, and $\theta_5$ a cardinal such that $\theta_5\geq\theta_4$ and\/ $\cof([\theta_5]^{<\theta_i})=\theta_5$ for $i=1,2$ and $\theta_5 = \theta_5^{\aleph_0}$. Then we can construct a~ccc poset that forces~\autoref{Fig:addn}.

\begin{figure}[ht!]
\centering
\begin{tikzpicture}[scale=1.15]
\small{
\node (aleph1) at (-1,2.3) {$\aleph_1$};
\node (addn) at (0.5,2.3){$\add(\Nwf)$};
\node (covn) at (0.5,7.5){$\cov(\Nwf)$};
\node (nonn) at (10,2.3) {$\non(\Nwf)$} ;
\node (cfn) at (10,7.5) {$\cof(\Nwf)$} ;5
\node (addm) at (3.8,2.3) {$\add(\Mwf)$} ;
\node (covm) at (6.5,2.3) {$\cov(\Mwf)$} ;
\node (nonm) at (3.8,7.5) {$\non(\Mwf)$} ;
\node (cfm) at (6.5,7.5) {$\cof(\Mwf)$} ;
\node (b) at (3.8,4.8) {$\bfrak$};
\node (d) at (6.5,4.8) {$\dfrak$};
\node (c) at (12,7.5) {$\cfrak$};
\node (covsn) at (5.25,5.75) {$\cov(\SNwf)$};
\node (addsn) at (2.25,4.8) {$\add(\SNwf)$};

\node (nonsn) at (8.25,2.3) {$\non(\SNwf)$} ;

\draw (aleph1) edge[->] (addn)
      (addn) edge[->] (covn)
      (covn) edge [->] (nonm)
      (nonm)edge [->] (cfm)
      (cfm)edge [->] (cfn)
      (cfn) edge[->] (c);

\draw   (covm) edge [->]  (nonsn)
(nonsn) edge [->]  (nonn)
(addm) edge [->]  (covm)
(addn) edge [->]  (addm)
(addn) edge [->]  (addsn)
(nonn) edge [->]  (cfn)
 (addm) edge [->] (b)
      (b)  edge [->] (nonm);
\draw (covm) edge [->] (d)
      (d)  edge[->] (cfm);
\draw (b) edge [->] (d);

\draw   (addsn) edge [line width=.15cm,white,-]  (covsn)
(addsn) edge [->]  (covsn);
\draw   (covsn) edge [line width=.15cm,white,-]  (c)
(covsn) edge [->]  (c);
\draw   (addsn) edge [line width=.15cm,white,-]  (nonsn)
(addsn) edge [->]  (nonsn);
(covsn) edge [->]  (c);

\draw   (covn) edge [line width=.15cm,white,-]  (covsn)
(covn) edge [->]  (covsn);
\draw[color=sug,line width=.05cm] (-0.5,1.8)--(-0.5,8);
\draw[color=sug,line width=.05cm] (-0.5,4.3)--(1.4,4.3);
\draw[color=sug,line width=.05cm] (2.25,1.8)--(2.25,4.3);
\draw[color=sug,line width=.05cm] (1.4,4.3)--(3.4,4.3);
\draw[color=sug,line width=.05cm] (3.4,4.3)--(3.4,5.2);
\draw[color=sug,line width=.05cm] (3.4,5.2)--(6.1,5.2);
\draw[color=sug,line width=.05cm] (5.4,6.2)--(5.4,8);
\draw[color=sug,line width=.05cm] (5.4,6.2)--(6.1,6.2);
\draw[color=sug,line width=.05cm] (6.1,5.2)--(6.1,6.2);
\draw[color=sug,line width=.05cm] (5.4,1.8)--(5.4,5.2);
\draw[color=sug,line width=.05cm] (5.4,4)--(12,4);
\draw[circle, fill=yellow,color=yellow] (1.7,3) circle (0.4);
\draw[circle, fill=yellow,color=yellow] (3.1,3.2) circle (0.4);
\draw[circle, fill=yellow,color=yellow] (2.25,5.8) circle (0.4);
\draw[circle, fill=yellow,color=yellow] (8.2,3.2) circle (0.4);
\draw[circle, fill=yellow,color=yellow] (8.2,5.8) circle (0.4);
\node at (1.7,3) {$\theta_1$};
\node at (3.1,3.2) {$\theta_2$};
\node at (2.25,5.8) {$\theta_3$};
\node at (8.2,3.2) {$\theta_4$};
\node at (8.2,5.8) {$\theta_5$};
}

\end{tikzpicture}
\caption{Constellation forced in~\autoref{Thm:a1}.}
\label{Fig:addn}
\end{figure}
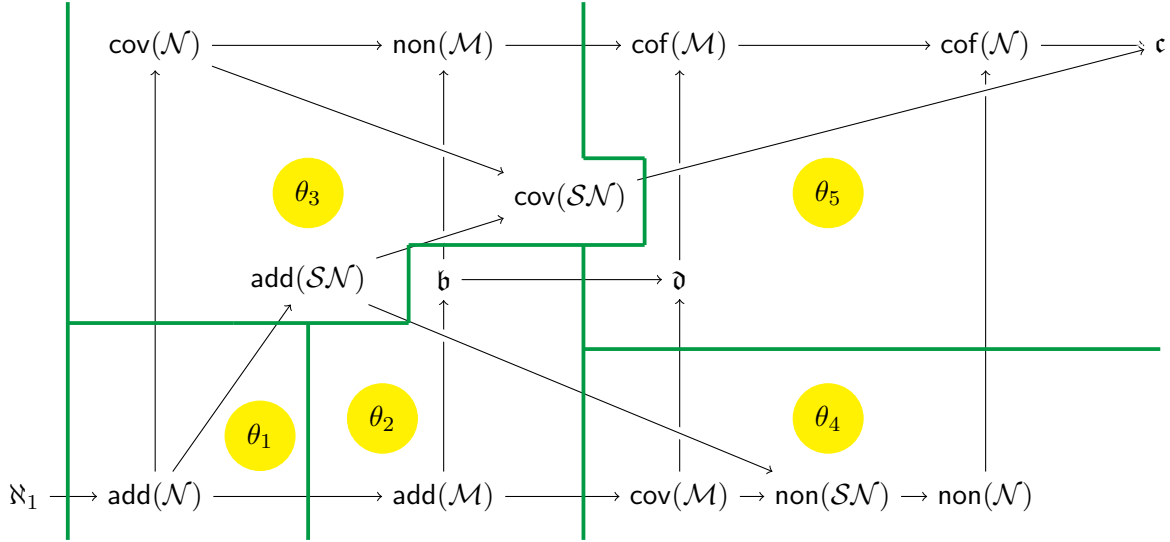
\end{teorema}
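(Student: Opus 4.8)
The plan is to prove this as an application of the general construction theorem for $\uf$-extendable matrix iterations established in \autoref{appl:III}: one builds a two-dimensional finite support matrix iteration, so the task reduces to choosing the iterands, verifying the preservation hypotheses, and reading off the constellation of \autoref{Fig:addn}, the crucial new input being that $\Qor_f$ and random forcing are $\sigma$-$\bar\rho$-linked, hence $\Lc^*(\Hwf)$-good by \autoref{Thm:a4}.

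First I would fix a ground model of $\mathsf{GCH}$ and build a ccc matrix iteration whose horizontal length is $\theta_5$ --- so that $\cfrak=\theta_5$ in the extension, using $\theta_5=\theta_5^{\aleph_0}$ --- with two vertical coordinates whose lengths are governed by $\theta_1$ and $\theta_2$; the hypotheses $\cof([\theta_5]^{<\theta_i})=\theta_5$ for $i=1,2$ are used precisely to fit a matrix of this size into a ccc iteration and to let a single bookkeeping function catch all relevant names. By the bookkeeping, the following posets are iterated cofinally along the appropriate columns: Hechler forcing $\Dor$ (to raise $\bfrak$ and $\add(\Mwf)$), random forcing $\Bor$ (to raise $\cov(\Nwf)$), the $\sigma$-$n$-linked poset $\Qor_f$ for increasing $f\in\baire$ (to raise $\add(\SNwf)$, using \autoref{charSN}), eventually different real forcing $\Eor$ (for $\non(\Mwf)$ and $\cov(\Mwf)$), and Cohen forcing in the bulk of the iteration (to lift $\dfrak$, $\non(\Nwf)$, $\non(\SNwf)$ and the cofinality numbers to their targets). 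The Cohen reals that arise at the successor steps of the two vertical coordinates furnish the $\LCU$/$\COB$ witnesses pinning $\add(\Nwf)$ to $\theta_1$ and $\add(\SNwf)$ to its target from above.

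For the upper bounds I would invoke the preservation machinery of the paper. The decisive point is that every iterand above is $\sigma$-$\bar\rho$-linked --- random forcing and $\Qor_f$ by the examples of \autoref{sec:s4}, Hechler and eventually different forcing because they are $\sigma$-centered, and Cohen forcing trivially --- hence all are $\Lc^*(\Hwf_{\bar\rho,h})$-good by \autoref{Thm:a4}; so the whole matrix iteration is $\Lc^*(\Hwf)$-good and preserves an $\LCU$ family of size $\theta_1$, which gives $\add(\Nwf)\le\theta_1$. The standard goodness of these same iterands for the relational systems controlling $\cov(\Nwf)$, $\non(\Nwf)$, $\cov(\Mwf)$, $\non(\Mwf)$, $\bfrak$ and $\dfrak$, together with the preservation results for the strong measure zero cardinals from \cite{BCM2}, caps the remaining characteristics at their targets; the matching $\ge$ inequalities follow from the iterands used cofinally, the vertical Cohen reals, and the ZFC arrows of \autoref{Cichonwith_SN}. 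Combining everything yields exactly \autoref{Fig:addn}.

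The hard part --- and the whole reason the new linked property is introduced --- is keeping $\add(\Nwf)$ as small as $\theta_1$ while still driving $\add(\SNwf)$ up past $\theta_1$. Since $\sigma$-centered forcings cannot raise $\add(\SNwf)$, the earlier $(\rho,\varrho)$-linked posets are likewise known not to raise it, and the only viable candidate, $\Qor_f$, is not known to be $(\rho,\varrho)$-linked, the entire argument hinges on the verification that $\Qor_f$ is $\sigma$-$\bar\rho$-linked and on feeding this into \autoref{Thm:a4}. A secondary technical obstacle is maintaining $\uf$-extendability of the matrix through limit stages while retaining all of the other ``goodness'' invariants, so that the several target values do not collide.
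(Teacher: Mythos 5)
You have the right engine---the whole point is that $\Qor_f$ (and $\Bor$) is $\sigma$-$\bar\rho$-linked, hence $\Lc^*(\Hwf_{\bar\rho,h})$-good by \autoref{d3}, so that a uf-extendable matrix iteration can raise $\add(\SNwf)$ and $\cov(\Nwf)$ while keeping $\add(\Nwf)$ down---but your construction does not actually force the lower bound for $\add(\Nwf)$ nor the value $\bfrak=\theta_2$. First, you include no localization forcing among the iterands and claim that the Cohen reals ``pin $\add(\Nwf)$ to $\theta_1$''. Cohen reals supply only the unbounded witnesses for the upper bound $\add(\Nwf)\leq\theta_1$ (via \autoref{b8}); the lower bound requires adding slaloms localizing every ${<}\theta_1$-sized family of ground-model reals, which the paper arranges by iterating $\Loc^N$ for transitive models $N$ with $|N|<\theta_1$ chosen by bookkeeping (item (F0) in the proof of \autoref{appl:III}). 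Without these iterands $\add(\Nwf)=\aleph_1$, so your argument fails whenever $\theta_1>\aleph_1$. Second, you iterate unrestricted Hechler forcing cofinally. Since the length of the iteration has cofinality $\theta_3$, this would force $\bfrak\geq\theta_3$, ruining $\bfrak=\theta_2$ when $\theta_2<\theta_3$; moreover full $\Dor$ adds a dominating real, hence is not ${<}\theta_2$-uf-linked, so the matrix would no longer be ${<}\theta_2$-uf-extendable and \autoref{mainpres} (the source of $\bfrak\leq\theta_2$) would not apply. The paper instead uses $\Dor^N$ restricted to models of size ${<}\theta_2$ (item (F1)).

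This reflects a structural misreading: $\theta_1$ and $\theta_2$ are not realized by ``two vertical coordinates'' of the matrix but by the sizes of the models to which $\Loc$ and $\Dor$ are restricted, combined with bookkeeping made possible by $\cof([\theta_5]^{<\theta_i})=\theta_5$ for $i=1,2$. The matrix has a single vertical coordinate, of height $\theta_4$ (preceded by $\theta_4$ many Cohen reals), which yields $\theta_4\leqT\Cbf_\Mwf$ and the block $\cov(\Mwf)=\non(\SNwf)=\non(\Nwf)=\theta_4$, while the length $\theta_4+\theta_5\theta_4\theta_3$, of cofinality $\theta_3$ and cardinality $\theta_5$, gives $\non(\Mwf)\leq\theta_3\leq\cov(\Mwf)$, $\cov(\SNwf)\leq\theta_3$ and $\cfrak=\theta_5$. (The forcing $\Eor$ you list is not used in this application; it is harmless but superfluous, since $\cov(\Mwf)\geq\theta_4$ already follows from the height of the matrix.) With $\Loc^N$ and $\Dor^N$ reinstated and the matrix shaped as above, the rest of your outline---goodness of all iterands for $\Lc^*(\Hwf_{\bar\rho,h})$ and the preservation theorems for $\SNwf$---does match the paper's argument.
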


The~\emph{ultrafilter extendable matrix iteration} (uf-extendable matrix iteration, see~\autoref{f4}) is a useful powerful method to separate the left-hand side of Cicho\'n's diagram from~\cite{BCM} by incorporating
$\cov(\Mwf)<\dfrak=\non(\Nwf)=\cfrak$. This involves building ultrafilters along a matrix iteration instead of constructing sequences of ultrafilters along a~FS iteration,  as applied by Goldstern, Mej\'ia, and the third author to ensure that restrictions of the eventually different real forcing do not add dominating reals to force the consistency of the left-hand side of Cicho\'n's diagram.

Since the establishment of ultrafilter extendable matrix iteration,  it has been applied several times to separate cardinals on the left side of Cicho\'n's diagram along with other cardinal characteristics.  See~\cite{Car23,BCM2,CMR2} for details.

We will employ the method of the uf-extendable matrix iteration to prove~\autoref{Thm:a0},~\ref{Thm:a2} and~\ref{Thm:a1}. Details are provided in~\autoref{sec:s5}.

\textit{This paper is structured as follows}. In~\autoref{sec:s1}, we review all the essentials related to relational systems and the Tukey order, forcing notions and their related to properties. We review in~\autoref{sec:s3} the preservation theory of unbounded families from~\cite{CM}, which is a generalization of Judah's and third author~\cite{JS} and Brendle~\cite{Br} preservation theory. \autoref{sec:s4} is devoted to our new linkedness property and to prove~\autoref{Thm:a4}. Using~\autoref{d3}, in \autoref{sec:s5}, we prove~\autoref{Thm:a0},~\ref{Thm:a2} and~\ref{Thm:a1}. Lastly,  discussions and open questions are presented in \autoref{sec:s6}.

\section{Preliminaries}\label{sec:s1}

Most of our notation is standard and compatible with classical textbooks on Set Theory (like~\cite{kunen80,Ke2,Je2,kunen}).

We say that $\Rbf=\la X, Y, {\sqsubset}\ra$ is a~\textit{relational system} if it consists of two non-empty sets $X$ and $Y$ and a~relation~$\sqsubset$.
\begin{enumerate}[label=(\arabic*)]
\item A~set $F\subseteq X$ is \emph{$\Rbf$-bounded} if $\exists y\in Y\,\forall x\in F\,\colon x \sqsubset y$.
\item A~set $D\subseteq Y$ is \emph{$\Rbf$-dominating} if $\forall x\in X\,\exists y\in D\,\colon x \sqsubset y$.
\end{enumerate}

We associate two cardinal characteristics with this relational system $R$:
\begin{align*}
\bfrak(\Rbf)&:=\min\set{|F|}{\text{$F\subseteq X$ is $\Rbf$-unbounded}},
&&\text{the \emph{unbounding number of\/ $R$}, and}\\
\dfrak(\Rbf)&:=\min\set{|D|}{\text{$D\subseteq Y$ is $\Rbf$-dominating}},
&&\text{the \emph{dominating number of\/ $R$}.}
\end{align*}
The dual of $\Rbf$ is defined by $\Rbf^\perp:=\la Y, X, {\sqsubset^\perp}\ra$ where $y\sqsubset^\perp x$ iff $x\not\sqsubset y$. Note that $\bfrak(\Rbf^\perp)=\dfrak(\Rbf)$ and $\dfrak(R^\perp)=\bfrak(\Rbf)$.

The cardinal $\bfrak(\Rbf)$ may be undefined, in which case we write $\bfrak(\Rbf) = \infty$, as well as for~$\dfrak(\Rbf)$. Concretely, $\bfrak(\Rbf) = \infty$ iff $\dfrak(\Rbf) =1$; and $\dfrak(\Rbf)= \infty$ iff $\bfrak(\Rbf) =1$.

As in~\cite{GKMS,CM22}, we also look at relational systems given by directed preorders.
\begin{definition}\label{examSdir}
We say that $\la S,{\leq_S}\ra$ is a \emph{directed preorder} if it is a preorder (that is, $\leq_S$~is a~reflexive and transitive relation in $S$) such that
\[\forall\, x, y\in S\ \exists\, z\in S\colon x\leq_S z\text{ and }y\leq_S z.\]
A directed preorder $\la S,{\leq_S}\ra$ is seen as the relational system $S=\la S, S,{\leq_S}\ra$, and their associated cardinal characteristics are indicated by $\bfrak(S)$ and $\dfrak(S)$. The cardinal $\dfrak(S)$ is actually the \emph{cofinality of $S$}, typically denoted by $\cof(S)$ or $\cf(S)$.
\end{definition}

Note that $\leq^*$ is a directed preorder on $\baire$, where $x\leq^* y$ means $\forall^\infty n<\omega\colon x(n)\leq y(n)$. We think of $\baire$ as the relational system with the relation $\leq^*$. Then $\bfrak:=\bfrak(\baire)$ and $\dfrak:=\dfrak(\baire)$.

Relational systems can also characterize the cardinal characteristics associated with an ideal.

\begin{example}\label{exm:Iwf}
For $\Iwf\subseteq\pts(X)$ define the relational systems:
\begin{enumerate}[label=\rm(\arabic*)]
\item $\Iwf:=\la\Iwf,\Iwf,{\subseteq}\ra$, which is a~directed partial order when $\Iwf$ is closed under unions (e.g.\ an ideal).
\item $\Cbf_\Iwf:=\la X,\Iwf,{\in}\ra$.
\end{enumerate}
Whenever $\Iwf$ is an ideal on $X$,
\begin{tasks}
[label-format={\normalfont},label={(\alph*)},label-align=right,
item-indent=38pt,label-width=20pt,label-offset=5pt,
after-item-skip=6pt](2)
\task$\bfrak(\Iwf)=\add(\Iwf)$. 
\task$\dfrak(\Iwf)=\cof(\Iwf)$. 
\task$\bfrak(\Cbf_\Iwf)=\non(\Iwf)$. 
\task$\dfrak(\Cbf_\Iwf)=\cov(\Iwf)$. 
\end{tasks}
\end{example}

The Tukey connection is a useful tool for establishing relationships between cardinal characteristics. Let $\Rbf=\la X,Y,{\sqsubset}\ra$ and $\Rbf'=\la X',Y',{\sqsubset'}\ra$ be two relational systems. We say that
$(\Psi_-,\Psi_+)\colon \Rbf\to \Rbf'$
is a~\emph{Tukey connection from $\Rbf$ into $\Rbf'$} if
 $\Psi_-\colon X\to X'$ and $\Psi_+\colon Y'\to Y$ are functions such that
 \[
 \forall x\in X\,\forall y'\in Y'\colon
 \Psi_-(x) \sqsubset' y' \Rightarrow x \sqsubset \Psi_+(y').
 \]
The \emph{Tukey order} between relational systems is defined by
$\Rbf\leqT \Rbf'$ iff there is a~Tukey connection from $\Rbf$ into $\Rbf'$. \emph{Tukey equivalence} is defined by $\Rbf\eqT \Rbf'$ iff $\Rbf\leqT \Rbf'$ and $\Rbf'\leqT \Rbf$. Notice that $\Rbf\leqT \Rbf'$ implies $(\Rbf')^\perp\leqT \Rbf^\perp$, $\dfrak(\Rbf)\leq\dfrak(\Rbf')$ and $\bfrak(\Rbf')\leq\bfrak(\Rbf)$.

In relation to the relational systems $\Cbf_{\Mwf}$ and $\Cbf_\SNwf$ and their cardinal characteristics, we introduce the anti-localization relational systems and their cardinals.

\begin{definition}
 Let $b$ be a function with domain $\omega$ such that $b(i)\neq\emptyset$ for all $i<\omega$, and let $h\in\baire$. Denote $\aLc(b,h):=\la\Scal(b,h),\prod b,{\not\ni^\infty}\ra$ the relational system where $x\in^\infty\varphi$ iff $\exists^\infty\, n<\omega\colon x(n)\in \varphi(n)$ ($\aLc$ stands for \emph{anti-localization}). The \emph{anti-localization cardinals} are defined by $\balc_{b,h}:=\bfrak(\aLc(b,h))$ and $\dalc_{b,h}:=\dfrak(\aLc(b,h))$.
\end{definition}

Recall the following characterization of the cardinal characteristics associated with $\Mwf$ whenever $h\geq^* 1$, $\balc_{\omega,h}=\non(\Mwf)$ and $\dalc_{\omega,h}=\cov(\Mwf)$ (see \autoref{b0}~\ref{b0:2}), where $\omega$~denotes the constant sequence $\omega$.


\begin{lemma}[{\cite[Lem.~2.5]{KM21}}]\label{c8}
$\Cbf_{\SNwf}^\perp \leqT \Cbf^\perp_{\Iwf_g} \leqT \aLc(b,1)$ for some $g\in\baireincr$. In particular, $\balc_{b,1}\leq\cov(\SNwf)$ and\/ $\non(\SNwf)\leq\balc_{b,1}$.
\end{lemma}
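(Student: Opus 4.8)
The plan is to establish the chain of Tukey connections from right to left, starting from the known characterization of the strong measure zero ideal via Yorioka ideals and then applying the anti-localization characterization of the Yorioka ideal covering/uniformity. Recall that $\SNwf = \bigcap_{f\in\baireincr} \Iwf_f$, so for any fixed $g\in\baireincr$ we have $\Iwf_g \supseteq \SNwf$. This inclusion means that the identity map $X = \cantor \to \cantor$ together with the identity on the ideals witnesses a Tukey connection $\Cbf_{\SNwf} \leqT \Cbf_{\Iwf_g}$: indeed if $x\notin A$ for $A\in \Iwf_g$ and we want to conclude $x\notin A$ for $A\in\SNwf$, the relevant direction is the dual one. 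More precisely, dualizing, $\Cbf_{\SNwf}^\perp = \la \SNwf, \cantor, \not\ni\ra$ and $\Cbf_{\Iwf_g}^\perp = \la \Iwf_g, \cantor, \not\ni\ra$; since $\SNwf\subseteq \Iwf_g$, the inclusion map $\SNwf\hookrightarrow \Iwf_g$ on the first coordinate and the identity on $\cantor$ on the second coordinate give $\Cbf_{\SNwf}^\perp \leqT \Cbf_{\Iwf_g}^\perp$, because $A\not\ni x$ in $\Iwf_g$ trivially implies $A\not\ni x$ in $\SNwf$ for $A$ in the smaller family. So the first Tukey connection holds for \emph{every} $g\in\baireincr$, and in particular for whatever $g$ we choose below.

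Next I would invoke the combinatorial characterization of the Yorioka ideal. The key point is that for a suitable $g\in\baireincr$ one has $\Cbf_{\Iwf_g}^\perp \leqT \aLc(b,1)$; this is essentially the content of the Kamo--Osuga / Yorioka analysis, where covering a set in $[\sigma]_\infty$ by slaloms is translated into an anti-localization statement. Concretely, an element $x\in\cantor$ gives rise (via coding on intervals determined by $g$) to a point $\Psi_-(x)\in\Scal(b,1)$ (a "slalom" of width $1$, i.e.\ essentially a function), and a function $\varphi\in\prod b$ is decoded into a sequence $\sigma\in(2^{<\omega})^\omega$ with $g\ll \hgt_\sigma$, generating a set $[\sigma]_\infty\in\Iwf_g$, so that $\Psi_-(x)\not\ni^\infty\varphi$ implies $x\notin[\sigma]_\infty$. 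Choosing $g$ with sufficiently fast growth (relative to a fixed $b$, or choosing $b$ appropriately) makes this work; this is exactly \cite[Lem.~2.5]{KM21}, which we are permitted to cite. Composing the two Tukey connections gives $\Cbf_{\SNwf}^\perp \leqT \Cbf_{\Iwf_g}^\perp \leqT \aLc(b,1)$.

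Finally, for the "in particular" clause I would just read off the inequalities from the general facts about the Tukey order recorded in the preliminaries: $\Rbf\leqT\Rbf'$ implies $\dfrak(\Rbf)\leq\dfrak(\Rbf')$ and $\bfrak(\Rbf')\leq\bfrak(\Rbf)$, and moreover $\bfrak(\Rbf^\perp)=\dfrak(\Rbf)$, $\dfrak(\Rbf^\perp)=\bfrak(\Rbf)$. Applying this to $\Cbf_{\SNwf}^\perp \leqT \aLc(b,1)$: taking $\dfrak$ of both sides and using \autoref{exm:Iwf}, $\dfrak(\Cbf_{\SNwf}^\perp) = \bfrak(\Cbf_{\SNwf}) = \non(\SNwf)$, while $\dfrak(\aLc(b,1)) = \dalc_{b,1}$, so $\non(\SNwf)\le \dalc_{b,1}$; wait—here one must be careful about which anti-localization cardinal appears. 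The relational system is $\aLc(b,1)=\la\Scal(b,1),\prod b,\not\ni^\infty\ra$, so $\bfrak(\aLc(b,1))=\balc_{b,1}$ and $\dfrak(\aLc(b,1))=\dalc_{b,1}$; from $\Cbf_{\SNwf}^\perp\leqT\aLc(b,1)$ we get $\bfrak(\aLc(b,1))\le \bfrak(\Cbf_{\SNwf}^\perp) = \dfrak(\Cbf_{\SNwf}) = \cov(\SNwf)$, i.e.\ $\balc_{b,1}\le\cov(\SNwf)$, and $\dfrak(\Cbf_{\SNwf}^\perp)=\non(\SNwf)\le\dfrak(\aLc(b,1))=\dalc_{b,1}$. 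The statement as written only records $\balc_{b,1}\le\cov(\SNwf)$ and $\non(\SNwf)\le\balc_{b,1}$, so presumably there is an additional ZFC inequality $\non(\SNwf)\le\balc_{b,1}$ coming from $\balc_{b,1}\le\dalc_{b,1}$ together with... actually the cleanest route for the second inequality is to also use $\Cbf_{\SNwf}\leqT \aLc(b,1)^\perp$ or to note the standard fact relating these; I would state it via the dual Tukey connection or simply cite that \cite{KM21} gives both inequalities directly. The main obstacle, then, is not the Tukey bookkeeping (which is routine given the preliminaries) but verifying—or properly attributing—the coding in the middle connection $\Cbf_{\Iwf_g}^\perp \leqT \aLc(b,1)$, i.e.\ pinning down the correct relationship between $b$ and $g$ so that the decoding of a width-one slalom produces a genuine member of $\Iwf_g$; since this is precisely \cite[Lem.~2.5]{KM21}, I would cite it rather than reprove it, and the proof reduces to assembling the two cited/observed Tukey connections and dualizing.
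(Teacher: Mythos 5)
The paper does not actually prove this lemma --- it is quoted directly from \cite[Lem.~2.5]{KM21} --- so your reduction of the only substantive step, the connection $\Cbf^\perp_{\Iwf_g}\leqT\aLc(b,1)$, to that citation is exactly the paper's treatment, and your direct verification of $\Cbf_{\SNwf}^\perp\leqT\Cbf^\perp_{\Iwf_g}$ from $\SNwf\subseteq\Iwf_g$ (inclusion on the first coordinate, identity on the second) is correct and holds for every $g\in\baireincr$. Your Tukey bookkeeping is also right, and the discrepancy you flag is genuine: from $\Cbf_{\SNwf}^\perp\leqT\aLc(b,1)$ one obtains $\balc_{b,1}=\bfrak(\aLc(b,1))\leq\bfrak(\Cbf_{\SNwf}^\perp)=\dfrak(\Cbf_{\SNwf})=\cov(\SNwf)$ and $\non(\SNwf)=\bfrak(\Cbf_{\SNwf})=\dfrak(\Cbf_{\SNwf}^\perp)\leq\dfrak(\aLc(b,1))=\dalc_{b,1}$, so the printed ``$\non(\SNwf)\leq\balc_{b,1}$'' does not follow from the displayed connections and should read $\non(\SNwf)\leq\dalc_{b,1}$ (the correct dual partner of $\balc_{b,1}\leq\cov(\SNwf)$); there is no hidden extra ZFC inequality to hunt for, as you tentatively suggest, and note also that the intended use of the lemma in this paper (iterating $\Eor_b$ to push $\cov(\SNwf)$ up) only requires the first inequality.
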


In the following, we review the elements and notation of forcing theory that we use throughout this text. We denote the ground model by $V$. For two forcing notions $\Por$ and~$\Qor$, we write $\Por\subsetdot\Qor$ when $\Por$ is a complete suborder of $\Qor$, i.e.\ the inclusion map from $\Por$ into $\Qor$ is a complete embedding. When $\seq{\Por_\alpha}{\alpha\leq\beta}$ is a $\subsetdot$-increasing sequence of forcing notions (like an iteration) and $G$ is a $\Por_\beta$-generic over $V$, we denote, for $\alpha\leq\beta$, $G_\alpha= \Por_\alpha\cap G$ and $V_\alpha  =V[G_\alpha]$. When $\Por_{\alpha+1}$ is obtained by a two-step iteration $\Por_\alpha\ast\Qnm_\alpha$, $G(\alpha)$ denotes the $\Qnm[G_\alpha]$ -generic set on $V_\alpha$ such that $V_{\alpha+1}= V_\alpha[G(\alpha)]$ (that is, $G_{\alpha+1} = G_\alpha\ast G(\alpha)$). We use $\Vdash_\alpha$ to denote the forcing relation for $\Por_\alpha$, and $\leq_\alpha$ to denote its preorder (although we use $\leq$ when clear from context).

Next, we recall the following stronger versions of the chain condition of a forcing notion.

\begin{definition}
Let $\Por$ be a forcing notion and $\nu$ an infinite cardinal.
\begin{enumerate}[label=\rm(\arabic*)]
\item \emph{$\Pbb$ has the $\nu$-cc (the $\nu$-chain condition)} if every antichain in $\Por$ has size ${<}\nu$. \textit{$\Por$ has the ccc (the countable chain contidion)} if it has the $\aleph_{1}$-cc.
\item For $n<\omega$ a~set $B\subseteq \Por$ is $n$-\textit{linked} if, for every $F\subseteq B$ of size $\leq n$, $\exists q\in \Por$ $\forall p\in F\colon q\leq p$. When $n=2$ we just write \emph{linked}.
\item $C\subseteq \Por$ is  \textit{centered} if it is $n$-linked for every $n<\omega$.
\item $\Por$ is $\nu$-$n$-\textit{linked} if $\Por=\bigcup_{\alpha<\nu}P_\alpha$ where each $P_{\alpha}$ is $n$-linked. When $\nu=\omega$, we say that $\Pbb$ is $\sigma$-$n$-\textit{linked}. In the case $n=2$, we just write \emph{$\nu$-linked} and \emph{$\sigma$-linked}.
\item $\Por$ is $\nu$-\textit{centered} if $\Por=\bigcup_{\alpha<\nu}P_\alpha$ where each $P_{\alpha}$ is centered. When $\nu=\omega$, we say that $\Por$ is $\sigma$-\textit{centered}.
\end{enumerate}
\end{definition}

The following linkedness property is a generalization of the notion of $\sigma$-linkedness, which is owing to Kamo and Osuga.

\begin{definition}[{\cite{KO}}]\label{link}
Let $\rho,\varrho\in\baire$. A forcing notion $\Pbb$ is \textit{$(\rho,\varrho)$-linked} if there exists a sequence $\seq{Q_{n,j}}{n<\omega,\ j<\rho(n)}$ of subsets of $\Pbb$ such that
\begin{enumerate}[label= \rm (\roman*)]
\item\label{it:rhopi1} $Q_{n,j}$ is $\varrho(n)$-linked for all $n<\omega$ and $j<\rho(n)$, and
\item\label{it:rhopi2} $\forall\, p\in \Pbb\ \forall^{\infty}\, n<\omega\ \exists\, j<\rho(n)\colon p\in Q_{n,j}$.
\end{enumerate}
\end{definition}

Recall the following forcing notions: In order to increase $\add(\SNwf)$ in our main results, we use the forcing notion introduced in~\cite{CMR2}, which is weakening of a forcing of Yorioka~\cite[Def.~4.1]{Yorioka}.

\begin{definition}[{\cite[Def.~3.23]{CMR2}}]\label{c5}
Let $f$ be an increasing function in $\baire$.
Define~$\Qor_{f}$ as the poset whose conditions are triples $(\sigma, N, F)$ such that $\sigma\in (2^{<\omega})^{<\omega}$, $N<\omega$ and $F\subseteq (2^{<\omega})^\omega$, satisfying the following requirements:
\begin{itemize}
\item $|\sigma(i)|=f(i)$ for all $i<|\sigma|$,

\item $|F|\leq N$ and $|\sigma|\leq N^2$, and 

\item $\forall\tau\in F\,\forall n<\omega\colon |\tau(n)|=f((n+1)^2)$.
\end{itemize}
We order $\Qor_{f}$ by $(\sigma', N', F')\leq(\sigma, N, F)$ iff $\sigma\subseteq\sigma'$, $N\leq N'$, $F\subseteq F'$ and \[\forall\tau\in F\, \forall i\in N'\smallsetminus N\, \exists n<|\sigma'|\colon \sigma'(n)\subseteq\tau(i).\]
\end{definition}

\begin{lemma}[{\cite[Lem.~3.24]{CMR2}}]\label{c6}
Let $f\in\baire$ be increasing.
\begin{enumerate}[label=\normalfont (\arabic*)]
\item\label{c6:0}
The set\/ $\set{(\sigma,N,F)\in\Qor_f}{|F|=N}$
is dense.
In fact, if $\sigma'\supseteq\sigma$ and $F'\supseteq F$ are such that\/
$|\sigma'|\leq N^2$ and\/ $|F'|\leq N$,
then\/ $(\sigma',N,F')\geq(\sigma,N,F)$.

\item\label{c6:1}
For $n<\omega$ the set\/
$\set{(\sigma,N,F)\in\Qor_f}{n<N}$ is dense.
Even more, if\/ $(\sigma,N,F)\in\Qor_f$ and $N'\geq N$, then there is some $\sigma'$ such that\/ $(\sigma',N',F)\leq(\sigma,N,F)$.

\item
For $\tau\in(2^{<\omega})^\omega$, if\/ $\forall i<\omega$
$|\tau(i)|=f((i+1)^2)$, then the set\/
$\set{(\sigma,N,F)\in\Qor_f}{\tau\in F}$ is dense.

\item
For $n<\omega$ the set\/
$\set{(\sigma,N,F)\in\Qor_f}{n<|\sigma|}$ is dense.
\item
$\set{(\sigma, N, F)}{|\sigma|=N^2\text{ and\/ }|F|^2\leq N}$
is dense.
\end{enumerate}
\end{lemma}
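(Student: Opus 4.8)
The plan is to reduce all five clauses to two elementary operations on conditions of $\Qor_f$ (see \autoref{c5}) and then combine them. \emph{Operation A (enlarging $\sigma$ and $F$ with $N$ fixed).} First I would record the following: if $(\sigma,N,F)\in\Qor_f$, if $\sigma'\supseteq\sigma$ has $|\sigma'(i)|=f(i)$ for all $i<|\sigma'|$, if $F'\supseteq F$ consists of sequences with the prescribed heights $|\tau(n)|=f((n+1)^2)$, and if moreover $|\sigma'|\leq N^2$ and $|F'|\leq N$, then $(\sigma',N,F')\in\Qor_f$ and $(\sigma',N,F')\leq(\sigma,N,F)$. The point is that the only non-obvious part of the order relation, namely $\forall\tau\in F\ \forall i\in N\smallsetminus N\ \exists n<|\sigma'|\colon\sigma'(n)\subseteq\tau(i)$, is vacuous since $N\smallsetminus N=\varnothing$. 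Because $f$ is increasing there are infinitely many $\tau\in(2^{<\omega})^\omega$ with the required heights, and new coordinates of $\sigma'$ may take any value in $2^{f(i)}$; so Operation A already delivers clause (1) (pad $F$ up to size $N$) together with its ``in fact'' part.

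\emph{Operation B (raising $N$).} Next I would verify: given $(\sigma,N,F)\in\Qor_f$ and $N'> N$, there is $\sigma'\supseteq\sigma$ with $|\sigma'|\leq(N')^2$, $|\sigma'(i)|=f(i)$ for all $i<|\sigma'|$, and $\forall\tau\in F\ \forall i\in[N,N')\ \exists m<|\sigma'|\colon\sigma'(m)\subseteq\tau(i)$ — which is exactly clause (2). I would enumerate the at most $|F|\cdot(N'-N)$ pairs $(\tau,i)$ with $\tau\in F$ and $i\in[N,N')$ in increasing order of $i$ and, for each such pair, reserve a fresh position $m$ with $|\sigma|\leq m\leq(i+1)^2$ and set $\sigma'(m):=\tau(i)\frestr f(m)$ there — a legitimate initial segment because $|\tau(i)|=f((i+1)^2)\geq f(m)$ as $f$ is increasing. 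Positions below $|\sigma|$ keep the values of $\sigma$, and any remaining positions below the final length are filled arbitrarily. One checks the reservation never fails: at the stage treating index $i$ the positions in $[|\sigma|,(i+1)^2]$ still free number at least $(i+1)^2+1-N^2-N(i-N)$, which is comfortably $\geq|F|$ for every $i\geq N$ (using $|\sigma|\leq N^2$ and $|F|\leq N$); a harmless restriction of the last index to positions below $(N')^2$ then keeps $|\sigma'|\leq(N')^2$.

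With Operations A and B in hand, the remaining clauses are just combination. For clause (3), given $\tau$ with the correct heights and $\tau\notin F$: apply B to push $N$ up to $N+1$ when $|F|=N$, then A to replace $F$ by $F\cup\{\tau\}$. For clause (4): apply B to reach some $N'$ with $(N')^2>n$, then A to stretch $\sigma$ to length $n+1\leq(N')^2$. For clause (5): apply B to reach $N'\geq\max\{N,|F|^2\}$ (note B leaves $F$ untouched), then A to stretch $\sigma$ to length exactly $(N')^2$; the outcome has $|\sigma|=(N')^2$ and $|F|^2\leq N'$ and extends the given condition.

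I expect the counting in Operation B to be the only real obstacle. One must place, for every pair $(\tau,i)$, a witness $\sigma'(m)=\tau(i)\frestr f(m)$ at a position $m\leq(i+1)^2$ — that bound being forced by $|\sigma'(m)|\leq|\tau(i)|$ — while respecting the bound $|\sigma'|\leq(N')^2$. This is precisely where the definition's use of the height $f((i+1)^2)$, rather than $f(i)$, for the members of $F$ pays off: it grants each coordinate $i$ a quadratically large window of admissible positions, and the constraints $|F|\leq N$ and $|\sigma|\leq N^2$ built into the notion of a condition are tuned so that $(N')^2-N^2\geq N(N'-N)$, which is exactly the slack one needs. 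Once Operation B is verified, everything else is bookkeeping.
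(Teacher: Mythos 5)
Your argument is correct: the paper itself imports this lemma from \cite[Lem.~3.24]{CMR2} without proof, and your two-operation decomposition is exactly the standard verification one would give, with the only nontrivial point being the counting in Operation~B, which checks out (at the stage for index $i\geq N$ one needs $(i+1)^2+1-N^2-N(i-N)=(i+1)^2-Ni+1\geq N$, equivalent to $(i+1)^2+1\geq N(i+1)$, which holds since $i+1>N$; and the final window $[N^2,(N')^2)$ leaves $N'(N'-N)+N\geq |F|$ free slots for the last index). Note only that the ``$\geq$'' in the statement of clause~(1) is the reversed convention for ``extends''; your reading $(\sigma',N,F')\leq(\sigma,N,F)$ is the one consistent with \autoref{c5}.
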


The poset $\Qor_{f}$ is $\sigma$-$k$-linked for any $k<\omega$, so it has the ccc property. Let $G$ be a $\Qor_{f}$-generic filter over $V$. In $V[G]$, define \[\sigma_{\gen}:=\bigcup\set{\sigma}{\exists (N, F)\colon (\sigma, N, F) \in G}.\]

Using~\autoref{c6}, we can show the following using standard genericity arguments:

\begin{fact}
\
\begin{enumerate}[beginpenalty=10000,label=\rm(\arabic*)]
\item $\sigma_{\gen}\in(2^{<\omega})^\omega$.
\item $\hgt_{\sigma_\gen} = f$.
\item for every $\tau\in (2^{<\omega})^\omega\cap V$, if $|\tau(i)|\geq f((i+1)^2)$ for all but finitely many $i<\omega$, then $[\tau]_\infty\subseteq\bigcup_{n<\omega}[\dot\sigma_\gen(n)]$.
\end{enumerate}
\end{fact}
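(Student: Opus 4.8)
The statement to prove is the Fact immediately following Lemma~\ref{c6}: given a $\Qor_f$-generic filter $G$ over $V$ and $\sigma_{\gen}:=\bigcup\{\sigma : \exists(N,F)\ (\sigma,N,F)\in G\}$, we want (1) $\sigma_{\gen}\in(2^{<\omega})^\omega$, (2) $\hgt_{\sigma_\gen}=f$, and (3) for every ground-model $\tau\in(2^{<\omega})^\omega$ with $|\tau(i)|\geq f((i+1)^2)$ for all but finitely many $i$, we have $[\tau]_\infty\subseteq\bigcup_{n<\omega}[\sigma_\gen(n)]$. All three parts are "standard genericity arguments" resting on the density lemmas in Lemma~\ref{c6}, so the plan is essentially to identify the right dense sets and chase compatibility.

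For (1): First I would check that $\sigma_\gen$ is a well-defined function, i.e.\ any two conditions in $G$ have compatible first coordinates. This is immediate since conditions in the same filter are compatible, and the ordering of $\Qor_f$ requires $\sigma\subseteq\sigma'$ whenever $(\sigma',N',F')\leq(\sigma,N,F)$; so the first coordinates of members of $G$ form a chain under $\subseteq$, hence their union is a function. That the domain of $\sigma_\gen$ is all of $\omega$ follows by genericity from Lemma~\ref{c6}(4): for each $n$, the set $\{(\sigma,N,F) : n<|\sigma|\}$ is dense, so $G$ meets it, forcing $|\sigma_\gen|>n$. Finally $\sigma_\gen(i)\in 2^{<\omega}$ for each $i$ by the first bullet in Definition~\ref{c5}, since any condition $(\sigma,N,F)\in G$ with $i<|\sigma|$ has $|\sigma(i)|=f(i)<\omega$, so indeed $\sigma_\gen\in(2^{<\omega})^\omega$.

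For (2): This is really part of (1): the first bullet of Definition~\ref{c5} says that for any condition $(\sigma,N,F)$ we have $|\sigma(i)|=f(i)$ for all $i<|\sigma|$. Since $\sigma_\gen(i)=\sigma(i)$ for some condition $(\sigma,N,F)\in G$ with $i<|\sigma|$, we get $\hgt_{\sigma_\gen}(i)=|\sigma_\gen(i)|=f(i)$ for every $i<\omega$, i.e.\ $\hgt_{\sigma_\gen}=f$.

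For (3): Fix $\tau\in(2^{<\omega})^\omega\cap V$ with $|\tau(i)|\geq f((i+1)^2)$ for all $i\geq i_0$. After adjusting $\tau$ on finitely many coordinates — replacing $\tau(i)$ for $i<i_0$ by any string of length $f((i+1)^2)$ (which changes $[\tau]_\infty$ only by a finite union of basic clopen sets, and one should note these get absorbed since $[\tau]_\infty$ is about infinitely many $n$) — and by truncating if $|\tau(i)|>f((i+1)^2)$, I may assume $|\tau(i)|=f((i+1)^2)$ for all $i$; let me call the adjusted sequence $\tau'$, noting $[\tau]_\infty\subseteq[\tau']_\infty\cup(\text{finite union of }[s])$, or more carefully argue directly with the tail. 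Now the key density fact is Lemma~\ref{c6}(3): $D_\tau:=\{(\sigma,N,F)\in\Qor_f : \tau'\in F\}$ is dense, so there is $(\sigma_*,N_*,F_*)\in G$ with $\tau'\in F_*$. The plan is then to show that for every $x\in[\tau']_\infty$, i.e.\ every $x$ with $\tau'(n)\subseteq x$ for infinitely many $n$, we have $x\in\bigcup_k[\sigma_\gen(k)]$. For this I use Lemma~\ref{c6}(1): the set $\{(\sigma,N,F)\in\Qor_f : |F|=N\}$ is dense and below $(\sigma_*,N_*,F_*)$; moreover using Lemma~\ref{c6}(2) I can find conditions in $G$ with arbitrarily large $N$. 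The crux is the definition of the order: if $(\sigma',N',F')\leq(\sigma,N,F)$ with $\tau'\in F\subseteq F'$, then $\forall i\in N'\setminus N\ \exists n<|\sigma'|\colon\sigma'(n)\subseteq\tau'(i)$. So as $G$ climbs through conditions with $N$ growing to $\omega$, for each $i<\omega$ there is eventually a condition $(\sigma',N',F')\in G$ with $i<N'$ and $\tau'\in F'$ and (from the previous condition's $N$) $\sigma'(n)\subseteq\tau'(i)$ for some $n<|\sigma'|$; hence $\sigma_\gen(n)=\sigma'(n)\subseteq\tau'(i)$. Thus the set $A:=\{i<\omega : \exists n\ \sigma_\gen(n)\subseteq\tau'(i)\}$ is cofinite (in fact equals $\omega$ past the point where $\tau'$ entered the side conditions). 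Now if $x\in[\tau']_\infty$ there are infinitely many $n$ with $\tau'(n)\subseteq x$; pick one such $n$ that also lies in $A$; then $\sigma_\gen(k)\subseteq\tau'(n)\subseteq x$ for the corresponding $k$, so $x\in[\sigma_\gen(k)]\subseteq\bigcup_{m}[\sigma_\gen(m)]$. Finally one checks the statement for the original $\tau$ rather than $\tau'$: $[\tau]_\infty$ and $[\tau']_\infty$ agree modulo finitely many coordinates, and any $x\in[\tau]_\infty$ that is not in $[\tau']_\infty$ witnesses "infinitely many $n$" using $n\geq i_0$, where $\tau(n)\supseteq\tau'(n)$ or vice versa after the length adjustment — this bookkeeping is routine.

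**Main obstacle.** The only genuinely non-formal point is part (3), and within it the argument that the "good set" $A$ is cofinite: one must track how the side condition $\tau'\in F$ propagates through the iteration and interacts with the clause $\forall i\in N'\setminus N\ \exists n<|\sigma'|\colon\sigma'(n)\subseteq\tau'(i)$ in the order. The subtlety is that this clause only guarantees the covering for the \emph{new} indices $i\in N'\setminus N$, so one needs genericity (arbitrarily large $N$ via Lemma~\ref{c6}(2)) to cover every $i$; and one must make sure $\tau'$ actually sits in the side condition by the time $i$ becomes "new," which is why we first meet $D_{\tau'}$. Everything else — well-definedness, the domain being $\omega$, the height equalling $f$ — is an immediate read-off from Definition~\ref{c5} and the density items, so I would present those in a sentence or two each and spend the bulk of the write-up on the bookkeeping for (3).
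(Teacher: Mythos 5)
Your proposal is correct and is precisely the ``standard genericity argument'' via the density sets of \autoref{c6} that the paper invokes without writing out: parts (1)--(2) from the density of $\set{(\sigma,N,F)}{n<|\sigma|}$ together with the height requirement in \autoref{c5}, and part (3) from the density of $\set{(\sigma,N,F)}{\tau'\in F}$ and $\set{(\sigma,N,F)}{n<N}$ combined with the covering clause in the order, comparing against the fixed condition at which $\tau'$ enters the side part. The truncation/adjustment of $\tau$ to $\tau'$ and the observation that $[\tau]_\infty\subseteq[\tau']_\infty$ up to finitely many irrelevant indices are handled correctly.
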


We below present a modification of a poset presented by Kamo and Osuga~\cite{KO} (see also~\cite{CM, BCM, BCM2}).

\begin{definition}[ED forcing]\label{c7}
Let $b=\seq{b(n)}{n<\omega}$ be a sequence of non-empty sets and $h\in\baire$ such that $\lim_{i\to\infty}\frac{h(i)}{|b(i)|}$ \sloppy $=0$ (when $b(i)$ is infinite, interpret $\frac{h(i)}{|b(i)|}$ as $0$).
Define the \emph{$(b,h)$-ED (eventually different real) forcing} $\Eor^h_b$ as the poset whose conditions are of the form $p=(s^p,\varphi^p)$ such that, for some $m:=m_{p}<\omega$,
\begin{enumerate}[label= \rm (\roman*)]
\item
$s^p\in\Seq_{<\omega}(b)$, $\varphi^p\in\Swf(b,m h)$, and
\item
$m h(i)<b(i)$ for every $i\geq|s^p|$,
\end{enumerate}
ordered by $(t,\psi)\leq(s,\varphi)$ iff $s\subseteq t$, $\forall\, i<\omega\colon \varphi(i)\subseteq\psi(i)$, and $t(i)\notin\varphi(i)$ for all $i\in|t|\menos|s|$.

When $G$ is $\Eor_{b}^{h}$-generic over $V$, we denote the generic real by $e_\gen:=\bigcup_{p\in G}s^p$, which we usually refer to as the \emph{eventually different generic real (over $V$)}. Denote $\Eor_{b}:=\Eor_b^1$ and $\Eor:=\Eor_\omega$. Thanks to~\autoref{c8}, we can use $\Eor_b$ to increase $\cov(\SNwf)$.
\end{definition}

It is widely recognized that $\Eor$ is $\sigma$-centered; however, the same does not hold for $\Eor^h_b$ when $b\in\baire$. Nonetheless, we have the following.

\begin{lemma}[{\cite[Lemma~2.21]{CM}}]\label{genlink}
Let $b,h\in\baire$ with $b\geq 1$. Let $\varrho,\rho\in\baire$ and assume that there is a non-decreasing function $f\in\baire$ going to infinity and an $m^*<\omega$ such that, for all but finitely many $k<\omega$,
\begin{enumerate}[label= \rm (\roman*)]
\item $k\varrho(k)h(i)<b(i)$ for all $i\geq f(k)$ and
\item $k\prod_{i=m^*}^{f(k)-1}((\min\{k,f(k)\}-1)h(i)+1)\leq\rho(k)$.
\end{enumerate}
Then\/ $\Eor^h_b$ is $(\rho,\varrho)$-linked.
\end{lemma}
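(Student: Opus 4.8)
The plan is to construct the witnessing family $\seq{Q_{n,j}}{n<\omega,\ j<\rho(n)}$ of \autoref{link} directly. For the finitely many $n$ at which (i) or (ii) fails I put $Q_{n,j}=\varnothing$ for all $j<\rho(n)$; the empty set is $\varrho(n)$-linked, and clause~\ref{it:rhopi2} of \autoref{link} only speaks of all but finitely many $n$. So fix a large $n$ and set $\ell:=\min\{n,f(n)\}-1$ (so $\ell\to\infty$ as $n\to\infty$). At this stage I will only sort the conditions $p\in\Eor^h_b$ satisfying (a) $|s^p|\le f(n)$, (b) $m_p\le\ell$, and (c) $s^p(i)\le\ell\, h(i)$ for all $i\in[m^*,|s^p|)$; since $\ell\to\infty$ while $|s^p|$ and the finitely many integers $s^p(i)$ are fixed for a given $p$ (we may harmlessly assume $h\ge 1$, folding finitely many exceptional coordinates into an enlarged $m^*$), every condition satisfies (a)--(c) for all large $n$, which secures clause~\ref{it:rhopi2}.

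The key device is a canonical stem extension. For $p$ as above, let $\hat s^p$ be the stem of length $f(n)$ extending $s^p$ via $\hat s^p(i):=\min\bigl(b(i)\smallsetminus\varphi^p(i)\bigr)$ for $i\in[|s^p|,f(n))$: this is well defined since $i\ge|s^p|$ forces $m_ph(i)<b(i)$, and by a pigeonhole $\hat s^p(i)\le|\varphi^p(i)|\le m_ph(i)\le\ell\,h(i)$. With (c) this gives $\hat s^p(i)\le\ell\,h(i)$ for every $i\in[m^*,f(n))$, and by construction $\hat p:=(\hat s^p,\varphi^p)\le p$ in $\Eor^h_b$ (the newly chosen stem entries dodge $\varphi^p$). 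I then declare $Q_{n,j}$, for $j$ coding the pair $\bigl(\hat s^p\frestr[0,m^*),\ \hat s^p\frestr[m^*,f(n))\bigr)$, to be the set of all such $p$ with that code. The first entry ranges over $\prod_{i<m^*}b(i)$, a fixed finite set; the second over $\prod_{i=m^*}^{f(n)-1}\{0,1,\dots,\ell\,h(i)\}$. Hence the number of nonempty pieces is at most $\bigl(\prod_{i<m^*}|b(i)|\bigr)\cdot\prod_{i=m^*}^{f(n)-1}(\ell\,h(i)+1)$, and since $n$ is large the leading constant is $\le n$, so (ii) bounds this by $\rho(n)$; pad with empty sets to obtain exactly $\rho(n)$ pieces.

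It remains to see that each $Q_{n,j}$ is $\varrho(n)$-linked. Take $p_1,\dots,p_r\in Q_{n,j}$ with $r\le\varrho(n)$. All of them have the same canonical stem $\hat s^{p_\iota}=:s^*$ of length $f(n)$, so $\hat p_\iota=(s^*,\varphi^{p_\iota})$. Put $q:=\bigl(s^*,\bigcup_{\iota\le r}\varphi^{p_\iota}\bigr)$. Its slalom has width $\le\sum_{\iota\le r}m_{p_\iota}h(i)\le\varrho(n)\,\ell\,h(i)$ at every coordinate, and by (i) applied with $k=n$ (and $\ell<n$) we get $\varrho(n)\,\ell\,h(i)<b(i)$ for all $i\ge f(n)=|s^*|$, so $q\in\Eor^h_b$. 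Moreover $q\le\hat p_\iota$ (identical stem, larger slalom, no new stem coordinates) and $\hat p_\iota\le p_\iota$, so $q$ is a common lower bound of $p_1,\dots,p_r$; hence $Q_{n,j}$ is $\varrho(n)$-linked, and $\Eor^h_b$ is $(\rho,\varrho)$-linked.

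The delicate point --- and the reason for the precise shape of the hypotheses --- is the behaviour on the coordinates $i\in[|s^p|,f(n))$: there $b(i)$ may exceed $m_ph(i)$ only barely, so one cannot merge $\varrho(n)$-many slaloms, but one can push these coordinates into the \emph{stem} of the common lower bound through the minimal avoiding value, and beyond $f(n)$ hypothesis (i) then supplies exactly the room needed to union the slaloms. Everything else is bookkeeping: restriction (c) is what confines the $[m^*,f(n))$-part of the (extended) stem to a set of size $\prod_{i=m^*}^{f(n)-1}(\ell\, h(i)+1)$, and the factor $k$ in front of that product in (ii) is there to absorb the fixed cost $\prod_{i<m^*}|b(i)|$ of the uncontrolled initial segment of the stem.
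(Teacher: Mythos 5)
Your construction is correct and is essentially the argument behind the cited result (the paper itself only quotes Lemma~2.21 of [CM] without proof): extend each stem canonically to length $f(n)$ by the minimal values avoiding the slalom, code conditions by the extended stem, whose entries on $[m^*,f(n))$ are bounded by $\ell h(i)$ with your $\ell=\min\{n,f(n)\}-1$ so that the count matches the product in (ii), and use (i) to union up to $\varrho(n)$ slaloms beyond coordinate $f(n)$. The only caveat is your reduction to $h\geq 1$: enlarging $m^*$ absorbs only finitely many coordinates with $h(i)=0$, so the argument really needs $h(i)\geq 1$ for all but finitely many $i$ (which is harmless here, since every application in this paper takes $h\equiv 1$).
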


\section{A quick revising of preservation theory}\label{sec:s3}

To make it easier for the reader, we will review the preservation properties introduced by Judah and the third author~\cite{JS} and Brendle~\cite{Br} for FS iterations of ccc posets, which were extended in~\cite[Sect.~4]{CM} by the first author and Mej\'ia. We examine new tools in~\cite{CarMej23,BCM2} for controlling the cardinal characteristics related to $\SNwf$ in forcing iterations. These characteristics will be utilized in demonstrating the consistency outcomes in~\autoref{sec:s5}

\begin{definition}\label{b1}
Let $\Rbf=\la X,Y,{\sqsubset}\ra$ be a relational system, and let $\theta$ be a cardinal.
\begin{enumerate}[label=\rm(\arabic*)]
\item Let $M$ be a~set.
\begin{enumerate}[label=\rm(\roman*)]
\item An object $y\in Y$ is \textit{$\Rbf$-dominating over $M$} if $x\sqsubset y$ for all $x\in X\cap M$.

\item An object $x\in X$ is \textit{$\Rbf$-unbounded over $M$} if it $\Rbf^\perp$-dominating over $M$, that is, $x\not\sqsubset y$ for all $y\in Y\cap M$.
\end{enumerate}

\item A family $\set{x_i}{i\in I}\subseteq X$ is \emph{strongly $\theta$-$R$-unbounded} if
$|I|\geq\theta$ and, for any $y\in Y$, $|\set{i\in I }{x_i\sqsubset y}|<\theta$.
\end{enumerate}
\end{definition}

In our forcing applications, we show that some cardinal characteristics have certain values (in a generic extension) by forcing a Tukey connection between their relational systems and some simple relational systems like $\Cbf_{[\lambda]^{<\theta}}$ and $[\lambda]^{<\theta}$ for some cardinals $\theta\leq\lambda$ with uncountable regular $\theta$.

For instance, if $\Rbf$ is a relational system and we force $\Rbf\eqT\Cbf_{[\lambda]^{<\theta}}$, then we obtain $\bfrak(\Rbf)=\non([\lambda]^{<\theta})=\theta$ and $\dfrak(\Rbf)=\cov([\lambda]^{<\theta})=\lambda$, the latter when either $\theta$ is regular or $\lambda>\theta$.
This discussion motivates the following characterizations of the Tukey order between $\Cbf_{[X]^{<\theta}}$ and other relational systems.

\begin{lemma}[{\cite[Lem.~1.16]{CM22}}]\label{b2}
Let\/ $\Rbf=\la X,Y,{\sqsubset}\ra$ be a relational system, $\theta$ be an infinite cardinal, and $I$ be a set of size ${\geq}\theta$.
\begin{enumerate}[label=\normalfont (\alph*)]
\item
$\Cbf_{[I]^{<\theta}}\leqT \Rbf$ iff there exists a strongly $\theta$-$\Rbf$-unbounded family\/ $\set{x_i}{i\in I}$.

\item
$\bfrak(\Rbf)\geq\theta$ iff\/ $\Rbf\leqT \Cbf_{[X]^{<\theta}}$.
\end{enumerate}
\end{lemma}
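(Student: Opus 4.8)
The plan is to prove both equivalences by directly unwinding the definition of a Tukey connection: in each part the substantive direction is an explicit construction of the pair of maps $(\Psi_-,\Psi_+)$, and the reverse direction is a routine extraction. For (a), to pass from a strongly $\theta$-$\Rbf$-unbounded family $\set{x_i}{i\in I}$ to a Tukey connection $(\Psi_-,\Psi_+)\colon\Cbf_{[I]^{<\theta}}\to\Rbf$, I would set $\Psi_-(i):=x_i$ for $i\in I$ and $\Psi_+(y):=\set{i\in I}{x_i\sqsubset y}$ for $y\in Y$. The hypothesis that the family is strongly $\theta$-$\Rbf$-unbounded says exactly that $|\Psi_+(y)|<\theta$ for every $y$, so $\Psi_+$ maps $Y$ into $[I]^{<\theta}$, while the Tukey requirement $\Psi_-(i)\sqsubset y\Rightarrow i\in\Psi_+(y)$ holds by construction. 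Conversely, given a Tukey connection $(\Psi_-,\Psi_+)\colon\Cbf_{[I]^{<\theta}}\to\Rbf$, set $x_i:=\Psi_-(i)$; since $|I|\ge\theta$ and, for every $y\in Y$, $\set{i\in I}{x_i\sqsubset y}\subseteq\Psi_+(y)\in[I]^{<\theta}$, the indexed family $\set{x_i}{i\in I}$ is strongly $\theta$-$\Rbf$-unbounded. Possible repetitions in this family are harmless, since the notion is formulated entirely in terms of the index set~$I$.

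For (b), I would take $\Psi_-:=\id_X$. Assuming $\bfrak(\Rbf)\ge\theta$, every $A\in[X]^{<\theta}$ has $|A|<\theta\le\bfrak(\Rbf)$ and is therefore $\Rbf$-bounded; choosing (by the axiom of choice) some $\Psi_+(A)\in Y$ with $x\sqsubset\Psi_+(A)$ for all $x\in A$, we get $x\in A\Rightarrow x\sqsubset\Psi_+(A)$, i.e.\ $(\id_X,\Psi_+)$ is a Tukey connection $\Rbf\to\Cbf_{[X]^{<\theta}}$. The same argument works when $\bfrak(\Rbf)=\infty$, as then every subset of $X$ is $\Rbf$-bounded. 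For the converse, $\Rbf\leqT\Cbf_{[X]^{<\theta}}$ gives $\bfrak(\Cbf_{[X]^{<\theta}})\le\bfrak(\Rbf)$ by the monotonicity of $\bfrak$ under $\leqT$ recorded above, and $\bfrak(\Cbf_{[X]^{<\theta}})\ge\theta$ because any $F\subseteq X$ with $|F|<\theta$ lies in $[X]^{<\theta}$ and is $\Cbf_{[X]^{<\theta}}$-bounded by $F$ itself, so no $\Cbf_{[X]^{<\theta}}$-unbounded set has size $<\theta$.

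I do not expect a genuine obstacle here; the proof is a definition chase. The points that require attention are purely organizational: keeping track of which of $\Psi_-,\Psi_+$ goes in which direction between the two relational systems, checking that $\Psi_+$ really lands in the ideal $[I]^{<\theta}$ (resp.\ $[X]^{<\theta}$), and handling the degenerate cases uniformly --- namely $\bfrak(\Rbf)=\infty$, and $|X|<\theta$ so that $[X]^{<\theta}=\pts(X)$ --- by reading the convention $\infty\ge\theta$ as true.
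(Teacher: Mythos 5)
Your proof is correct and is exactly the standard definition-unwinding argument; the paper itself gives no proof, citing \cite[Lem.~1.16]{CM22}, and your constructions of the Tukey connections (together with the observation that repetitions in the indexed family and the degenerate cases $\bfrak(\Rbf)=\infty$, $|X|<\theta$ cause no trouble) match what that reference does.
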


We look at the following types of well-defined relational systems.

\begin{definition}\label{b3}
Say that $\Rbf=\la X,Y,{\sqsubset}\ra$ is a \textit{Polish relational system (Prs)} if
\begin{enumerate}[label=\rm(\arabic*)]
\item\label{b3:1}
$X$ is a Perfect Polish space,
\item\label{b3:2}
$Y$ is a non-empty analytic subspace of some Polish $Z$, and
\item\label{b3:3}
${\sqsubset}=\bigcup_{n<\omega}{\sqsubset_{n}}$ where $\seq{{\sqsubset_{n}}}{n\in\omega}$ is some increasing sequence of closed subsets of $X\times Z$ such that, for any $n<\omega$ and for any $y\in Y$,
$({\sqsubset_{n}})^{y}=\set{x\in X}{x\sqsubset_{n}y}$ is closed nowhere dense.
\end{enumerate}
\end{definition}

\begin{remark}\label{b4}
By~\autoref{b3}~\ref{b3:3}, $\la X,\Mwf(X),{\in}\ra$ is Tukey below $\Rbf$ where $\Mwf(X)$ denotes the $\sigma$-ideal of meager subsets of $X$. Therefore, $\bfrak(\Rbf)\leq \non(\Mwf)$ and $\cov(\Mwf)\leq\dfrak(\Rbf)$.
\end{remark}

For the rest of this section, fix a Prs $\Rbf=\la X,Y,{\sqsubset}\ra$ and an infinite cardinal $\theta$.

\begin{definition}[Judah and Shelah {\cite{JS}}, Brendle~{\cite{Br}}]\label{b5}
A poset $\Por$ is \textit{$\theta$-$\Rbf$-good} if, for any $\Por$-name $\dot{h}$ for a member of $Y$, there is a nonempty set $H\subseteq Y$ (in the ground model) of size ${<}\theta$ such that, for any $x\in X$, if $x$ is $\Rbf$-unbounded over $H$ then $\Vdash x\not\sqsubset \dot{h}$.

We say that $\Por$ is \textit{$\Rbf$-good} if it is $\aleph_1$-$\Rbf$-good.
\end{definition}

The previous is a standard property associated with preserving $\bfrak(\Rbf)$ small and $\dfrak(\Rbf)$ large after forcing extensions.

\begin{remark}
Notice that $\theta<\theta_0$
implies that any $\theta$-$\Rbf$-good poset is $\theta_0$-$\Rbf$-good. Also, if $\Por \lessdot\Qor$ and $\Qor$ is $\theta$-$\Rbf$-good, then $\Por$ is $\theta$-$\Rbf$-good.
\end{remark}

\begin{lemma}[{\cite[Lemma~2.7]{CM}}]\label{b6}
Assume that $\theta$ is a regular cardinal. Then any poset of size ${<}\theta$
is $\theta$-$\Rbf$-good. In particular, Cohen forcing\/ $\Cor$ is\/ $\Rbf$-good.
\end{lemma}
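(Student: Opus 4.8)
The plan is to prove the first assertion directly from the definition of $\theta$-$\Rbf$-good (Definition \ref{b5}), using that a poset $\Por$ of size $<\theta$ has fewer than $\theta$ maximal antichains to worry about, and then to derive the second assertion about Cohen forcing as the special case $\theta=\aleph_1$ together with the observation that $\Cor$ is (forcing-equivalent to) a countable poset.

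First I would fix a poset $\Por$ with $|\Por|<\theta$ and a $\Por$-name $\dot h$ for a member of $Y$. For each condition $p\in\Por$, since $\Por\Vdash\dot h\in Y$ and $Y$ is analytic (in particular non-empty), I can pick a condition $q_p\leq p$ and an element $y_p\in Y$ such that $q_p\Vdash\dot h = \check y_p$ — here one uses that below any condition the name $\dot h$ can be decided into a ground-model element along a generic, or more carefully, that we may pass to a dense set of conditions deciding $\dot h$ to equal a specific $y\in Y$. Collecting these, set $H:=\set{y_p}{p\in\Por}$; then $H\subseteq Y$, $H$ is non-empty, and $|H|\leq|\Por|<\theta$, as required. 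Now suppose $x\in X$ is $\Rbf$-unbounded over $H$, i.e.\ $x\not\sqsubset y$ for all $y\in H$. I claim $\Por\Vdash x\not\sqsubset\dot h$: if not, some $p\in\Por$ forces $x\sqsubset\dot h$; but then $q_p\leq p$ forces both $x\sqsubset\dot h$ and $\dot h=\check y_p$, hence $x\sqsubset y_p$ with $y_p\in H$ — here I am using that the relation ${\sqsubset}=\bigcup_n{\sqsubset_n}$ is a Borel (indeed analytic) relation on $X\times Z$, so the statement $x\sqsubset\check y_p$ is absolute between $V$ and the generic extension — contradicting $\Rbf$-unboundedness of $x$ over $H$. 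This proves $\Por$ is $\theta$-$\Rbf$-good. For the ``in particular'' clause: Cohen forcing $\Cor$ is countable (or forcing-equivalent to a countable poset such as $2^{<\omega}$), hence has size $<\aleph_1$, so it is $\aleph_1$-$\Rbf$-good, i.e.\ $\Rbf$-good by the last sentence of Definition \ref{b5}.

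The main subtlety — and the step I would be most careful about — is the passage from ``$p\Vdash\dot h\in Y$'' to ``there is $q\leq p$ and $y\in Y$ with $q\Vdash\dot h=\check y$.'' This is not automatic for arbitrary names into an analytic set, but it holds here because $|\Por|<\theta$ and, more importantly, because the set of conditions deciding $\dot h$ up to a ground-model real is dense: the name $\dot h$ for an element of the Polish space $Z$ is coded by a name for an element of $\baire$ (or $\cantor$), and below any $p$ one can recursively build a decreasing sequence of conditions deciding longer and longer initial segments, whose ``limit value'' is a ground-model element $y\in Z$; since $p\Vdash\dot h\in Y$ and $Y$ is analytic (so membership in $Y$ is absolute via a witnessing tree), one gets $y\in Y$ as well, and by genericity some single condition $q\leq p$ forces $\dot h=\check y$ — strictly, one should instead enumerate a maximal antichain $A\subseteq\Por$ below $p$ of conditions each deciding $\dot h$ to be a fixed ground-model element, which exists by the ccc-style argument since $|\Por|<\theta$ and $\theta$ is regular, and throw all the corresponding values into $H$. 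I would phrase the argument via this maximal antichain to be safe: $H$ is then the set of all $y\in Y$ such that some condition forces $\dot h=\check y$, and since there is a maximal antichain of such deciding conditions, $|H|<\theta$ follows from $|\Por|<\theta$ (and regularity of $\theta$ is used only to guarantee the bound when one is sloppy about multiplicities — in fact $|H|\le|\Por|$ directly). Everything else is bookkeeping with the definitions.
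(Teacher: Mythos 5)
The paper gives no proof of this lemma (it is quoted from \cite[Lemma~2.7]{CM}), so I compare your argument with the standard one. Your proof has a genuine gap at its central step: the claim that below every condition $p$ there are $q_p\leq p$ and a ground-model $y_p\in Y$ with $q_p\Vdash\dot h=\check y_p$ (equivalently, that the conditions deciding $\dot h$ to be a ground-model element are dense) is false. Take $\Por=\Cor=2^{<\omega}$, $\Rbf=\baire$, and let $\dot h$ be the canonical name for the Cohen real: for every condition $s$ and every ground-model $y$, the extension of $s$ by the single bit $1-y(|s|)$ forces $\dot h\neq\check y$, so \emph{no} condition decides $\dot h$ to be a ground-model real. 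Your two attempted repairs both fail for the same reason: a decreasing sequence of conditions deciding ever longer initial segments of $\dot h$ need not have a lower bound (small posets are not countably closed --- if they were, they would add no new reals and the lemma would be vacuous), and a maximal antichain of deciding conditions below $p$ is maximal only within the set of deciding conditions, which is not predense below $p$ (here it is empty). A telling symptom is that your argument never uses that $\Rbf$ is a Polish relational system, and if it worked it would show that a poset of size ${<}\theta$ adds no new element of $Y$ at all, contradicting the fact that Cohen forcing adds a Cohen real.

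The correct argument exploits the decomposition ${\sqsubset}=\bigcup_n{\sqsubset_n}$ from \autoref{b3}. For $q\in\Por$ and $n<\omega$ put $B_{q,n}:=\set{x\in X}{q\Vdash x\sqsubset_n\dot h}$; since $\sqsubset_n$ is closed, $B_{q,n}$ is a closed set coded in $V$ which $q$ forces to be contained in $({\sqsubset_n})^{\dot h}$. One then finds a single $y_{q,n}\in Y\cap V$ with $x\sqsubset y_{q,n}$ for all $x\in B_{q,n}$: in the concrete systems ($\Lc^*$, $\baire$, etc.) by taking coordinatewise unions or suprema of the forced values, and abstractly because ``$B_{q,n}$ is $\sqsubset_n$-bounded by a member of $Y$'' is a $\Sigma^1_2$ statement with parameters in $V$, true in the extension (witnessed by $\dot h$), hence true in $V$ by Shoenfield absoluteness. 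Setting $H:=\set{y_{q,n}}{q\in\Por,\ n<\omega}$ gives $|H|\leq|\Por|\cdot\aleph_0<\theta$, and if some $p$ forces $x\sqsubset\dot h$ then some $q\leq p$ forces $x\sqsubset_n\dot h$ for a fixed $n$, whence $x\in B_{q,n}$ and $x\sqsubset y_{q,n}\in H$; contrapositively, $x$ $\Rbf$-unbounded over $H$ implies $\Vdash x\not\sqsubset\dot h$. Only your derivation of the ``in particular'' clause from the first assertion (Cohen forcing is countable) is sound as written.
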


\begin{example}\label{b0}
We now present the instances of Prs and the corresponding good posets we use in our applications.
\begin{enumerate}[label=\normalfont(\arabic*)]

\item\label{b0:1}
Define $\Omega_n:=\set{a\in [2^{<\omega}]^{<\aleph_0}}{\Lb(\bigcup_{s\in a}[s])\leq 2^{-n}}$ (endowed with the discrete topology) and put $\Omega:=\prod_{n<\omega}\Omega_n$ with the product topology, which is a perfect Polish space. For every $x\in \Omega$ denote
\[N_{x}:=\bigcap_{n<\omega}\bigcup_{m\geq n}\bigcup_{s\in x(m)}[s],\] which is clearly a Borel null set in $2^{\omega}$.

Define the Prs $\Cn:=\la\Omega,\cantor,{\sqsubset^{\rm n}}\ra$ where $x\sqsubset^{\rm n} z$ iff $z\notin N_{x}$. Recall that any null set in $\cantor$ is a subset of $N_{x}$ for some $x\in \Omega$, so $\Cn$ and $\Cbf_\Nwf^\perp$ are Tukey-Galois equivalent. Hence, $\bfrak(\Cn)=\cov(\Nwf)$ and $\dfrak(\Cn)=\non(\Nwf)$.

 Any $\mu$-centered poset is $\mu^+$-$\Cn$-good (\cite{Br}). In particular, $\sigma$-centered posets are $\Cn$-good.

\item\label{b0:2}
The relational system $\Ed_b$ is Polish when $b=\seq{b(n)}{n<\omega}$ is a sequence of non-empty countable sets such that $|b(n)|\geq 2$ for infinitely many $n$.
Consider $\Ed:=\la\baire,\baire,{\neq^\infty}\ra$.
By~\cite[Thm.~2.4.1 \& Thm.~2.4.7]{BJ} (see also~\cite[Thm.~5.3]{CMlocalc}), $\bfrak(\Ed)=\non(\Mwf)$ and $\dfrak(\Ed)=\cov(\Mwf)$.

\item\label{b0:3} The relational system $\baire:=\la\baire,\baire,{\leq^*}\ra$ is Polish. Typical examples of $\baire$-good sets are $\Eor^h_b$, $\Qor_f$ and random forcing. More generally, $\sigma$-$\Fr$-linked posets are $\D$-good (see~\cite{mejiavert,BCM,CMR2}).

\item\label{b0:4}
For each $k<\omega$, let $\id^k:\omega\to\omega$ such that $\id^k(i)=i^k$ for all $i<\omega$ and $\Hcal:=\largeset{\id^{k+1}}{k<\omega}$. Let $\Lc^*:=\la\baire, \Scal(\omega, \Hcal),{\in^*}\ra$ be the Polish relational system where \[\Swf(\omega, \Hcal):=\set{\varphi\colon \omega\to[\omega]^{<\aleph_0}}{\exists{h\in\Hcal}\, \forall{i<\omega}\colon|\varphi(i)|\leq h(i)},\]
and recall that $x\in^*\varphi$ iff $\forall^\infty n\colon x(n)\in\varphi(n)$. As a consequence of~\cite[Thm.~2.3.9]{BJ} (see also~\cite[Thm.~4.2]{CMlocalc}), $\bfrak(\Lc^*)=\add(\Nwf)$ and $\dfrak(\Lc^*)=\cof(\Nwf)$.

Any $\mu$-centered poset is $\mu^+$-$\Lc^*$-good (see~\cite{Br,JS}) so, in particular, $\sigma$-centered posets are $\Lc^*$-good. Besides, Kamburelis~\cite{Ka} showed that any Boolean algebra with a strictly positive finitely additive measure is $\Lc^*$-good (in particular, any subalgebra of random forcing).

\item\label{b0:5} Let $\Mbf := \la\cantor,\Ior\times\cantor,{\sqsubm}\ra$ where
\[x \sqsubm (I,y) \text{ iff }\forall^\infty n\colon x\frestr I_n \neq y\frestr I_n.\]
This is a Polish relational system and $\Mbf\eqT \Cbf_\Mwf$ (by Talagrand~\cite{Tal98}, see e.g.~\cite[Prop.~13]{BWS}).

Note that, whenever $M$ is a transitive model of $\thzfc$, $c\in\cantor$ is a Cohen real over~$M$ iff $c$ is $\Mbf$-unbounded over~$M$.

\item\label{b0:6} In~\cite[Sec.~5]{BCM2}, we present a Polish relation system $\Rbf^f_\Gwf$, parametrized by a~countable set $\{f\}\cup\Gwf$ of increasing functions in $\baire$, which is useful to control $\add(\SNwf)$ and $\cof(\SNwf)$ in FS iterations (see \autoref{b9}). We do not need to review the definition of this relational system, but it is enough to indicate that any (poset forcing equivalent to a) Boolean algebra with a strictly positive finitely additive measure, and any $\sigma$-centered poset, are $\Rbf^f_\Gwf$-good (\cite[Thm.~5.8 \& Cor.~5.9]{BCM2}).
\end{enumerate}
\end{example}

\begin{example}[{\cite[Ex.~4.19]{CM}}]\label{KOpre}
Kamo and Osuga~{\cite{KO}} define a gPrs with parameters $\varrho,\rho\in\baire$, which we denote by $\aLc^*(\varrho,\rho)$. For the purposes of this paper, it is just enough to review its properties. Assume that $\varrho>0$ and $\rho\geq^* 1$.
\begin{enumerate}[label = \rm (\alph*)]
    \item\label{KOa} $\aLc^*(\varrho,\rho)\leqT \aLc(\varrho,\rho^{\id})$~\cite[Lem.~4.21]{CM}.
    \item\label{KOb} If $\sum_{i<\omega}\frac{\rho(i)^i}{\varrho(i)}<\infty$ then $\aLc(\varrho,\rho^{\id})\leqT\Cbf_{\Nwf}^\perp$~\cite[Lem.~2.3]{KM21}, so $\cov(\Nwf)\leq\bfrak(\aLc^*(\varrho,\rho))$ and $\dfrak(\aLc^*(\varrho,\rho))\leq\non(\Nwf)$
    \item\label{KOc} If $\varrho\not\leq^*1$ and $\rho\geq^*1$, then any $(\rho,\varrho^{\rho^{\id}})$-linked poset is $\aLc^*(\varrho,\rho)$-good (see~\cite[Lem.~10]{KO} and~\cite[Lem.~4.23]{CM}).
    \item\label{KOd} Any $\theta$-centered poset is $\theta^+$-$\aLc^*(\varrho,\rho)$-good~\cite[Lem.~4.24]{CM}.
\end{enumerate}
\end{example}

We now turn to FS (finite support) iterations.

\begin{definition}[Direct limit]\label{b7}
We say that $\seq{\Por_i}{i\in S}$ is a \emph{directed system of posets} if $S$ is a directed preorder and, for any $j\in S$, $\Por_j$ is a poset and $\Por_i\subsetdot\Por_j$ for all $i\leq_S j$.

For such a system, we define its \emph{direct limit} $\limdir_{i\in S}\Por_i:=\bigcup_{i\in S}\Por_i$ ordered by
\[q\leq p \sii \exists\, i\in S\colon p,q\in\Por_i\text{ and }q\leq_{\Por_i} p.\]
\end{definition}

The Cohen reals added along an iteration are usually used as witnesses for Tukey connections, as they form strong witnesses. For example:

\begin{lemma}[{\cite[Lemma~4.14]{CM}}]\label{lem:strongCohen}
Let $\mu$ be a cardinal with uncountable cofinality, let\/ $\seq{\Por_{\alpha}}{\alpha<\mu}$ be a\/ $\subsetdot$-increasing sequence of\/ $\cf(\mu)$-cc posets and let\/ $\Por_\mu=\limdir_{\alpha<\mu}\Por_{\alpha}$. If\/ $\Por_{\alpha+1}$ adds a Cohen real $\dot{c}_\alpha\in X$ over $V^{\Por_\alpha}$ for any $\alpha<\mu$, then\/ $\Por_{\mu}$ forces that\/ $\set{\dot{c}_\alpha}{\alpha<\mu}$ is a $\mu$-$\Rbf$-unbounded family. In particular, $\Por_\mu$ forces that $\mu\leqT\Cbf_\Mwf \leqT \Rbf$.
\end{lemma}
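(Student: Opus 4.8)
The plan is to show directly that $\Por_\mu$ forces $\set{\dot c_\alpha}{\alpha<\mu}$ to be \emph{strongly $\mu$-$\Rbf$-unbounded} in the sense of \autoref{b1}(2), from which the Tukey chain follows at once. Fix a $\Por_\mu$-name $\dot y$ for an element of $Y$; since $Y$ is an analytic subset of a Polish space, this is essentially a name for a real, and the first step is to reflect it to a bounded stage. For each $n<\omega$ pick a maximal antichain deciding the $n$-th coordinate of $\dot y$; by the $\cf(\mu)$-cc each such antichain has size $<\cf(\mu)$, so the set $D$ of all conditions occurring in these countably many antichains has size $<\cf(\mu)$ (using that $\cf(\mu)$ is regular and uncountable). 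Since $\Por_\mu=\limdir_{\alpha<\mu}\Por_\alpha=\bigcup_{\alpha<\mu}\Por_\alpha$, each $p\in D$ lies in $\Por_{\gamma_p}$ for some $\gamma_p<\mu$, and $\beta:=\sup_{p\in D}\gamma_p<\mu$ because $|D|<\cf(\mu)$; as the sequence is $\subsetdot$-increasing, $D\subseteq\Por_\beta$, so $\dot y$ is (equivalent to) a $\Por_\beta$-name and $\Por_\mu\Vdash y\in V_\beta$ for this fixed $\beta<\mu$ (depending only on $\dot y$).

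Next I use the Cohen reals. For each $\alpha$ with $\beta<\alpha<\mu$ we have $\Por_\beta\subsetdot\Por_\alpha\subsetdot\Por_{\alpha+1}\subsetdot\Por_\mu$ (standard for direct limits of $\subsetdot$-increasing systems), so $\Por_{\alpha+1}\Vdash y\in V_\beta\subseteq V_\alpha$. By \autoref{b3}~\ref{b3:3} the set $\set{x\in X}{x\sqsubset y}=\bigcup_{n<\omega}(\sqsubset_n)^y$ is a meager Borel subset of $X$ whose code is computable in $V_\alpha$ from $y$ and the fixed (ground-model) code of $\Rbf$, each section $(\sqsubset_n)^y$ being closed nowhere dense. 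Since $\Por_{\alpha+1}$ forces $\dot c_\alpha$ to be a Cohen real over $V_\alpha$, and a Cohen real over a model avoids every meager Borel set coded there, $\Por_{\alpha+1}\Vdash\dot c_\alpha\not\sqsubset y$; this being a Borel assertion about $\dot c_\alpha$ and a code lying in $V_{\alpha+1}$, it is absolute to $V^{\Por_\mu}$. Hence $\Por_\mu\Vdash\set{\alpha<\mu}{\dot c_\alpha\sqsubset\dot y}\subseteq\beta+1$, a set of size $|\beta|<\mu$. As $\dot y$ was arbitrary and the $\dot c_\alpha$ are forced pairwise distinct (e.g.\ $c_\alpha\notin V_\alpha$ while $c_{\alpha'}\in V_\alpha$ for $\alpha'<\alpha$), this is exactly the statement that $\set{\dot c_\alpha}{\alpha<\mu}$ is strongly $\mu$-$\Rbf$-unbounded.

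For the Tukey chain, note that the argument above bounded the index set $\set{\alpha<\mu}{\dot c_\alpha\sqsubset\dot y}$ strictly below $\mu$, not merely estimated its cardinality; so $\alpha\mapsto\dot c_\alpha$ together with $y\mapsto\sup\set{\alpha<\mu}{\dot c_\alpha\sqsubset y}$ form a Tukey connection from the directed preorder $\la\mu,\mu,{\leq}\ra$ into $\Rbf$. Running the same two steps with ``$x\sqsubset y$'' replaced by ``$x\in B$'' for an arbitrary meager $B\subseteq X$ (every meager set being contained in a meager Borel set, which is $V_\beta$-coded for some $\beta<\mu$) gives a Tukey connection $\mu\to\la X,\Mwf(X),{\in}\ra\eqT\Cbf_\Mwf$, while $\Cbf_\Mwf\leqT\Rbf$ is \autoref{b4}; hence $\Por_\mu$ forces $\mu\leqT\Cbf_\Mwf\leqT\Rbf$. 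The one delicate point is the reflection in the first step: it is essential that the bounding ordinal $\beta$ be strictly below $\mu$ (so that the index set literally sits inside $\beta+1$, even when $\mu$ is singular), and this is precisely where the $\cf(\mu)$-cc, the uncountability of $\cf(\mu)$, and the direct-limit hypothesis are all needed; everything else is routine.
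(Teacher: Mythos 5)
Your proof is correct, and since the paper only cites this result from \cite[Lemma~4.14]{CM} without reproducing the argument, your write-up follows exactly the standard proof: reflect the name $\dot y$ to some $\Por_\beta$ with $\beta<\mu$ via the $\cf(\mu)$-cc and the direct-limit structure, note that $(\sqsubset)^y$ is meager and coded in $V_\alpha$ for $\alpha>\beta$, and use upward absoluteness of the Borel statement $\dot c_\alpha\not\sqsubset y$ from $V_{\alpha+1}$ to $V^{\Por_\mu}$. In particular you correctly isolate the two delicate points (that $\beta$ is strictly below $\mu$ even for singular $\mu$, and that the conclusion survives later forcing by absoluteness rather than by preservation of Cohen-ness), so nothing is missing.
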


Good posets are preserved along FS iterations as follows.

\begin{theorem}[{\cite[Sec.~4]{BCM2}}]\label{b8}
Let\/ $\seq{ \Por_\xi,\Qnm_\xi}{\xi<\pi}$ be a FS iteration such that, for $\xi<\pi$, $\Por_\xi$ forces that\/ $\Qnm_\xi$ is a non-trivial $\theta$-cc $\theta$-$\Rbf$-good poset.
Let\/ $\set{\gamma_\alpha}{\alpha<\delta}$ be an increasing enumeration of\/ $0$ and all limit ordinals smaller than $\pi$ (note that $\gamma_\alpha=\omega\alpha$), and for $\alpha<\delta$ let $\dot c_\alpha$ be a\/~$\Por_{\gamma_{\alpha+1}}$-name of a Cohen real in $X$ over $V_{\gamma_\alpha}$.

Then\/ $\Por_\pi$ is $\theta$-$\Rbf$-good. Moreover,
if $\pi\geq\theta$ then\/ $\Cbf_{[\pi]^{<\theta}}\leqT\Rbf$, $\bfrak(\Rbf)\leq\theta$ and\/ $|\pi|\leq\dfrak(\Rbf)$.
\end{theorem}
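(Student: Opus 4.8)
The plan is to split the argument into two essentially independent parts: first establishing that $\Por_\pi$ is $\theta$-$\Rbf$-good, and then deriving the Tukey-order statements. For the first part I would prove, by induction on $\pi$, the slightly more robust assertion that \emph{every} FS iteration of posets forced to be $\theta$-cc and $\theta$-$\Rbf$-good is itself $\theta$-cc and $\theta$-$\Rbf$-good; the $\theta$-cc part is the standard $\Delta$-system argument for FS iterations, which I take for granted. There are three cases: the successor step, a limit step of cofinality ${\geq}\theta$, and a limit step of cofinality ${<}\theta$.

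The successor step reduces to a two-step lemma: if $\Por$ is $\theta$-cc and $\theta$-$\Rbf$-good and $\Por$ forces that $\Qnm$ is $\theta$-cc and $\theta$-$\Rbf$-good, then $\Por\ast\Qnm$ is $\theta$-$\Rbf$-good. Given a $\Por\ast\Qnm$-name $\dot h$ for a member of $Y$, I would work in $V^\Por$ and apply goodness of $\Qnm$ to get a $\Por$-name for a set $\set{\dot h_i}{i<\dot\theta'}\subseteq Y$ with $\dot\theta'<\theta$; by $\theta$-cc and regularity of $\theta$ we may assume $\dot\theta'$ is bounded by a fixed $\theta'<\theta$. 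Applying goodness of $\Por$ to each $\dot h_i$ yields $H_i\in V$ of size ${<}\theta$, and $H:=\bigcup_{i<\theta'}H_i$ still has size ${<}\theta$ since $\theta$ is regular. If $x\in X$ is $\Rbf$-unbounded over $H$, then $\Vdash_\Por x\not\sqsubset\dot h_i$ for every $i$, so $\Por$ forces that $x$ is $\Rbf$-unbounded over $\set{\dot h_i}{i<\dot\theta'}$, and then goodness of $\Qnm$ gives $\Vdash_{\Por\ast\Qnm}x\not\sqsubset\dot h$; hence $H$ witnesses goodness for $\dot h$.

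For a limit $\pi$ with $\cf(\pi)\geq\theta$ I would reflect the name down: since $Y$ lies in a Polish space, a $\Por_\pi$-name for a member of $Y$ is determined by countably many maximal antichains, each of size ${<}\theta$ by $\theta$-cc, hence --- $\theta$ being uncountable and regular --- by ${<}\theta$ conditions whose finite supports form a set $S\subseteq\pi$ of size ${<}\theta$; as $\cf(\pi)\geq\theta$ we have $\sup S<\pi$, so $\dot h$ is forcing-equivalent to a $\Por_\gamma$-name for some $\gamma<\pi$, and the inductive witness for $\Por_\gamma$ still works because $\Por_\gamma\subsetdot\Por_\pi$. The case $\cf(\pi)<\theta$ is the technical heart and the step I expect to be the main obstacle: writing $\pi$ as the limit of a $\subsetdot$-cofinal sequence $\seq{\pi_j}{j<\lambda}$ with $\lambda=\cf(\pi)<\theta$, the tails $\Por_{\pi_{j+1}}/\Por_{\pi_j}$ are $\theta$-cc and $\theta$-$\Rbf$-good by the inductive hypothesis over $V_{\pi_j}$ (their order types are ${<}\pi$), so $\Por_\pi$ is a FS iteration of length $\lambda<\theta$ of such posets; one then has to treat directly a FS iteration of length a regular cardinal ${<}\theta$ --- in particular of length $\omega$ --- where $\dot h$ need not be captured below any single stage, so the witness must be assembled by a bookkeeping argument along the ${<}\theta$ stages, using the goodness of each iterand together with the Polish structure of $\Rbf$ from \autoref{b3} and the regularity of $\theta$ to keep the union of the local witnesses of size ${<}\theta$. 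This is exactly where the hypotheses ``$\theta$ regular uncountable'', ``$\Rbf$ Polish'' and ``finite supports'' are all genuinely used.

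For the second part, assume $\pi\geq\theta$. By hypothesis each $\Por_{\gamma_{\alpha+1}}$ adds a Cohen real $\dot c_\alpha$ over $V_{\gamma_\alpha}$, so \autoref{lem:strongCohen} (applied with $\mu=|\pi|$, using that $\Por_\pi$ is $\cf(\mu)$-cc because it is $\theta$-cc and $\pi\geq\theta$) shows $\Por_\pi$ forces that $\set{\dot c_\alpha}{\alpha<\delta}$ is an unbounded family witnessing $|\pi|\leqT\Cbf_\Mwf\leqT\Rbf$, whence already $|\pi|\leq\dfrak(\Rbf)$. To obtain the sharp bound $\bfrak(\Rbf)\leq\theta$ together with the full $\Cbf_{[\pi]^{<\theta}}\leqT\Rbf$, I would invoke the goodness from the first part: it is precisely what guarantees that no member of $Y$ in $V_\pi$ can be $\Rbf$-above $\theta$-many of the $\dot c_\alpha$, so this family (reindexed to size $|\pi|$) is strongly $\theta$-$\Rbf$-unbounded; \autoref{b2}(a) then yields $\Cbf_{[\pi]^{<\theta}}\leqT\Rbf$, and hence $\bfrak(\Rbf)\leq\non([\pi]^{<\theta})=\theta$ and $|\pi|=\cov([\pi]^{<\theta})\leq\dfrak(\Rbf)$.
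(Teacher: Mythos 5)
This theorem is only cited in the paper (from \cite{BCM2}, ultimately going back to the framework of \cite{CM,JS,Br}), so there is no in-text proof to compare against; judged on its own merits, your proposal has the right skeleton but two genuine gaps. The successor step and the reflection argument for limits of cofinality ${\geq}\theta$ are correct and standard (for the latter, note that $x\not\sqsubset y$ is $\Pi^0_2$ by \autoref{b3}~\ref{b3:3}, hence absolute, so the witness for $\Por_\gamma$ indeed transfers to $\Por_\pi$). The problem is that the case you yourself flag as ``the technical heart'' --- direct limits of cofinality ${<}\theta$, in particular of countable cofinality --- is precisely where the content of the theorem lives, and ``a bookkeeping argument along the stages'' is not a proof of it. The standard argument is not an accumulation of the iterands' witnesses: given a $\Por_\pi$-name $\dot h$ with $\pi=\sup_n\pi_n$, one uses the closedness of each $\sqsubset_k$ to build, for each $n,k$ and each condition $q$ in a suitable ${<}\theta$-sized set, a ground-model interpretation $y_{q,n,k}\in Y$ of $\dot h$ by a fusion of conditions deciding longer and longer finite approximations of $\dot h$, arranged so that if some extension of $q$ forces $x\sqsubset_k\dot h$ then $x$ already fails to be $\Rbf$-unbounded over the collected interpretations; $H$ is the set of these $y_{q,n,k}$. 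Without this construction the induction does not close. Your proposed reduction to length-$\lambda$ iterations of the quotients $\Por_{\pi_{j+1}}/\Por_{\pi_j}$ is also mildly circular: that these tails are again FS iterations of good iterands over $V_{\pi_j}$ is part of the machinery to be established, and the residual case of length $\omega$ is exactly the unproved core.

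The second gap is in the ``moreover'' part. Goodness of $\Por_\pi$ in the sense of \autoref{b5} only constrains ground-model elements $x\in X\cap V$, whereas the $\dot c_\alpha$ live in the intermediate extensions $V_{\gamma_{\alpha+1}}$; so the assertion that goodness of $\Por_\pi$ over $V$ ``guarantees that no member of $Y$ in $V_\pi$ can be $\Rbf$-above $\theta$-many of the $\dot c_\alpha$'' is a non sequitur as stated. What is actually used is the relativized statement, proved alongside the induction, that each tail $\Por_\pi/\Por_{\gamma_{\alpha+1}}$ is $\theta$-$\Rbf$-good over $V_{\gamma_{\alpha+1}}$, combined with the fact that $\dot c_\alpha$ is $\Rbf$-unbounded over $V_{\gamma_\alpha}$ (\autoref{b4}); only then does one get, for each name $\dot y$, a set of fewer than $\theta$ many indices outside of which $\dot c_\alpha\not\sqsubset\dot y$ is forced, after which \autoref{b2} yields $\Cbf_{[\pi]^{<\theta}}\leqT\Rbf$ and the cardinal inequalities exactly as you conclude. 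The $|\pi|\leq\dfrak(\Rbf)$ part via \autoref{lem:strongCohen} is fine.
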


We even have nice theorems for $\SNwf$.

\begin{theorem}[{\cite[Thm.~5.10]{BCM2}}]\label{b9}
Let $\theta_0\leq \theta$ be uncountable regular cardinals,
let $\lambda$ be a~cardinal such that $\lambda=\lambda^{<\theta_0}$
 and let $\pi=\lambda\delta$ (ordinal product) for some ordinal\/ $0<\delta<\lambda^+$. Assume $\theta \leq \lambda$ and $\cf(\pi)\geq \theta_0$. If $\Por$ is a~FS iteration of length $\pi$ of non-trivial $\theta_0$-cc $\theta$-$\Rbf^f_\Gwf$-good posets of size ${\leq}\lambda$,
then $\Por$ forces\/ $\Cbf_{[\lambda]^{<\theta}}\leqT\SNwf$, in particular,
$\add(\SNwf)\leq\theta$ and $\lambda\leq\cof(\SNwf)$.
\end{theorem}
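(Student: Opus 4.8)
\textbf{Proof plan for Theorem~\ref{b9}.}
The plan is to build the Tukey connection $\Cbf_{[\lambda]^{<\theta}}\leqT\SNwf$ directly, using the machinery already assembled in the excerpt: by \autoref{b2}(a) it suffices to exhibit a strongly $\theta$-$\SNwf$-unbounded family indexed by $\lambda$, i.e.\ a family $\set{A_i}{i<\lambda}$ of strong measure zero sets such that for every $B\in\SNwf$ the set $\set{i<\lambda}{A_i\subseteq B}$ has size ${<}\theta$. The natural candidates for the $A_i$ are the Cohen reals added cofinally along the iteration (as singletons, or rather suitable $\SNwf$-sets coded by them), and the key transfer is that, by \autoref{b4} applied to the Polish relational system $\Rbf^f_\Gwf$ from \autoref{b0}~\ref{b0:6}, a Cohen real over an intermediate model is $\Rbf^f_\Gwf$-unbounded over that model; combined with the fact (recorded in~\cite{BCM2}, which the excerpt tells us controls $\add(\SNwf)$ via $\Rbf^f_\Gwf$) that $\Rbf^f_\Gwf$-unboundedness over a model $M$ yields a strong measure zero set not contained in any $\SNwf$-set coded in $M$, this is exactly what lets the Cohen reals witness the unboundedness of $\SNwf$.

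First I would set up the bookkeeping: write $\pi=\lambda\delta$ and enumerate $\lambda$ into $\delta$ blocks of length $\lambda$; since $\lambda=\lambda^{<\theta_0}$, $\theta_0$ is regular, and each iterand has size ${\leq}\lambda$, a standard counting shows $|\Por_\pi|\leq\lambda$ and every real in the extension (in particular every $\SNwf$-set, which is coded by a real) appears in some $V_{\Por_\xi}$ with $\cf(\xi)\geq\theta_0$; moreover, by the $\lambda=\lambda^{<\theta_0}$ hypothesis and a $\Delta$-system / reflection argument, any $\Por_\pi$-name for a real is (forced to be) a name in a suborder $\Por_{\restriction S}$ for some $S\in[\pi]^{<\theta_0}$, hence is added at some stage of cofinality ${\geq}\theta_0$ that is ``captured'' by fewer than $\theta$ of the $\lambda$-many Cohen coordinates. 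Second, for each $i<\lambda$ I take the $\SNwf$-set $A_i$ canonically coded by the Cohen real added at the $i$-th Cohen stage (there are $\lambda$-many such stages because $\cf(\pi)\geq\theta_0$ and each block contributes $\lambda$ limit stages), and I verify $A_i\in\SNwf$ in the final model — this is where I use that $\Cor$ (Cohen forcing) is $\Rbf^f_\Gwf$-good together with \autoref{b8}-style preservation, so the relevant $\SNwf$-membership is preserved to the end. Third, given a $\Por_\pi$-name $\dot B$ for an element of $\SNwf$, by the capturing argument $\dot B$ is a name over $V_\xi$ for some $\xi$ with $\cf(\xi)\geq\theta_0$ involving only ${<}\theta$ of the Cohen coordinates; for every other index $i$ the Cohen real $\dot c_i$ is Cohen, hence $\Rbf^f_\Gwf$-unbounded, over $V_\xi$, so $A_i\not\subseteq B$. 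Hence $\set{i<\lambda}{A_i\subseteq B}$ has size ${<}\theta$, giving the strongly $\theta$-$\SNwf$-unbounded family and therefore $\Cbf_{[\lambda]^{<\theta}}\leqT\SNwf$; reading off cardinals via \autoref{b2} and \autoref{exm:Iwf} yields $\add(\SNwf)=\bfrak(\SNwf)\leq\non([\lambda]^{<\theta})=\theta$ and $\lambda=\cov([\lambda]^{<\theta})\leq\dfrak(\SNwf)=\cof(\SNwf)$ (the latter using $\theta\leq\lambda$).

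The main obstacle I anticipate is the ``capturing'' step: showing that every $\Por_\pi$-name for an $\SNwf$-set (equivalently for a real coding one) is reflected to a subiteration determined by ${<}\theta$ of the Cohen indices, with the indexing arranged so that the $\lambda$ Cohen reals genuinely sit ``cofinally and spread out'' relative to this reflection. This requires the ordinal-product structure $\pi=\lambda\delta$ with $0<\delta<\lambda^+$ and $\cf(\pi)\geq\theta_0$ precisely so that one can iterate the $\lambda=\lambda^{<\theta_0}$ hypothesis block-by-block; it is essentially the argument behind \autoref{b8}'s ``moreover'' clause, but applied to $\Rbf^f_\Gwf$ and upgraded from $\theta=\aleph_1$ to general regular $\theta\leq\lambda$, and care is needed that $\theta$-$\Rbf^f_\Gwf$-goodness of the iterands (not just $\Rbf^f_\Gwf$-goodness) is what licenses passing from ``$\dot c_i$ is Cohen over $V_\xi$'' to the global conclusion $A_i\not\subseteq B$ in $V_\pi$ — i.e.\ the goodness must survive the tail of the iteration after stage $\xi$. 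Everything else (the $\sigma$-ideal property of $\SNwf$, the size bound on $\Por_\pi$, and the final Tukey-to-cardinal translation) is routine given the lemmas quoted in the excerpt.
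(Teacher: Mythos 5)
First, a point of comparison: the paper does not actually prove \autoref{b9} --- it is quoted verbatim from \cite[Thm.~5.10]{BCM2} and used as a black box --- so there is no in-paper argument to measure your proposal against; I can only assess it on its own terms.

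Your framework is right (reduce to a strongly $\theta$-$\SNwf$-unbounded family of size $\lambda$ via \autoref{b2}, and exploit Cohen reals together with $\theta$-$\Rbf^f_\Gwf$-goodness), but the central construction --- the sets $A_i$ themselves --- is where the proposal breaks down, and the gap is not the ``capturing'' step you flag at the end. A set $A_i$ ``coded by the Cohen real added at the $i$-th Cohen stage'' cannot do the job. If $A_i$ is small (a singleton $\{c_i\}$, or anything countable), then ``$|\set{i<\lambda}{A_i\subseteq B}|<\theta$ for all $B\in\SNwf$'' is precisely the assertion that the $c_i$ form a strongly $\theta$-$\Cbf_{\SNwf}$-unbounded family, which would force $\non(\SNwf)\leq\theta$ and $\lambda\leq\cov(\SNwf)$; that contradicts the applications this theorem is used for (\autoref{appl:I} forces $\cov(\SNwf)=\theta_4<\theta_7=\lambda$ and $\non(\SNwf)=\theta_7$, with $\theta_2$ playing the role of $\theta$). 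If instead $A_i=[\sigma_i]_\infty$ for a single $\sigma_i$ read off from $c_i$, then $A_i$ lies in one Yorioka ideal $\Iwf_f$ but is not strong measure zero in the final model, since $\SNwf=\bigcap_{f}\Iwf_f$ is a statement about \emph{all} $f\in\baire\cap V_\pi$, including those added after stage $i$; no goodness-preservation argument repairs membership in $\SNwf$. This is exactly what the hypothesis $\pi=\lambda\delta$ is for: each index $i<\lambda$ must be assigned one generic coordinate in \emph{each} of the $\delta$ blocks (e.g.\ the stages $\lambda\varepsilon+i$ for $\varepsilon<\delta$), and $A_i$ is the intersection of the corresponding $\delta$-many sets of the form $[\sigma]_\infty$. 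Then $A_i\in\SNwf$ because every $f$ in $V_\pi$ appears at some stage (here $\cf(\pi)\geq\theta_0$ and $\lambda=\lambda^{<\theta_0}$ enter) and is handled by all later blocks, while $A_i\not\subseteq B$ for all but ${<}\theta$ many $i$ follows from $\theta$-$\Rbf^f_\Gwf$-goodness of the tail applied to a single well-chosen block witness. Your proposal invokes the block structure only for name-reflection and never for building the $A_i$, so as written the family you produce is either not contained in $\SNwf$ or proves inequalities that are false in the intended models.
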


We now present two preservation results for the covering of $\SNwf$, originally introduced by Pawlikowski~\cite{P90} and generalized and improved in~\cite{CarMej23}. Here, we use the notion of the \emph{segment cofinality} of an ordinal $\pi$:
\[\scf(\pi):=\min\set{|c|}{c\subseteq \pi \text{ is a non-empty final segment of }\pi}.\]

\begin{theorem}[{\cite{P90},~\cite[Thm.~5.4~(c)]{CarMej23}}]\label{b10}
Let\/ $\seq{ \Por_\xi}{\xi\leq\pi}$ be a $\subsetdot$-increasing sequence of posets such that\/ $\Por_\pi=\limdir_{\xi<\pi}\Por_\xi$. Assume that\/ $\cf(\pi)>\omega$, $\Por_\pi$ has the\/ $\cf(\pi)$-cc and\/ $\Por_{\xi+1}$ adds a Cohen real over the\/ $\Por_\xi$-generic extension for all $\xi<\pi$. Then $\pi\leqT \Cbf_{\SNwf}^\perp$, in particular\/ $\cov(\SNwf) \leq \cf(\pi) \leq \non(\SNwf)$.
\end{theorem}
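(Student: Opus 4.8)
The plan is to exhibit, in the generic extension $V_\pi$, an explicit Tukey connection $(\Psi_-,\Psi_+)\colon\la\pi,\pi,{\le}\ra\to\Cbf_\SNwf^\perp$. Since $\Cbf_\SNwf^\perp=\la\SNwf,\cantor,{\in^\perp}\ra$ with $A\in^\perp x$ iff $x\notin A$, what is needed are maps $\Psi_-\colon\pi\to\SNwf$ and $\Psi_+\colon\cantor\to\pi$ with the property that $\Psi_+(x)<\xi$ implies $x\in\Psi_-(\xi)$, for all $\xi<\pi$ and $x\in\cantor$. So it suffices to produce in $V_\pi$ an increasing sequence $\la A_\alpha\colon\alpha<\cf(\pi)\ra$ of sets in $\SNwf$ with $\bigcup_{\alpha<\cf(\pi)}A_\alpha=\cantor$: fixing an increasing cofinal sequence $\la\pi_\alpha\colon\alpha<\cf(\pi)\ra$ of $\pi$, set $\Psi_-(\xi):=A_{\alpha(\xi)}$ where $\alpha(\xi)$ is least with $\pi_\alpha\ge\xi$, and $\Psi_+(x):=\pi_{\alpha(x)}$ where $\alpha(x)$ is least with $x\in A_\alpha$; then $\Psi_+(x)<\xi\le\pi_{\alpha(\xi)}$ gives $\alpha(x)<\alpha(\xi)$, hence $x\in A_{\alpha(x)}\subseteq A_{\alpha(\xi)}=\Psi_-(\xi)$ by monotonicity. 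Granting $\pi\leqT\Cbf_\SNwf^\perp$, the ``in particular'' is routine: $\bfrak(\pi)=\dfrak(\pi)=\cf(\pi)$, $\dfrak(\Cbf_\SNwf^\perp)=\bfrak(\Cbf_\SNwf)=\non(\SNwf)$ and $\bfrak(\Cbf_\SNwf^\perp)=\dfrak(\Cbf_\SNwf)=\cov(\SNwf)$, so monotonicity of $\bfrak,\dfrak$ under $\leqT$ yields $\cf(\pi)\le\non(\SNwf)$ and $\cov(\SNwf)\le\cf(\pi)$.

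The real work is building the chain $\la A_\alpha\ra$ — this is exactly Pawlikowski's theorem on finite support iterations, in the sharpened form of \cite{P90} and \cite[Thm.~5.4~(c)]{CarMej23}. First I would record two consequences of the hypotheses. Since $\cf(\pi)>\omega$, $\pi$ is a limit ordinal; and since $\Por_\pi$ has the $\cf(\pi)$-cc and $\Por_\pi=\limdir_{\xi<\pi}\Por_\xi=\bigcup_{\xi<\pi}\Por_\xi$, a nice name for a real $x\in\cantor\cap V_\pi$ uses countably many antichains of size $<\cf(\pi)$, hence fewer than $\cf(\pi)$ conditions in total, each lying in some $\Por_\eta$ with $\eta<\pi$; as fewer than $\cf(\pi)$ ordinals below $\pi$ have supremum below $\pi$, every such $x$ already belongs to some $V_{\rho(x)}$ with $\rho(x)<\pi$. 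Likewise every $f\in\baire\cap V_\pi$ belongs to some $V_{\pi_\alpha}$, so a dominating family of $V_\pi$ is the increasing union of dominating families present at the stages $\pi_\alpha$, and by \autoref{charSN} this is the family against which the $A_\alpha$ need to be covered.

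For the chain itself, the idea is that $A_\alpha$ collects the reals that get ``caught infinitely often'' by covers read off from Cohen reals no later than roughly stage $\pi_\alpha$. Given $f\in\baire\cap V_\pi$, appearing at stage $\gamma_f$, one uses a Cohen real $c_\eta$ at a successor stage past $\max(\gamma_f,\pi_\alpha)$ — there are cofinally many such, as $\cf(\pi)>\omega$ — to produce a $\sigma^\alpha_f$ with $\hgt_{\sigma^\alpha_f}=f$ and $A_\alpha\subseteq[\sigma^\alpha_f]_\infty$, splitting $\omega$ into infinitely many infinite blocks and feeding one Cohen real per block in order to upgrade ``covered'' to ``covered infinitely often''. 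Then $A_\alpha\in\SNwf^{V_\pi}$ is immediate from \autoref{charSN}, since the $\sigma^\alpha_f$ witness the covering property for every $f$; the $A_\alpha$ are arranged to increase; and the targets of the $\sigma^\alpha_f$ are chosen along the cofinal sequence of stages so that every real of $V_\pi$ eventually enters some $A_\alpha$, giving $\bigcup_\alpha A_\alpha=\cantor$.

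The crux — and the place where every hypothesis is used — is that the $A_\alpha$ must be strong measure zero in the \emph{full} model $V_\pi$, not just relative to an intermediate stage: a single Cohen real cannot place an uncountable ground-model set of positive outer measure inside one $[\sigma]_\infty$, so cofinally many Cohen reals are genuinely required, and the uncountable cofinality guarantees that the relevant tails of stages remain cofinal while the bookkeeping runs. The combinatorial core of this step is Pawlikowski's lemma, which I would invoke from \cite{CarMej23} rather than reprove.
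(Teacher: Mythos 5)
The paper states this theorem purely as a citation (to Pawlikowski and \cite[Thm.~5.4~(c)]{CarMej23}) without giving a proof, and your reconstruction is correct and follows exactly the standard argument behind those sources: reduce the Tukey connection $\pi\leqT\Cbf_{\SNwf}^{\perp}$ to producing an increasing chain $\la A_\alpha\colon\alpha<\cf(\pi)\ra$ of strong measure zero sets covering $\cantor$, take $A_\alpha=\cantor\cap V_{\pi_\alpha}$ (which your ``reals caught by covers up to stage $\pi_\alpha$'' amounts to), and use the $\cf(\pi)$-cc/direct-limit capture of reals together with cofinally many Cohen reals to witness $A_\alpha\in\SNwf$ in $V_\pi$ via \autoref{charSN}. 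Your Tukey-connection bookkeeping and the derivation of $\cov(\SNwf)\le\cf(\pi)\le\non(\SNwf)$ are correct, and the one ingredient you defer to the citation --- that a single Cohen real over $M$ yields, for each $f\in\baire\cap M$, a $\sigma$ with $\hgt_\sigma=f$ and $\cantor\cap M\subseteq[\sigma]_\infty$ --- is precisely what the paper itself defers.
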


\begin{theorem}[{\cite{P90},~\cite[Cor.~5.9]{CarMej23}}]\label{b11}
Assume that $\theta\geq\aleph_1$ is regular. Let\/
$\Por_\pi=\seq{\Por_\xi,\Qnm_\xi}{ \xi<\pi}$ be a FS iteration of non-trivial precaliber $\theta$ posets such that\/ $\cf(\pi)>\omega$ and\/ $\Por_\pi$ has\/ $\cf(\pi)$-cc,
and let $\lambda:=\scf(\pi)$. Then\/ $\Por_\pi$ forces\/ $\Cbf_{[\lambda]^{<\theta}}\leqT\Cbf_{\SNwf}^{\perp}$. In particular, whenever\/ $\scf(\pi)\geq\theta$, $\Por_\pi$ forces\/ $\cov(\SNwf)\leq\theta$ and\/ $\scf(\pi)\leq\non(\SNwf)$.
\end{theorem}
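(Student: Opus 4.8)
The plan is to work in the $\Por_\pi$-extension and, via \autoref{b2}(a), reduce the Tukey statement to the construction of a \emph{strongly $\theta$-$\Cbf_\SNwf^{\perp}$-unbounded family} of size $\lambda=\scf(\pi)$. Since $\Cbf_\SNwf^{\perp}=\la\SNwf,\cantor,{\not\ni}\ra$, this means exhibiting a sequence $\seq{A_i}{i<\lambda}$ of strong measure zero sets such that every $x\in\cantor$ lies outside fewer than $\theta$ of the $A_i$. Granting such a family, the ``in particular'' clause drops out of the monotonicity of $\bfrak,\dfrak$ under $\leqT$: $\cov(\SNwf)=\bfrak(\Cbf_\SNwf^{\perp})\le\bfrak(\Cbf_{[\lambda]^{<\theta}})=\non([\lambda]^{<\theta})=\theta$ and $\non(\SNwf)=\dfrak(\Cbf_\SNwf^{\perp})\ge\dfrak(\Cbf_{[\lambda]^{<\theta}})=\cov([\lambda]^{<\theta})=\lambda$, using that $\theta$ is regular and $\theta\le\lambda$ (this is where the hypothesis $\scf(\pi)\ge\theta$ is needed, so that $\Cbf_{[\lambda]^{<\theta}}$ is nontrivial).

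The building blocks are the usual facts about Cohen reals in FS iterations. Since all iterands are non-trivial, every limit stage of the iteration adds a Cohen real over all earlier $V_\eta$; as $\cf(\pi)>\omega$, for each $\xi<\pi$ and each $f\in\baire\cap V_\xi$ there is, in $V_\pi$, a sequence $\sigma\in(2^{<\omega})^\omega$ that is Cohen-generic over $V_\xi$ with $\hgt_\sigma=f$. For such a $\sigma$ the set $[\sigma]_\infty$ is comeager in $V_\xi[\sigma]$; hence back in $V_\pi$ we get $\cantor^{V_\xi}\subseteq[\sigma]_\infty$ and, moreover, $[\sigma]_\infty$ contains every real that is Cohen over $V_\xi$ and appears at a later stage (such a real misses the meager set $\cantor\smallsetminus[\sigma]_\infty$ as soon as the latter is coded into the model). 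By \autoref{charSN} this already shows $\cantor^{V_\xi}\in\SNwf$ in $V_\pi$ for every $\xi<\pi$, and the family $\seq{\cantor^{V_\xi}}{\xi<\pi}$ reproves \autoref{b10}. The extra work in \autoref{b11} is to get $\Cbf_{[\lambda]^{<\theta}}$, rather than merely $\pi$, below $\Cbf_\SNwf^{\perp}$: if $c\subseteq\pi$ is a final segment with $|c|=\lambda$, the naive family $\seq{\cantor^{V_\xi}}{\xi\in c}$ does \emph{not} work, because a real born at stage $\eta$ lies outside $\cantor^{V_\xi}$ for $|c\cap\eta|$ many indices, and $|c\cap\eta|$ can reach $\lambda\ge\theta$. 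So the sets $A_\xi$ must genuinely absorb reals born at stages later than $\xi$.

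For the construction, thin $c$ so that every $\xi\in c$ is a limit stage carrying a Cohen real $\dot c_\xi$ over $V_\xi$ available by stage $\xi+\omega$, and fix a name $\dot D$ for a dominating family. For $\xi\in c$ let $\dot A_\xi$ name $\bigcap_{f\in\dot D}[\dot\sigma^\xi_f]_\infty$, where each $\dot\sigma^\xi_f$ has $\hgt_{\dot\sigma^\xi_f}=f$ and is read generically off the reals $\seq{\dot c_\eta}{\eta\in c}$ according to a fixed bookkeeping; by \autoref{charSN}, $\Vdash_\pi\dot A_\xi\in\SNwf$. The heart of the proof is to verify $\Vdash_\pi |\set{\xi\in c}{\dot x\notin\dot A_\xi}|<\theta$ for every $\Por_\pi$-name $\dot x$ of a real: if some $p$ forced the contrary, one uses the $\cf(\pi)$-cc to pull out a ground-model $S\in[c]^{\theta}$ and a $p'\le p$ forcing, for each $\xi\in S$, that $\dot x\notin[\dot\sigma^\xi_{f_\xi}]_\infty$ for some $f_\xi\in\dot D$, hence a condition $p_\xi\le p'$ and a witness ``$\dot\sigma^\xi_{f_\xi}$ eventually avoids $\dot x$''; then applying the precaliber $\theta$ of $\Por_\pi$ (inherited from the iterands along the FS iteration) to $\set{p_\xi}{\xi\in S}$ gives a $\theta$-sized centered subfamily, which must clash with the genericity governing the $\dot\sigma^\xi_f$, a contradiction. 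The main obstacle is precisely this last point: one must design the bookkeeping assigning the Cohen reals of $c$ to pairs $(\xi,f)$ so that the event ``$\dot\sigma^\xi_f$ eventually avoids $\dot x$'' is never forced by a member of a $\theta$-sized centered set of conditions — this is the mechanism by which precaliber (as opposed to mere $\cf(\pi)$-cc) drives $\cov(\SNwf)$ all the way down to $\theta$, and it is the part requiring the genuine care. Once it is in place, \autoref{b2}(a) yields $\Cbf_{[\lambda]^{<\theta}}\leqT\Cbf_\SNwf^{\perp}$ and hence the stated conclusions.
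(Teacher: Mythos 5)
Your framework is right as far as it goes: the reduction via \autoref{b2}(a) to forcing a strongly $\theta$-$\Cbf_{\SNwf}^{\perp}$-unbounded family $\seq{\dot A_\xi}{\xi\in c}$ indexed by a final segment $c$ of size $\lambda$, the derivation of the ``in particular'' clause from Tukey monotonicity, and the diagnosis of why the naive family $\seq{\cantor\cap V_\xi}{\xi\in c}$ only reproves \autoref{b10}. But the entire content of the theorem sits in the step you explicitly defer (``the part requiring the genuine care''), and the mechanism you sketch for it cannot be made to work as described. Your sets $\dot A_\xi=\bigcap_{f\in\dot D}[\dot\sigma^\xi_f]_\infty$ are indeed forced to be in $\SNwf$ by \autoref{charSN} no matter how the $\dot\sigma^\xi_f$ are chosen; the issue is whether they are large enough, i.e.\ whether a real $\dot x$ born at a stage $\eta$ belongs to $\dot A_\xi$ for all but ${<}\theta$ many $\xi\in c\cap(\eta+1)$. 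Cohen genericity of $\dot c_\eta$ over $V_\eta$ yields exactly $\cantor\cap V_\eta\subseteq[\dot\sigma]_\infty$, plus membership of those later reals that happen to be Cohen over the model coding $\dot\sigma$; it yields no control over arbitrary later reals, and those are precisely the problematic ones. Hence there is no ``clash with the genericity governing the $\dot\sigma^\xi_f$'': a single condition in the tail of the iteration can force a new real to eventually avoid any fixed $\sigma$ lying in an earlier model, so a $\theta$-sized centered family of conditions, each forcing ``$\dot\sigma^{\xi}_{f_\xi}$ eventually avoids $\dot x$'', is in no way contradictory. No bookkeeping assigning Cohen reals to pairs $(\xi,f)$ in advance can repair this, and there is a further mismatch: $\dot\sigma^\xi_f$ must have height $f$ for every $f$ in a dominating family of the \emph{final} model, whereas the Cohen real it is read off appears at a fixed earlier stage, so for $f$ born later the resulting sequence is not generic over any model containing $f$.

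What is missing, then, is the actual point of entry of the precaliber hypothesis: it must be used to control the \emph{names} of the new reals directly (the covers $\sigma^\xi_f$ have to be produced in response to the conditions deciding $\dot x$, via the centered refinement of a $\theta$-sized set of such conditions and a counting argument on the finitely many possible values of $\dot x$ on the relevant blocks), rather than fixed generically in advance and then confronted with the names. This is the substance of the cited proofs of Pawlikowski and Cardona--Mej\'{\i}a, and it is entirely absent from your proposal; as written, the argument establishes only \autoref{b10}.
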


To force a lower bound of $\bfrak(\Rbf)$, we use:

\begin{theorem}[{\cite[Thm.~2.12]{CM22}}]\label{b12}
Let $\theta\geq\aleph_1$ be a regular cardinal, and let\/ $\Por_\pi=\seq{\Por_\xi,\Qnm_\xi}{\xi<\pi}$ be a FS iteration of $\theta$-cc posets with\/ $\cf(\pi)\geq\theta$. Assume that, for all $\xi<\pi$ and any $A\in[X]^{<\theta}\cap V_\xi$, there is some $\eta\geq\xi$ such that\/ $\Qnm_\eta$ adds an $R$-dominating real over~$A$. Then\/ $\Por_\pi$ forces $\theta\leq\bfrak(\Rbf)$, i.e.\ $\Rbf\leqT\Cbf_{[X]^{<\theta}}$.
\end{theorem}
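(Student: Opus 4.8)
The plan is to prove directly that $\Por_\pi$ forces every $F\in[X]^{<\theta}$ to be $\Rbf$-bounded; by \autoref{b2}(b) this is exactly the assertion $\Por_\pi\Vdash\Rbf\leqT\Cbf_{[X]^{<\theta}}$, i.e.\ $\Por_\pi\Vdash\theta\le\bfrak(\Rbf)$. So let $\dot F$ be a $\Por_\pi$-name for an element of $[X]^{<\theta}$.

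The first step is a reflection argument. A FS iteration of $\theta$-cc posets with $\theta$ regular uncountable is itself $\theta$-cc, so $\Por_\pi$ is $\theta$-cc. Viewing $\dot F$ through a name $\dot f\colon\dot\kappa\to X$ that enumerates it (with $\Vdash\dot\kappa<\theta$), a maximal antichain deciding $\dot\kappa$ has size ${<}\theta$, hence some $\kappa^*<\theta$ satisfies $\Vdash\dot\kappa\le\kappa^*$; and for each $i<\kappa^*$ a nice name for the real coding $\dot f(i)$ involves only countably many maximal antichains, each of size ${<}\theta$, hence only ${<}\theta$ conditions. So $\dot F$ is decided by a set of ${<}\theta$ conditions; these have finite supports, whose union $S$ has size ${<}\theta\le\cf(\pi)$ and is therefore bounded, $S\subseteq\xi$ for some $\xi<\pi$. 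Consequently $\dot F$ may be taken to be a $\Por_\xi$-name, so $\Por_\pi$ forces $\dot F\subseteq V_\xi$ (still of size ${<}\theta$).

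For the second step, work in $V_\xi$, put $A:=\dot F[\dot G_\xi]\in[X]^{<\theta}\cap V_\xi$, and apply the hypothesis to obtain $\eta$ with $\xi\le\eta<\pi$ such that $\Qnm_\eta$ adds an $\Rbf$-dominating real over $A$; thus $\Por_{\eta+1}$ forces that some $y\in Y$ satisfies $a\sqsubset y$ for every $a\in A$. Since $\Por_\pi$ forces $\dot F=A$ and $A\subseteq V_\xi\subseteq V_{\eta+1}\subseteq V_\pi$, the same $y$ witnesses that $\dot F$ is $\Rbf$-bounded in $V_\pi$. As $\dot F$ was an arbitrary name for a member of $[X]^{<\theta}$, we conclude $\Por_\pi\Vdash\theta\le\bfrak(\Rbf)$, equivalently $\Por_\pi\Vdash\Rbf\leqT\Cbf_{[X]^{<\theta}}$.

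The main obstacle is the reflection step: one must check that a $\Por_\pi$-name for a \emph{set} of size ${<}\theta$, not merely for a single real, still concentrates on fewer than $\theta$ conditions — this is where both the $\theta$-cc and the regularity of $\theta$ enter — and one must make sure the stage $\eta$ delivered by the hypothesis is ${<}\pi$, so that the bounding real genuinely lands in $V_\pi$. Decoding nice names for reals, the finiteness of supports along a FS iteration, and the boundedness in $\pi$ of a set of fewer than $\cf(\pi)$ ordinals are all routine.
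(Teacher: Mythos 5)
Your proof is correct and follows the same standard route as the cited argument from \cite{CM22}: reflect the name for a ${<}\theta$-sized subset of $X$ to some $V_\xi$ using the $\theta$-cc of the finite-support iteration, the regularity of $\theta$, and $\cf(\pi)\geq\theta$, then invoke the hypothesis at some $\eta\geq\xi$ to bound it, concluding via \autoref{b2}. The only point left implicit, which is harmless since $\Rbf$ is a Polish relational system, is that $x\sqsubset y$ is absolute between $V_{\eta+1}$ and $V_\pi$, so the dominating real found at stage $\eta+1$ still works in the final extension.
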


\begin{lemma}[{\cite[Lemma~4.5]{CM}}]\label{b13}
Assume that $\theta$ has uncountable cofinality. Let\/ $\seq{\Por_{\alpha}}{\alpha<\theta}$ be a\/ $\subsetdot$-increasing sequence of\/ $\cf(\theta)$-cc posets such that\/ $\Por_\theta=\limdir_{\alpha<\theta}\Por_{\alpha}$. If\/ $\Por_{\alpha+1}$ adds a Cohen real $\dot{c}_\alpha\in X$ over $V^{\Por_\alpha}$ for any $\alpha<\theta$, then\/ $\Por_{\theta}$ forces that\/ $\set{\dot{c}_\alpha}{\alpha<\theta}$ is a~strongly $\theta$-$\Rbf$-unbounded family, i.e.\ $\theta\leqT \Rbf$.
\end{lemma}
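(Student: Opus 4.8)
The plan is to combine two ingredients: the standard fact that a Cohen real is $\Rbf$-unbounded over any ground model it is generic over, and the fact that along the direct limit $\Por_\theta$ every real of the extension already appears at a bounded stage. First I would record the following, for the fixed Prs $\Rbf=\la X,Y,{\sqsubset}\ra$: if $N$ is a transitive model of enough of $\thzfc$ containing the (ground-model, hence absolute) parameters of $\Rbf$, if $y\in Y\cap N$, and if $c\in X$ is Cohen over $N$, then $c\not\sqsubset y$. This is immediate from \autoref{b3}~\ref{b3:3}: the section $(\sqsubset)^{y}=\bigcup_{n<\omega}(\sqsubset_{n})^{y}$ is an increasing union of closed nowhere dense subsets of $X$, so it is a meager Borel set with a code in $N$, and a Cohen real over $N$ lies in no such set; equivalently this is the Tukey connection $\la X,\Mwf(X),{\in}\ra\leqT\Rbf$ from \autoref{b4} together with the characterization of Cohen reals recalled in \autoref{b0}~\ref{b0:5}. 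Note also that, Cohenness over a fixed model being upward absolute, each $\dot c_\alpha$ remains Cohen over $V_\alpha=V^{\Por_\alpha}$ in the full extension $V^{\Por_\theta}$.

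The second step is the reflection. Since $\Por_\theta=\limdir_{\alpha<\theta}\Por_\alpha$ is $\cf(\theta)$-cc and $\cf(\theta)$ is regular and uncountable, every $\Por_\theta$-name for an element of a Polish space is forced to equal a $\Por_\beta$-name for some $\beta<\theta$: a nice name for such a real is assembled from countably many maximal antichains, each of size ${<}\cf(\theta)$, so it mentions fewer than $\cf(\theta)$ conditions; this set of ${<}\cf(\theta)$ ordinals below $\theta$ is bounded by some $\beta<\theta$, and since the $\Por_\alpha$ are $\subsetdot$-increasing all these conditions lie in $\Por_\beta$, while $\Por_\beta\subsetdot\Por_\theta$ guarantees that the antichains involved remain antichains in $\Por_\beta$. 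Hence, writing $G_\beta=G\cap\Por_\beta$ for a $\Por_\theta$-generic $G$, every $y\in Y$ in $V[G]$ actually lies in $Y\cap V[G_\beta]$ for some $\beta<\theta$; here I only need that $y\in V[G_\beta]$ and that $(\sqsubset)^{y}$ is meager, the latter being absolute once it holds in $V[G]$ because $y\in Y$ there.

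Finally, given such $G$ and $y\in Y$, fix $\beta<\theta$ as above. For every $\alpha$ with $\beta\le\alpha<\theta$ the real $\dot c_\alpha[G]$ is Cohen over $V_\alpha\supseteq V[G_\beta]$, hence Cohen over $V[G_\beta]$, so by the first step $\dot c_\alpha[G]\not\sqsubset y$. Therefore $\set{\alpha<\theta}{\dot c_\alpha[G]\sqsubset y}\subseteq\beta$, a set of size ${<}\theta$ as $\theta$ is a cardinal; since $y\in Y$ was arbitrary, $\set{\dot c_\alpha}{\alpha<\theta}$ is strongly $\theta$-$\Rbf$-unbounded in $V[G]$, and \autoref{b2} (applied with $I=\theta$) yields $\Cbf_{[\theta]^{<\theta}}\leqT\Rbf$, i.e.\ $\theta\leqT\Rbf$. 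The one point needing care — and the only place where uncountable cofinality of $\theta$, rather than mere uncountability, is used — is the reflection in the second step: one must genuinely verify that $\Por_\theta$ has the $\cf(\theta)$-cc and that the reduced name is a legitimate $\Por_\beta$-name; with that in hand, the first and third steps are routine.
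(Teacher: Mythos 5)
The paper itself gives no proof of \autoref{b13} (it is quoted from \cite[Lemma~4.5]{CM}), and your argument is the standard one used there: reflect any $y\in Y$ of the final extension to an intermediate model $V[G_\beta]$, $\beta<\theta$, via the chain condition and nice names, then observe that each $\dot c_\alpha$ with $\alpha\geq\beta$ is Cohen over $V[G_\beta]$ and hence avoids the meager set $({\sqsubset})^{y}$, so $\set{\alpha<\theta}{\dot c_\alpha\sqsubset y}\subseteq\beta$. The one caveat --- which you correctly single out yourself --- is that the reflection step uses the $\cf(\theta)$-cc of the limit $\Por_\theta$ itself and not merely of the individual $\Por_\alpha$'s; this is not literally among the stated hypotheses, but it holds in every application in the paper, where $\Por_\theta$ is a ccc finite-support or matrix iteration.
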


\section{Forcing the additivity of \texorpdfstring{$\Nwf$}{} small}\label{sec:s4}

We present a property that helps us to keep the additivity of the null ideal small under certain forcing extensions. Indeed, we present a novel kind of linkedness property that falls between being $\sigma$-centered and $\sigma$-linked, which is not as strong as $(\rho, \varrho)$-linked that helps us to keep this cardinal small under generic extensions.

The next definition gives a new type of ccc forcing notions not increasing the additivity of the null ideal.

\begin{definition}
\label{d1}
Let $\Por$ be a~forcing notion and let $\bar\rho=\seq{\rho_n}{n\in\omega}$ be a sequence of functions $\rho_n\colon\omega\to\omega$ such that $\lim_{k\to\infty}\rho_n(k)=\infty$, $\rho_n(k)\leq k$, $\rho_n(k+1)\geq2$ and $\rho_n\geq\rho_{n+1}$.
\begin{enumerate}[label=\rm(\arabic*)]
\item\label{d1:1}
Say that $\Por$ is \emph{$\sigma$-$\bar\rho$-linked} if there is a sequence $\set{P_n}{n\in\omega}$ such that:\smallskip
\begin{enumerate}[label=\rm(\alph*)]
\item\label{d1:a}
$\bigcup_{n\in\omega}P_n$ is dense in $\Por$ and
\item\label{d1:b}
for all $n$, $k<\omega$, if $\set{p_i}{i<k}\subseteq P_n$, then there is subset $Q$ of $\set{p_i}{i<k}$ of size $\rho_n(k)$ that has a common upper bound.
\end{enumerate}

\item
Say that $\Por$ is \emph{$\sigma$-$\bar\rho{\star}$-linked},
if there is a sequence $\seq{P_n}{n\in\omega}$ such that:\smallskip
\begin{enumerate}[label=\rm(\alph*)]
\item $\Por=\bigcup_{n\in\omega}P_n$ and
\item $\forall n,k\in\omega\
\forall\seq{p_i}{i<k}\in{}^kP_n\
\exists u\in[k]^{\rho_n(k)}$ the set
$\set{p_i}{i\in u}$ has an upper bound.
\end{enumerate}
\item We say that $\Por$ is \emph{$\sigma{\uparrow}$-$\bar\rho$-linked} or \emph{$\sigma{\uparrow}$-$\bar\rho{\star}$-linked}, respectively,
if moreover, $P_n\subseteq P_{n+1}$ for all $n\in\omega$.
\end{enumerate}
\end{definition}

The following result indicates the relationship between the prior properties.

\begin{fact}
\
\begin{enumerate}[beginpenalty=10000,label=\rm(\arabic*)]
    \item Any $\sigma$-$\bar\rho{\star}$-linked poset is $\sigma$-$\bar\rho$-linked.
    \item Any $\sigma{\uparrow}$-$\bar\rho$-linked poset is $\sigma$-$\bar\rho$-linked.
\end{enumerate}
\end{fact}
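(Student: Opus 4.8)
The plan is to observe that in each case the family witnessing the stronger notion serves \emph{verbatim} as a family witnessing $\sigma$-$\bar\rho$-linkedness, so both items of the Fact are immediate once the clauses of \autoref{d1} are lined up against one another.

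For item~(1), I would take $\seq{P_n}{n\in\omega}$ witnessing $\sigma$-$\bar\rho{\star}$-linkedness of $\Por$ as in part~(2) of \autoref{d1}, so $\Por=\bigcup_{n\in\omega}P_n$ and every $\seq{p_i}{i<k}\in{}^kP_n$ admits some $u\in[k]^{\rho_n(k)}$ with $\set{p_i}{i\in u}$ having an upper bound. Then $\bigcup_{n\in\omega}P_n=\Por$ is trivially dense in $\Por$, so clause~\ref{d1:a} holds. For clause~\ref{d1:b}, given $n,k<\omega$ and $\set{p_i}{i<k}\subseteq P_n$, I regard $\seq{p_i}{i<k}$ as an element of ${}^kP_n$ (repeating entries if the underlying set has size ${<}k$), apply part~(2) of \autoref{d1} to get $u\in[k]^{\rho_n(k)}$ with $\set{p_i}{i\in u}$ bounded, and set $Q:=\set{p_i}{i\in u}$, which is the required subfamily indexed by a set of size $\rho_n(k)$. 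Hence $\seq{P_n}{n\in\omega}$ witnesses that $\Por$ is $\sigma$-$\bar\rho$-linked.

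For item~(2), by part~(3) of \autoref{d1} the assertion that $\Por$ is $\sigma{\uparrow}$-$\bar\rho$-linked \emph{already includes} the assertion that $\Por$ is $\sigma$-$\bar\rho$-linked, via a witnessing family that additionally satisfies $P_n\subseteq P_{n+1}$ for all $n$; forgetting the monotonicity requirement, the same family still works, so there is nothing further to prove. The step most in need of care here — and it is the only one — is the purely cosmetic mismatch between the set-indexed formulation of clause~\ref{d1:b} and the function-indexed formulation in part~(2) of \autoref{d1}, which is absorbed by the padding remark above; there is no genuine obstacle. If desired, the same two-line argument also yields the analogous trivial implications (e.g.\ $\sigma{\uparrow}$-$\bar\rho{\star}$-linked implies both $\sigma{\uparrow}$-$\bar\rho$-linked and $\sigma$-$\bar\rho{\star}$-linked).
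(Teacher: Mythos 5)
Your proposal is correct and coincides with what the paper intends: the Fact is stated without proof precisely because both implications follow by unwinding \autoref{d1} exactly as you do — for (1) the witnessing sequence with $\Por=\bigcup_n P_n$ is in particular dense and the sequence-indexed linking clause yields the required subfamily $Q=\set{p_i}{i\in u}$ of size $\rho_n(k)$, and for (2) the $\sigma{\uparrow}$ notion is by definition the $\sigma$-$\bar\rho$-linked notion plus monotonicity, which one simply forgets. Your padding remark about non-injective enumerations addresses a harmless imprecision in the paper's own phrasing and does not affect correctness.
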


We now present some examples. Think of random forcing $\Bor$ as the family of Borel sets
$A\subseteq\cantor$ of positive measure ordered by inclusion.
The set
\[
\set{A\in\Bor}
{\forall s\in{}^{<\omega}2\bsp\colon A\cap[s]\ne\emptyset\Rightarrow\mu(A\cap[s])>0}
\]
is a~dense subset of~$\Bor$ because
\[\Lb(A)=\Lb(A\smallsetminus\bigcup\set{[s]}{s\in{}^{<\omega}2\setand\Lb(A\cap[s])=0}).\]

The random perfect set forcing is the set
\[
\PBor=
\set{(n,A)\in\omega\times\Bor}
{\forall s\in{}^{<\omega}2\colon A\cap[s]\ne\emptyset\Rightarrow\Lb(A\cap[s])>0}
\]
ordered by $(n',A')\geq(n,A)$ if $n'\geq n$, $A'\subseteq A$, and
$A'\frestr n=A\frestr n$ (where $A\frestr n=\set{x\frestr n}{x\in A}$).
Clearly, the forcing~$\PBor$ adds a~perfect set of random reals.

\begin{lemma}\label{example-barrho}
\
\begin{enumerate}[beginpenalty=10000,label=\rm(\arabic*)]
\item\label{exa:1}
$\sigma$-centered posets are $\sigma$-$\bar\rho$-linked for $\rho_n(k)=k$.
\item\label{exa:2}
Random forcing\/~$\Bor$ is $\sigma$-$\bar\rho$-linked for some~$\bar\rho$.
\item\label{exa:3}
Random perfect set forcing\/ $\PBor$ is $\sigma$-$\bar\rho$-linked
for some~$\bar\rho$.
\end{enumerate}
\end{lemma}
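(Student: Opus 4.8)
The plan is to exhibit, in each case, a decomposition $\set{P_m}{m\in\omega}$ with $\bigcup_m P_m$ dense in the poset, together with an admissible $\bar\rho$. Item~\ref{exa:1} is immediate: if $\Por=\bigcup_m P_m$ witnesses $\sigma$-centeredness, then each $P_m$ is $k$-linked for every $k$, so any $\seq{p_i}{i<k}\in{}^{k}P_m$ has a common upper bound, and $\set{P_m}{m\in\omega}$ with $\rho_m=\id$ does the job. For \ref{exa:2} and \ref{exa:3} the only genuinely non-formal point is forcing the clause $\lim_{k}\rho_m(k)=\infty$: this rules out the naive idea of letting the $P_m$ be the conditions lying ``close to a fixed clopen set'' (there the linkedness caps at a constant) and makes the following elementary estimate indispensable. \emph{If $(Z,\Lb)$ is a finite measure space and $A_0,\dots,A_{k-1}\subseteq Z$ are measurable with $\Lb(A_i)\ge\epsilon$ for all $i<k$, then some $u\subseteq k$ with $|u|\ge\lceil k\epsilon/\Lb(Z)\rceil$ has $\Lb(\bigcap_{i\in u}A_i)>0$}: integrate $\sum_{i<k}\mathbf{1}_{A_i}$, note that the level set $\set{x\in Z}{\sum_{i<k}\mathbf{1}_{A_i}(x)\ge\lceil k\epsilon/\Lb(Z)\rceil}$ has positive measure, and use that only finitely many sets $u$ occur.

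For \ref{exa:2} I would enumerate all pairs $(u,\epsilon)$ with $u\in2^{<\omega}$ and $\epsilon\in\Q\cap(0,\tfrac12)$ and put $P_{(u,\epsilon)}:=\set{A\in\Bor}{A\subseteq[u]\text{ and }\Lb(A)>(1-\epsilon)\Lb([u])}$. By the Lebesgue density theorem every $A\in\Bor$ has a subset $A\cap[u]$ lying in some $P_{(u,1/3)}$, so $\bigcup P_{(u,\epsilon)}$ is dense; and applying the estimate inside $[u]$ to $k$ members of $P_{(u,\epsilon)}$ (their intersection over $i\in u$ is a positive-measure subset of $[u]$ extending all of them) shows that $\rho_{(u,\epsilon)}(k):=\lceil k(1-\epsilon)\rceil$ works for that piece: it is $\le k$, tends to $\infty$, and is $\ge2$ for $k\ge2$ since $\epsilon<\tfrac12$. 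Finally, fixing an enumeration $\seq{P_m}{m\in\omega}$ of the pieces and replacing $\rho_{P_m}$ by $\rho_m:=\min_{j\le m}\rho_{P_j}$ restores the monotonicity $\rho_m\ge\rho_{m+1}$ while keeping the remaining clauses (a finite minimum of functions going to infinity goes to infinity). Hence $\Bor$ is $\sigma$-$\bar\rho$-linked for this $\bar\rho$.

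For \ref{exa:3} I would run the same argument fibrewise over the stem. For $n\in\omega$, $\emptyset\ne t\subseteq2^n$ and $\delta\in\Q\cap(0,\tfrac1{2|t|})$, let $P_{(n,t,\delta)}:=\set{(n,A)\in\PBor}{A\frestr n=t\text{ and }\Lb(A\cap[s])>(1-\delta)2^{-n}\text{ for all }s\in t}$. For density: given $(n,A)$ with $t_0:=A\frestr n$, fix a rational $\delta'\in(0,\tfrac1{2|t_0|})$, use Lebesgue density to choose for each $s\in t_0$ one node $s'\supseteq s$ of a common large length $n'$ with $\Lb(A\cap[s'])>(1-\delta')2^{-n'}$, and take the ``nice part'' $A'$ of $\bigcup_s(A\cap[s'])$; then $(n',A')$ extends $(n,A)$ and lies in the corresponding piece. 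For the growth: given $k$ members $(n,A_i)$ of a piece, apply the estimate inside each $[s]$ ($s\in t$) to get $u_s\subseteq k$ with $|u_s|\ge\lceil k(1-\delta)\rceil$ and $\Lb(\bigcap_{i\in u_s}A_i\cap[s])>0$; then $u:=\bigcap_{s\in t}u_s$ satisfies $|u|\ge|t|\lceil k(1-\delta)\rceil-(|t|-1)k\ge k(1-|t|\delta)$, so $\Lb(\bigcap_{i\in u}A_i\cap[s])>0$ for every $s\in t$, whence the nice part of $\bigcap_{i\in u}A_i$ extends each $(n,A_i)$, $i\in u$, i.e. is a common upper bound of $\set{(n,A_i)}{i\in u}$. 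Thus $\rho_{(n,t,\delta)}(k):=\lceil k(1-|t|\delta)\rceil$ works, and as $|t|\delta<\tfrac12$ it is again $\le k$, $\ge2$ for $k\ge2$, and tends to $\infty$; running minima over an enumeration of the pieces then give the required $\bar\rho$. The step I expect to be the main obstacle is precisely this fibre-coordination — extracting a \emph{single} index set $u$ that simultaneously serves all of the finitely many branches of the stem — which is what compels the pieces to consist of conditions that are ``almost full on every fibre'' (rather than merely bounded below in total measure) and what dictates the relation $\delta<\tfrac1{2|t|}$ between the fullness level and the width of the stem; everything else is the routine bookkeeping of matching the constraints \autoref{d1} places on $\bar\rho$.
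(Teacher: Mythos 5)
Your proof is correct, and it reaches the same combinatorial heart as the paper --- the first‑moment counting fact that $k$ sets of measure $\geq\epsilon$ in a space of measure $\Lb(Z)$ admit roughly $k\epsilon/\Lb(Z)$ many with a positive‑measure intersection (the paper proves exactly this as its Claim about $P_n=\set{A\in\Bor}{\Lb(A)\geq1/c_n}$, by summing the quantities $x_a=\Lb(\set{x}{\set{i}{x\in A_i}=a})$ rather than integrating $\sum_i\mathbf{1}_{A_i}$) --- but your decompositions differ. For $\Bor$ the paper simply partitions \emph{all} of $\Bor$ by a lower bound $\Lb(A)\geq1/c_n$ on the total measure, getting $\rho_n(k)=\lceil k/c_n\rceil$ with no appeal to the Lebesgue density theorem; your ``almost full in a cylinder'' pieces only form a dense set (which Definition~\ref{d1}\ref{d1:1}\ref{d1:a} permits) and buy a linear $\rho$ with slope near $1$, at the cost of invoking density. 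The divergence is larger for $\PBor$: the paper's pieces $Q_n$ fix only an upper bound on the stem length and a lower bound $1/c_n$ on each fibre measure, so it must first pigeonhole over the $\leq2^{2^n+1}$ possible traces and then \emph{iterate} the counting lemma once per branch of the stem, yielding the weak but sufficient bound $\rho_n^{2^n}(\lfloor k/2^{2^n+1}\rfloor)$; you instead fix the stem length and trace inside each piece and demand near‑fullness $\Lb(A\cap[s])>(1-\delta)2^{-n}$ with $|t|\delta<\tfrac12$ on every fibre, which lets a single application per fibre followed by intersecting the index sets produce one $u$ of size $\geq k(1-|t|\delta)\geq k/2$. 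Your version is quantitatively much stronger and avoids the iteration, though your remark that near‑fullness is ``indispensable'' is only true for your single‑shot intersection argument: the paper shows that a mere lower bound on the fibre measures suffices if one is willing to shrink the index set $|t|$ times in succession. Both arguments are equally valid witnesses for the (quite permissive) definition of $\sigma$-$\bar\rho$-linkedness, and your running‑minimum device for restoring $\rho_m\geq\rho_{m+1}$ after re‑enumerating the pieces is a legitimate piece of bookkeeping that the paper itself glosses over.
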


\begin{proof}
We just prove~\ref{exa:2} and~\ref{exa:3} because~\ref{exa:1} is obvious.

\ref{exa:2}: Let $\seq{c_n}{n\in\omega}$ be any sequence of positive reals
with\/ $\lim_{n\to\infty}c_n=\infty$ and let
$\rho_n(k)=m$, if $(m-1)c_n<k\leq mc_n$ and $m,n,k\in\omega$.
Denote
$P_n=\set{A\in\Bor}{\Lb(A)\geq1/c_n}$.
Then $\Bor=\bigcup_{n\in\omega}P_n$.
It is enough to prove the next claim.

\begin{clm}\label{rho-random}
If\/ $\set{A_i}{i<k}\subseteq P_n$, then there is
$a\in[k]^{\rho_n(k)}$ with\/ $\Lb(\bigcap_{i\in a}A_i)>0$.
\end{clm}

Assume that $m\geq1$,
$\seq{A_i}{i<k}$ is a~sequence of Borel sets in~$\cantor$ of measure~$1/c_n$,
and for every $b\in[k]^{m+1}$, $\Lb(\bigcap_{i\in a}A_i)=0$.
For $a\subseteq k$ and $j\leq k$ denote
$x_a=\Lb(\set{x\in\cantor}{\set{i<k}{x\in A_i}=a})$
and $z_j=\sum_{a\in[k]^j}x_a$.
Then
\begin{itemize}
\item
$\sum_{j\leq m}z_j=\sum_{a\in[k]^{\leq m}}x_a=1$,
\item
$\forall i<k$
$\sum\set{x_a}{i\in a\in[k]^{\leq m}}=1/c_n$ and consequently,
\item
$k/c_n=\sum_{j=1}^mj\sum_{a\in[k]^j}x_a=\sum_{j\leq m}jz_j\leq\sum_{j\leq m}mz_j=m$.
\end{itemize}
Therefore, $k\leq mc_n$.
This finishes the proof of the claim.

\ref{exa:3}: Let $\seq{c_n}{n\in\omega}$ and $\bar\rho=\seq{\rho_n}{n\in\omega}$ be the same as
in the proof of~(2).
Define $\bar\rho^*=\seq{\rho^*_n}{n\in\omega}$ by
$\rho^*_n(k)=\rho_n{}^{2^n}(\lfloor k/2^{2^n+1}\rfloor)$ where
for a~function~$f$, $f^0(x)=x$ and $f^{n+1}(x)=f(f^n(x))$.
We show that $\PBor$~is $\sigma$-$\bar\rho^*$-linked.

For $p=(n,A)\in\PBor$ we write $n_p=m$, $A_p=A$, and
$S_p=\set{s\in{}^{n_p}2}{A_p\cap[s]\ne\emptyset}$.
Clearly $\PBor=\bigcup_{n\in\omega}Q_n$ where
\[
Q_n=\set{p\in\PBor}{n_p\leq n\setand\forall s\in S_p\bsp\mu(A_p\cap[s])\geq1/c_n}.
\]

Assume $\set{p_i}{i<k}\subseteq Q_n$ then there is
$S\in\bigcup_{m\leq n}\Pwf({}^m2)$ such that
the set $\set{i<k}{S_{p_i}=S}$ has cardinality~$\geq k'$ where
$k'=\lfloor k/2^{2^n+1}\rfloor$.
Since $|S|\leq2^n$, by applying~\autoref{rho-random} for every $s\in S$
we find $b\subseteq a$ of cardinality $\rho_n{}^{2^n}(k')=\rho^*_n(k)$ such that the set
$\set{p_i}{i\in b}$ has a~common extension in~$\PBor$.

The proof of~\autoref{example-barrho} is complete.
\end{proof}

Our linkedness property is related to the intersection number, which was originally introduced by Kelley~\cite{Kelley59} (see also~\cite[Sec.~4]{CMU} and~\cite{U23}).

\begin{definition}
Let $\Por$ be a forcing notion and $Q\subseteq\Por$.
\begin{enumerate}
\item
For a finite sequence $\bar q=\seq{q_i}{i<n}\in{}^n\Por$, we define
\[
i_\ast^\Por(\bar q)=\max
\set{|F|}{F\subseteq n\wedge\set{q_i}{i\in F}
\text{ has a lower bound in $\Por$}}.
\]

\item
The \emph{intersection number of $Q$ in $\Por$}, denoted by
$\Int^\Por(Q)$, is defined by
\[
\Int^\Por(Q)=\inf\largeset
{\frac{i_\ast^\Por(\bar q)}{n}}
{\bar q\in{}^nQ\wedge n\in\omega\smallsetminus\{0\}}.
\]
\end{enumerate}
\end{definition}

\begin{lemma}\label{d14}
Assume that\/ $\Por=\bigcup_{n}Q_n$ where\/ $\Int^\Por(Q_n)\geq\frac{1}{n}$ for each $n$. Then\/ $\Por$ is $\sigma$-$\bar\rho$-linked where $\rho_n(k)=\frac{k}{n}$.
\end{lemma}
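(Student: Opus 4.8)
The plan is to show directly that the decomposition $\Por=\bigcup_n Q_n$ witnesses $\sigma$-$\bar\rho$-linkedness for $\rho_n(k)=\lfloor k/n\rfloor$ (adjusting by floors so that $\rho_n$ lands in $\omega$). First I would check that this choice of $\bar\rho$ satisfies the requirements of \autoref{d1}: $\lim_{k\to\infty}\lfloor k/n\rfloor=\infty$, $\lfloor k/n\rfloor\leq k$, and monotonicity $\rho_n\geq\rho_{n+1}$ are immediate; the condition $\rho_n(k+1)\geq 2$ will only hold for $k$ large, so strictly speaking one should pass to a tail or replace $\rho_n(k)$ by $\max\{\lfloor k/n\rfloor,2\}$ for the small values of $k$ — I would note this minor normalization and not dwell on it, since changing finitely many values of each $\rho_n$ does not affect the property. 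Condition \ref{d1:a} of \autoref{d1} holds because $\bigcup_n Q_n=\Por$ is certainly dense in $\Por$.

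The heart of the argument is condition \ref{d1:b}: given $n,k$ and $\{p_i\}_{i<k}\subseteq Q_n$, I need a subset of size $\rho_n(k)$ with a common upper bound. This is exactly what the hypothesis $\Int^\Por(Q_n)\geq 1/n$ delivers. Unwinding the definition of the intersection number applied to the finite sequence $\bar p=\seq{p_i}{i<k}\in{}^k Q_n$: we get $i_\ast^\Por(\bar p)/k\geq\Int^\Por(Q_n)\geq 1/n$, hence $i_\ast^\Por(\bar p)\geq k/n$, and since $i_\ast^\Por(\bar p)$ is an integer, $i_\ast^\Por(\bar p)\geq\lceil k/n\rceil\geq\lfloor k/n\rfloor=\rho_n(k)$. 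By definition of $i_\ast^\Por$, there is $F\subseteq k$ with $|F|=i_\ast^\Por(\bar p)$ such that $\set{p_i}{i\in F}$ has a lower bound in $\Por$; shrinking $F$ to size exactly $\rho_n(k)$ still leaves a set with a lower bound. The only thing to be careful about is the direction of the order: the paper's forcing convention has $q\leq p$ meaning $q$ is \emph{stronger}, and \autoref{d1}\ref{d1:b} asks for a ``common upper bound,'' while the intersection-number definition speaks of a ``lower bound.'' These are the same object under the stronger-is-smaller convention — a condition below all the $p_i$ in the $\leq$ order is simultaneously a common extension and, in the terminology of \autoref{d1}, the required common upper bound. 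I would spell this compatibility out in one sentence so there is no ambiguity.

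I do not anticipate a serious obstacle here; the lemma is essentially a restatement of the definition of the intersection number in the language of \autoref{d1}. The one genuine point requiring care is the bookkeeping with floors and ceilings and the small-$k$ values where $\lfloor k/n\rfloor$ fails $\rho_n(k+1)\geq 2$, which I would handle once and for all by the remark that \autoref{d1} is insensitive to finite modifications of the functions $\rho_n$ (equivalently, by working with $\rho_n(k)=\max\{\lfloor k/n\rfloor,2\}$ from the start and noting $i_\ast^\Por(\bar p)\geq 2$ trivially whenever $k\geq 2$ since any single condition, hence any pair sharing a lower bound, witnesses it — or more carefully, using that $\Int^\Por(Q_n)>0$ forces $i_\ast^\Por(\bar p)\geq 2$ for $k$ not too small). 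With those conventions fixed, the proof is a two-line computation followed by the extraction of the subset $F$.
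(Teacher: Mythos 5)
Your proof is correct and follows essentially the same route as the paper's: unwind the definition of $\Int^\Por(Q_n)$ on the sequence $\bar p$ to get $i_\ast^\Por(\bar p)\geq k/n$ and then extract a subset of size $\rho_n(k)$ with a common bound. Your extra care about floors, the small-$k$ requirement $\rho_n(k+1)\geq 2$ from \autoref{d1}, and the upper/lower bound terminology is in fact more scrupulous than the paper's own two-line argument.
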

\begin{proof}
It suffices to prove that for all $n$, $k<\omega$, if $\set{p_i}{i<k}\subseteq Q_n$, then there is subset $Q$ of $\set{q_i}{i<k}$ of size $\rho_n(k)$ that has a common upper bound. Assume that $\set{p_i}{i<k}\subseteq Q_n$. Then $\frac{i_\ast^\Por(\bar q)}{k}\geq\frac{1}{n}$, so $i_\ast^\Por(\bar q)\geq\frac{k}{n}$. Therefore, $\set{q_i}{i<\rho_n(k)}$ has a lower bound.
\end{proof}

The following shows that poset $\Qor_f$ is $\sigma$-$\bar\rho$-linked.

\begin{lemma}\label{Qf-barrho}
$\Qor_f$ is $\sigma$-$\bar\rho$-linked for some $\bar\rho$.
\end{lemma}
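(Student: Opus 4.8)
The plan is to split $\Qor_f$ according to the \emph{stem} $(\sigma,N)$ of a condition $(\sigma,N,F)$---there being only countably many possible stems---and then, inside each piece, to combine conditions by first pigeonholing on their $F$-coordinate and then extending $\sigma$ with a Hall-type scheduling argument. Concretely, fix an enumeration $\seq{(\sigma^n,N^n)}{n\in\omega}$ of all pairs with $|\sigma^n(i)|=f(i)$ for $i<|\sigma^n|$ and $|\sigma^n|\le(N^n)^2$, and put
\[P_n:=\set{(\sigma^n,N^n,F)\in\Qor_f}{|F|\le N^n}.\]
Then $\bigcup_{n\in\omega}P_n=\Qor_f$, so clause~\ref{d1:a} of \autoref{d1}\ref{d1:1} is automatic; everything is in clause~\ref{d1:b}.

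The combinatorial core is the following claim. \emph{Suppose $(\sigma,N,F_1),\dots,(\sigma,N,F_m)$ all have stem $(\sigma,N)$ and $\set{\tau\frestr L}{\tau\in F_j}$ is one and the same set $S$ for all $j\le m$, where $L\ge mN$; then these conditions have a common lower bound in $\Qor_f$.} For the proof let $F':=\bigcup_{j\le m}F_j$, so $|F'|\le mN\le L$ and $\set{\tau\frestr L}{\tau\in F'}=S$; hence for every $i<L$ the set $V_i:=\set{\tau(i)}{\tau\in F'}$ has at most $|S|\le N$ elements. Consider the bipartite graph with left vertices the pairs $(i,v)$, $N\le i<L$, $v\in V_i$, right vertices the integers in $[\,|\sigma|,L^2)$, and $(i,v)$ adjacent to $n$ exactly when $n\le(i+1)^2-1$ (so that $f(n)\le f((i+1)^2)=|v|$). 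Hall's condition holds: a left-set with greatest index $i^*$ has at most $(i^*-N+1)\cdot N$ vertices, whereas its neighbourhood $[\,|\sigma|,(i^*+1)^2-1]$ has size $(i^*+1)^2-|\sigma|\ge(i^*+1)^2-N^2=(i^*+1-N)(i^*+1+N)\ge(i^*-N+1)\cdot N$. Let $\pi$ be a matching saturating the left side, and define $\sigma'\supseteq\sigma$, of length $|\sigma'|\le L^2$, by setting $\sigma'(\pi(i,v)):=v\frestr f(\pi(i,v))$ on the matched coordinates and $\sigma'(n):=0^{f(n)}$ elsewhere in $[\,|\sigma|,|\sigma'|)$. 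Then $(\sigma',L,F')\in\Qor_f$ (as $|F'|\le L$ and $|\sigma'|\le L^2$), and for each $j\le m$, $\tau\in F_j$ and $i\in\{N,\dots,L-1\}$ we have $\tau(i)\in V_i$ and $\sigma'(\pi(i,\tau(i)))=\tau(i)\frestr f(\pi(i,\tau(i)))\subseteq\tau(i)$, so $(\sigma',L,F')\le(\sigma,N,F_j)$, proving the claim. (The same computation with $m=2$, no agreement hypothesis, $N'=2N$, and the cruder bound $|V_i|\le|F'|\le2N$, shows that any two members of a single $P_n$ are compatible.)

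It remains to read off $\bar\rho$. Fix $n$, write $N=N^n$, and for $L\in\omega$ let $T_L$ be the (finite) number of subsets of size $\le N$ of $\prod_{i<L}2^{f((i+1)^2)}$. Given $\set{p_l}{l<k}\subseteq P_n$ with $p_l=(\sigma^n,N,F_l)$, the assignment $l\mapsto\set{\tau\frestr L}{\tau\in F_l}$ takes at most $T_L$ values, so some value is shared by at least $\lceil k/T_L\rceil$ of the $p_l$; applying the claim to $m:=\min(\lceil k/T_L\rceil,\lfloor L/N\rfloor)$ of these (note $mN\le L$) produces $m$ of them with a common lower bound. Thus clause~\ref{d1:b} holds in $P_n$ with $\tilde\rho_n(k):=\max\bigl(r_k,\ \max_{L\in\omega}\min(\lceil k/T_L\rceil,\lfloor L/N\rfloor)\bigr)$, where $r_k=2$ for $k\ge2$ and $r_k=0$ for $k\le1$. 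One checks $\tilde\rho_n(k)\le k$, $\tilde\rho_n(k)\ge2$ for $k\ge2$, and $\lim_{k\to\infty}\tilde\rho_n(k)=\infty$ (given $M$, fix $L$ with $\lfloor L/N\rfloor\ge M$; then $\tilde\rho_n(k)\ge M$ once $k\ge M\,T_L$). Finally set $\rho_n(k):=\min_{m\le n}\tilde\rho_m(k)$: passing to the pointwise minimum only weakens clause~\ref{d1:b}, which thus still holds, and now $\rho_n\ge\rho_{n+1}$ as well. Since $\rho_n(k)\le k$, $\rho_n(k)\ge2$ for $k\ge2$, and $\lim_k\rho_n(k)=\infty$ (a minimum of finitely many functions tending to infinity), the sequence $\bar\rho=\seq{\rho_n}{n\in\omega}$ satisfies the hypotheses of \autoref{d1}, and $\Qor_f$ is $\sigma$-$\bar\rho$-linked.

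The step demanding genuine care is the combinatorial claim: the coordinates of $\sigma'$ must be scheduled so that, simultaneously for all of the (up to $mN$) sequences in $F'$, every index $i\in\{N,\dots,L-1\}$ is hit. The agreement hypothesis is precisely what keeps the number of distinct values to hit at each such $i$ down to $\le N$, and the constraint $|\sigma'|\le(N')^2$ built into $\Qor_f$ is exactly what lets the supply of coordinates below $(i^*+1)^2$ beat the demand. By contrast, within a fixed $P_n$ only boundedly many conditions are ever jointly combinable, so the pigeonhole step---which promotes ``$k$ conditions'' to ``$\ge\lceil k/T_L\rceil$ conditions agreeing up to a level $L$ that may be taken to grow with $k$''---is what makes $\lim_k\rho_n(k)=\infty$ attainable.
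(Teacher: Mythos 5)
Your proof is correct, and at the strategic level it matches the paper's: decompose $\Qor_f$ into countably many cells determined by a stem, pigeonhole within a cell on a finite trace of the $F$-coordinate so that many conditions agree on it, and then extend $\sigma$ so that each new coordinate discharges one requirement $\sigma'(n)\subseteq\tau(i)$, the supply of coordinates below $(i+1)^2$ beating the demand. The differences are in execution. First, your cells fix the entire stem $(\sigma^n,N^n)$, so $\bigcup_n P_n$ is all of $\Qor_f$ and you avoid one pigeonhole; the paper instead takes $Q_n=\set{p}{N_p=|F_p|=n}$ (dense by \autoref{c6}) and pigeonholes separately over the finitely many $\sigma_p$'s. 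Second, where the paper schedules the new coordinates of $\sigma'$ by the explicit formula $\sigma'(n^2+n(i-n)+j)=\varphi_j(i)\frestr f(n^2+n(i-n)+j)$ and verifies $n^2+n(i-n)+(n-1)\le(i+1)^2$ directly, you obtain the schedule from Hall's theorem; your verification of Hall's condition, $(i^*+1)^2-N^2\ge(i^*-N+1)\cdot N$, is essentially the same inequality in disguise, so the marriage theorem buys a cleaner statement at the cost of an existence (rather than explicit) construction. Your write-up is also more scrupulous than the paper's about the side conditions on $\bar\rho$ from \autoref{d1} ($\rho_n(k)\le k$, $\rho_n(k)\ge2$ for $k\ge2$ --- for which you rightly check that each $P_n$ is linked via the $m=2$ case of your claim --- and $\rho_n\ge\rho_{n+1}$, arranged by pointwise minima), none of which the paper verifies. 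Two cosmetic nits: pin down $|\sigma'|$ as, say, one more than the largest matched coordinate so that every matched coordinate lies below $|\sigma'|$, and the letter $n$ is overloaded as both the cell index and a right-hand vertex of the bipartite graph. Neither affects correctness.
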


\begin{proof}
Fix $n\in\omega$ and $\sigma$ such that $(\sigma,n,F)\in\Qor_f$ for some~$F$ and define
\begin{align*}
&Q_n:=\set{p\in\Qor_f}{N_p=|F_p|=n},\\
&Q_{n,\sigma}:=\set{p\in Q_n}{\sigma_p=\sigma}.
\end{align*}
By~\autoref{c6}~\ref{c6:0} the set
$\bigcup_{n\in\omega}Q_n$ is dense in~$\Qor_f$.

\begin{clm}\label{d5}
Let $m\geq 2$, $N\geq m\cdot n$, and $A\in[Q_{n,\sigma}]^m$ be such
that there is a~finite set\/~$\Phi$ such that for every $p\in A$, $F_p\frestr[n,N)=\Phi$.
Then the set $A$ has a~common extension in\/~$\Qor_f$ of the form\/
$(\sigma',N,F)$ with $F=\bigcup_{p\in A}F_p$.
\end{clm}

\begin{proof}
By the assumptions we can choose an enumeration
$\set{\varphi_j}{j<n}$ of the set $F\frestr[n,N)$ consisting of functions with domain $[n,N)$ such that $\varphi_j(i)\in{}^{f((i+1)^2)}2$ for $i\in[n,N)$.
For every $k<n(N-n)$ define $\psi_k=\varphi_j(i)$, if $k=n(i-n)+j$
where $j<n$ and $i\in[n,N)$.
Define $\sigma'\supseteq\sigma$ such that $|\sigma'|=n^2+n(N-n)$
(recall that $|\sigma|\leq n^2$) and $\sigma'(n^2+k)=\psi_k\frestr f(n^2+k)$ for $k<n(N-n)$.
We claim that $(\sigma',N,F)\in\Qor_f$.

Clearly, $|F|\leq m\cdot n\leq N$ and $|\sigma'|=n^2+n(N-n)=nN\leq N^2$.
The third item in defining conditions in $\Qor_f$ is
fulfilled.

We verify the first item in the definition of $\Qor_f$.
We need to prove that for every $k<n(N-n)$, $f(n^2+k)\leq|\psi_k|$,
i.e., for every $j<n$ and every $i\in[n,N)$,
$f(n^2+n(i-n)+j)\leq f((i+1)^2)$.
Since $f$ is increasing it is enough to do this for $j=n-1$ and
it is enough to verify that $n^2+n(i-n)+(n-1)\leq(i+1)^2$.
Since $i\geq n$, $n^2+n(i-n)+(n-1)=ni+n-1\leq i^2+i\leq(i+1)^2$.

By definition of $\sigma'$ it is easy to see that $(\sigma',N,F)$
extends every $p\in A$.
\end{proof}
We continue with the proof of~\autoref{Qf-barrho}.
The number $k_n$ of all $\sigma$'s in conditions from~$Q_n$
is~$\leq\prod_{i\leq n^2}2^{f(i)}$
because
$\sum_{j\leq n^2}\prod_{i<j}2^{f(i)}=
\sum_{j\leq n^2}2^{\sum_{i<j}f(i)}\leq
2^{\sum_{i\leq n^2}f(i)}=\prod_{i\leq n^2}2^{f(i)}$.

The number $c_{m,n}$ of $n$~element sets of partial functions
defined on $[n,m\cdot n)$ and satisfying the third item in
definition of $\Qor_f$ is less or equal to the combinatorial number
is $C(\prod_{i\in[n,m\cdot n)}2^{f((i+1)^2)},n)$.
\begin{enumerate}
\item[(1)]
If $A\in[Q_n]^{(l-1)\cdot k_n+1}$, then there is a~set
$C\in[A]^l$ with the same~$\sigma_p$ for all $p\in C$.
\item[(2)]
If $C\in[Q_{n,\sigma}]^{(m-1)\cdot c_{n,m}+1}$, then there is a~set
$B\in[Q_{n,\sigma}]^m$ such that all $p\in B$ have the same
$F_p\frestr[n,m\cdot n)$.
By~\autoref{d5}, the conditions in~$B$ have a~common extension.
\end{enumerate}
Taking $l=(m-1)\cdot c_{n,m}+1$ in (1) and then applying~(2)
we get
\begin{enumerate}
\item[(3)]
If $A\in[Q_n]^{(m-1)\cdot c_{n,m}\cdot k_n+1}$, then there is
$B\in[A]^m$ such that all $p\in B$ have the same $\sigma_p$ and
the same $F_p\frestr[n,m\cdot n)$
and hence, the conditions in~$B$ have a~common extension.
\end{enumerate}
Define $\rho_n(k)=m$ if $(m-1)\cdot c_{n,m}\cdot k_n+1\leq k<m\cdot c_{n,m}\cdot k_n+1$.
Since $\bigcup_{n\in\omega}Q_n$ is dense in~$\Qor_f$,
by~(3) it follows that $\Qor_f$ is $\sigma$-$\bar\rho$-linked.
\end{proof}

Below, we proceed to show that $\Eor_b^h$ is $\sigma$-$\bar\rho$-linked.

\begin{lemma}\label{d8}
Let $b,h\in\baire$ be such that $b\geq 1$ and\/
$\lim_{i\to\infty}h(i)/b(i)=0$.
Then\/ $\Eor^h_b$~is $\sigma$-$\bar\rho$-linked for some
sequence of functions $\bar\rho=\seq{\rho_n}{n\in\omega}$ such that
for every $n\in\omega$, $\lim_{k\to\infty}\rho_n(k)=\infty$.
\end{lemma}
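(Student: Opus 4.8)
The plan is to split $\Eor^h_b$ according to the stem of a condition and the width of its slalom. For $s\in\Seq_{<\omega}(b)$ and $m<\omega$ with $mh(i)<b(i)$ for all $i\geq|s|$, put
\[P_{s,m}:=\set{p\in\Eor^h_b}{s^p=s\text{ and }\varphi^p\in\Swf(b,mh)}.\]
Since $b\in\baire$ there are only countably many such pairs, and every condition of $\Eor^h_b$ lies in some $P_{s,m}$; fixing an enumeration $\seq{(s_n,m_n)}{n<\omega}$ of these pairs and setting $P_n:=P_{s_n,m_n}$ therefore gives $\bigcup_{n<\omega}P_n=\Eor^h_b$, which is clause~\ref{d1:a} of \autoref{d1}. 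It remains to attach to each $P_n$ a function $\rho_n$ with $\lim_{k\to\infty}\rho_n(k)=\infty$ satisfying clause~\ref{d1:b}.

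So fix a pair $(s,m)$. Using $\lim_{i\to\infty}h(i)/b(i)=0$, for each $t<\omega$ let $M_t\geq|s|$ be least such that $2^tmh(i)<b(i)$ for all $i\geq M_t$ (hence $M_0=|s|$, and $t\mapsto M_t$ is non-decreasing), and let
\[C_t:=\prod_{i\in[|s|,M_t)}\bigl|[b(i)]^{\leq mh(i)}\bigr|,\]
a non-decreasing sequence of positive integers for which $t\mapsto C_t2^t$ is strictly increasing. The core of the argument is the observation: \emph{if $t<\omega$ and $A\subseteq P_{s,m}$ has size $2^t$ and all members of $A$ agree on $\varphi^p\frestr[|s|,M_t)$, then $A$ has a common extension.} To prove it, note that on $[|s|,M_t)$ the common value $\varphi^{p_0}(i)$ has size at most $mh(i)<b(i)$, so one may pick $r(i)\in b(i)\smallsetminus\varphi^{p_0}(i)$; then the condition $q$ whose stem is $s$ followed by $r(|s|),\dots,r(M_t-1)$ and whose slalom is $\psi(i):=\bigcup_{p\in A}\varphi^p(i)$, with width parameter $m_q:=2^tm$, is a legitimate element of $\Eor^h_b$, because $|\psi(i)|\leq 2^tmh(i)=m_qh(i)$ for all $i$ and $m_qh(i)<b(i)$ for all $i\geq|s^q|=M_t$ by the choice of $M_t$; and $q\leq p$ for each $p\in A$ since $s^p=s$ is an initial segment of $s^q$, $\varphi^p(i)\subseteq\psi(i)$, and the new stem values lie outside every $\varphi^p(i)$.

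Now take any $\seq{p_i}{i<k}\subseteq P_{s,m}$ with $k\geq1$ and let $t(k)$ be the largest $t$ with $C_t2^t\leq k$. Since there are at most $C_{t(k)}$ possible restrictions $\varphi^{p_i}\frestr[|s|,M_{t(k)})$ and $k\geq C_{t(k)}2^{t(k)}$, some $2^{t(k)}$ of the $p_i$ share the same such restriction, and the observation gives a common extension of that subfamily. Hence clause~\ref{d1:b} holds for $P_n$ with $\rho_n(k):=2^{t_n(k)}$, where $t_n(k)$ is this quantity computed from $(s_n,m_n)$; note $\rho_n(k)\leq C_{t_n(k)}2^{t_n(k)}\leq k$, and, because $t\mapsto C_t2^t$ is a fixed strictly increasing integer-valued function, $t_n(k)\to\infty$ and so $\rho_n(k)\to\infty$ as $k\to\infty$. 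Finally, replacing each $\rho_n$ by the pointwise minimum $\min_{j\leq n}\rho_j$ makes $\bar\rho$ non-increasing in $n$ (a witnessing subfamily of size $\rho_{(s_n,m_n)}(k)$ may simply be thinned), and the remaining normalization requirements on $\bar\rho$ in \autoref{d1} are arranged routinely.

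The step I expect to be delicate is exactly the verification inside the observation that the common extension is a genuine condition: the naive candidate $\bigl(s,\bigcup_{p\in A}\varphi^p\bigr)$ need not be one, since on the finitely many coordinates $i\in[|s|,M_t)$ there is no control of $2^tmh(i)$ against $b(i)$. Absorbing those coordinates into the stem is precisely what fixes this — there the slaloms coincide, so there is room to choose a legal new stem value, and the width inequality $m_qh(i)<b(i)$ then only has to hold from $M_t$ on, where it does by definition of $M_t$. (This is also what keeps the argument outside the scope of \autoref{d14}: here the relevant ratios $2^{t(k)}/k$ would tend to $0$.)
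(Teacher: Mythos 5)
Your proof is correct, and its combinatorial core is the same as the paper's: partition the conditions by stem data and width parameter, pigeonhole on the finitely many possible restrictions of the slalom to a suitable initial interval, and take the union of the slaloms on the tail where $2^t m h(i)<b(i)$ holds. The paper packages this through an abstract intermediate step, \autoref{d11}, about posets covered by pieces $Q_m$ carrying projections $\pi_{m,j}$ onto finite sets whose fibers are $r_m(j)$-linked, and then verifies its hypotheses for $\Eor^h_b$ with cells $Q_{m,k}$ indexed by the width $m$ and the stem \emph{length} $k$ (so the stem itself is part of what gets pigeonholed, via the finite set $a_{m,k,j}$), whereas you work directly with cells indexed by the stem itself. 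The one genuinely different technical move is your treatment of the interval $[|s|,M_t)$: in the paper's verification the natural common bound of a fiber keeps the stem fixed and unions the slaloms, so the single enlarged width parameter must satisfy clause (ii) of \autoref{c7} already from the old stem length onward, where only the original bound $mh(i)<b(i)$ is guaranteed; your device of extending the stem through $[|s|,M_t)$ by values outside the (common) slalom makes clause (ii) only need to hold from $M_t$ on, where it does by the choice of $M_t$. This is exactly the delicate point you flag, and handling it this way is a clean (arguably more careful) variant. The normalization issues you defer at the end (e.g.\ $\rho_n(k)\leq k$ versus $\rho_n(k+1)\geq2$ at $k=1$, and monotonicity in $n$) are artifacts of how \autoref{d1} is stated rather than gaps in your argument, and your pointwise-minimum fix for monotonicity is fine.
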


The proof of~\autoref{d8} is a consequence of the following lemma:

\begin{lemma}\label{d11}
Let\/ $\Por$ be a~forcing notion and let\/
$\seq{Q_m}{m\in\omega}$,
$\seq{r_m}{m\in\omega}$,
$\seq{\pi_{m,j}}{m,j\in\omega}$,
and\/
$\seq{a_{m,j}}{m,j\in\omega}$ be sequences with the following properties:
\begin{enumerate}[label=\rm(\arabic*)]
\item
$\bigcup_{m\in\omega}Q_m$ is a~dense subset of\/~$\Por$,
\item
for every $m,j\in\omega$,
$r_m\colon\omega\to\omega$, $\pi_{m,j}\colon Q_m\to a_{m,j}$, $|a_{m,j}|<\omega$,
for every $x\in a_{m,j}$, $\pi_{m,j}^{-1}[\{x\}]$ is
an $r_m(j)$-linked set of cardinality~$\geq r_m(j)$, and
\item
for every $m\in\omega$, $\lim_{j\to\infty}r_m(j)=\infty$.
\end{enumerate}
Then\/ $\Por$ is $\sigma$-$\bar\rho$-linked for some
$\bar\rho=\seq{\rho_n}{n\in\omega}$ such that
for every $n\in\omega$, $\lim_{i\to\infty}\rho_n(i)=\infty$.
\end{lemma}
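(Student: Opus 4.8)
The plan is to verify $\sigma$-$\bar\rho$-linkedness (\autoref{d1}) directly, taking $P_n:=Q_n$ for every $n\in\omega$. Then $\bigcup_{n\in\omega}P_n=\bigcup_{m\in\omega}Q_m$ is dense in $\Por$ by hypothesis~(1), so the density clause of~\autoref{d1} is for free and the whole problem reduces to exhibiting one sequence $\bar\rho=\seq{\rho_n}{n\in\omega}$ with $\lim_{k\to\infty}\rho_n(k)=\infty$ for every $n$ such that any $k$ conditions from $P_n=Q_n$ contain a sub-family of size $\rho_n(k)$ with a common lower bound (a single condition below them all). The heuristic is that, although a single fibre $\pi_{m,j}^{-1}[\{x\}]$ is only \emph{boundedly} linked --- namely $r_m(j)$-linked --- we are allowed to let the colour index $j$ grow with the number $k$ of given conditions: pigeonholing $k$ conditions into the $|a_{m,j}|<\omega$ colours of $a_{m,j}$ produces a monochromatic subset of size $\ge k/|a_{m,j}|$, and by hypothesis~(3) we may pick $j$ so large that $r_m(j)$ still exceeds that size.

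To make this precise, I would fix $n$, and for each $t\in\omega$ use hypothesis~(3) to choose $j_n(t)\in\omega$ with $r_n(j_n(t))\ge t$; set $N_n(t):=t\cdot|a_{n,j_n(t)}|$, a finite number by hypothesis~(2). The core observation is that if $A\subseteq Q_n$ with $|A|=k\ge N_n(t)$, then some $t$-element $B\subseteq A$ has a common lower bound: assigning to each $p\in A$ the colour $\pi_{n,j_n(t)}(p)\in a_{n,j_n(t)}$, pigeonhole gives a colour $x$ hit by at least $k/|a_{n,j_n(t)}|\ge t$ members of $A$; any $t$ of them lie in $\pi_{n,j_n(t)}^{-1}[\{x\}]$, which is $r_n(j_n(t))$-linked with $r_n(j_n(t))\ge t$, so they have a common lower bound.

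I would then define
\[\rho_n(k):=\max\bigl(\{0\}\cup\set{t\le k}{k\ge N_n(t)}\bigr).\]
Because $N_n(t)\ge t$, the indexing set above is bounded by $k$, so $\rho_n$ is well defined, non-decreasing in $k$, and bounded by the identity; and for each $T$ we have $\rho_n(k)\ge T$ whenever $k\ge N_n(T)$, hence $\lim_{k\to\infty}\rho_n(k)=\infty$. By the core observation, \autoref{d1}~\ref{d1:b} holds for $P_n=Q_n$ with this $\rho_n$: a $k$-element subset of $Q_n$ contains a sub-family of size $\ge\rho_n(k)$ with a common lower bound, hence one of size exactly $\rho_n(k)$. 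Finally, replacing each $\rho_n$ by $k\mapsto\min_{i\le n}\rho_i(k)$ makes the sequence pointwise non-increasing in $n$ while preserving ``non-decreasing in $k$, bounded by the identity, and tending to infinity'' (a minimum of finitely many functions tending to infinity), and it can only make \autoref{d1}~\ref{d1:b} easier; a harmless modification on a finite initial segment of $\omega$ then takes care of the remaining normalisation clauses of~\autoref{d1}.

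The one delicate point is the balancing in the core observation: one must select the colour index as a function $j_n(t)$ of a target $t$ so that the two competing quantities $\lceil k/|a_{n,j}|\rceil$ (how monochromatic the input can be forced to be) and $r_n(j)$ (how much linkedness the monochromatic fibre then retains) are \emph{both} at least $\rho_n(k)$, with $\rho_n(k)\to\infty$; once the finite thresholds $N_n(t)$ are identified, everything else is bookkeeping. In particular the construction delivers only $\rho_n(k)\to\infty$ and not $\rho_n(k)=k$, which is precisely the sense in which $\sigma$-$\bar\rho$-linkedness is genuinely weaker than $\sigma$-centredness. (The cardinality clause $|\pi_{m,j}^{-1}[\{x\}]|\ge r_m(j)$ of hypothesis~(2) is not used in this argument; it is present to match the concrete applications, e.g.~\autoref{d8}, of~\autoref{d11}.)
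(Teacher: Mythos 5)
Your proof is correct and follows essentially the same route as the paper's: pigeonhole the given conditions into the finitely many colours of $a_{n,j}$ for a colour index $j$ chosen (as a function of the target size) so large that the $r_n(j)$-linkedness of the monochromatic fibre exceeds the guaranteed monochromatic block, then read off $\rho_n$ from the resulting finite thresholds. The only cosmetic difference is that the paper takes $P_n=\bigcup_{m\le n}Q_m$ and absorbs the choice of $m$ into the pigeonhole (hence the extra factor $n+1$ and the use of $r^{\min}_n$), whereas you keep $P_n=Q_n$ and restore monotonicity in $n$ afterwards by taking minima; both handle the normalisation clauses of the definition with the same (acceptable) level of informality.
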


\begin{proof}
Define $P_n=\bigcup\set{Q_m}{m\leq n}$,
$r^{\min}_n(j)=\min\set{r_m(j)}{m\leq n}$, and
$v_n(j)=(n+1)\cdot\max\set{|a_{m,j}|\cdot r_m(j)}{m\leq n}$.
By the pigeonhole principle and by~(ii), for every $A\in[P_n]^{v_n(j)}$
there is a~set $B\in[A]^{r^{\min}_n(j)}$ with a~common extension.
Define
\[
\rho_n(i)=r^{\min}_n(j),\quad
\text{if $v_n(j)\leq i<v_n(j+1)$.}
\]
Now for every $n,i<\omega$ and $A\in[P_n]^i$ there is $B\in[A]^{\rho_n(i)}$ such that $B$ has a~common extension.
\end{proof}

\begin{proof}[Proof of~\autoref{d8}]
We verify the assumptions of~\autoref{d11} with index~$m$ replaced
by ordered pairs $(m,k)$.
For $m,k\in\omega$ let
$Q_{m,k}=\set{p\in\Eor^h_b}
{s^p=k\setand\forall i\geq k\bsp|\varphi^p(i)|\leq m\cdot h(i)<b(i)}$.
It is easy to see that the set $\bigcup_{m,k\in\omega}Q_{m,k}$
is a~dense subset of~$\Eor^h_b$.
For $m,k,j\in\omega$ denote
$a_{m,k,j}=\Seq_{\leq k}(b)\times(\prod_{i<k}\Pwf(b(i))\times
\prod_{i\in[k,j)}[b(i)]^{\leq m\cdot h(i)})$
and define
$\pi_{m,k,j}\colon Q_{m,k}\to a_{m,k,j}$ and $r_{m,k}\colon\omega\to\omega$
by
\begin{align*}
&\pi_{m,k,j}(p)=(s^p,\varphi^p\frestr\max\{k,j\}),\\
&r_{m,k}(j)=\max\set{n\in\omega}{\forall i\geq\max\{k,j\}\bsp n\cdot m\cdot h(i)<b(i)}.
\end{align*}
It is easy to see that $\lim_{j\to\infty}r_{m,k}(j)=\infty$
and for every $x\in a_{m,k,j}$, the set $\pi_{m,k,j}^{-1}[\{x\}]$ is infinite
and $r_{m,k}(j)$-linked.
\end{proof}

We conclude this section by presenting the following property to keep $\add(\Nwf)$ small in forcing extensions.
This one is useful for posets that satisfy some linkedness as in~\autoref{d1}.

\begin{definition}\label{d7}
Let $\bar\rho=\seq{\rho_n}{n\in\omega}$ as in~\autoref{d1}.
For $h\in\baire$ let

\begin{enumerate}
\item
$g_m(k)=\max\set{n\in\omega}{\rho_m(n)\leq h(k)}$ and $h'(k)=\sum_{m\le k}g_m(k)$,
\item
$\Hwf_{\bar\rho,h}=\set{h_n}{n\in\omega}\subseteq\baire$ where $h_0=h$ and $h_{n+1}=(h_n)'$.
\end{enumerate}
Note that $g_m(k)\geq h(k)$ because $2\leq\rho_m(n)\le n$
(for $n\geq 2$) and hence $h'(k)\geq(k+1)\cdot h(k)$.
Therefore $h_{n+1}(k)\geq(k+1)^{n+1}$ whenever $h(k)\geq1$.
\end{definition}

\begin{fact}
Let $\bar\rho=\seq{\rho_n}{n\in\omega}$ as in~\autoref{d1}. Then $\Lc^*_{\Hwf_{\bar\rho,h}}\eqT\Nwf$ whenever
$h$~is positive.
\end{fact}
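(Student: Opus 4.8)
The plan is to deduce the Fact from the classical localization characterization of $\add(\Nwf)$ and $\cof(\Nwf)$ recalled in \autoref{b0}~\ref{b0:4}, which in its Tukey form reads $\Lc^*\eqT\Nwf$, where $\Lc^*=\langle\baire,\Scal(\omega,\Hcal),{\in^*}\rangle$ and $\Hcal=\set{\id^{k+1}}{k<\omega}$. Hence it is enough to show $\Lc^*_{\Hwf_{\bar\rho,h}}\eqT\Lc^*$, and I would do so by producing a Tukey connection in each direction.

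One direction, $\Lc^*_{\Hwf_{\bar\rho,h}}\leqT\Lc^*$, is immediate: since $h$ is positive, the inequality noted in \autoref{d7} gives $h_{k+1}(i)\ge(i+1)^{k+1}\ge i^{k+1}=\id^{k+1}(i)$ for all $i,k<\omega$, so $\Scal(\omega,\Hcal)\subseteq\Scal(\omega,\Hwf_{\bar\rho,h})$, and the identity on $\baire$ together with this inclusion of slalom families is a Tukey connection (both relational systems carry the relation $\in^*$).

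The reverse direction, $\Lc^*\leqT\Lc^*_{\Hwf_{\bar\rho,h}}$, is the substantive one, and I would prove it by a Bartoszyński-style interval argument. Fix a strictly increasing sequence $\seq{a_m}{m<\omega}$ with $a_0\ge2$ and $a_m\ge\max\set{h_n(m)}{n\le m}$, put $I_m=[a_m,a_{m+1})$, and fix bijections $c_m\colon\prod_{i\in I_m}\omega\to\omega$. Let $\Psi_-(x)(m)=c_m(x\frestr I_m)$. Given $\psi\in\Scal(\omega,\Hwf_{\bar\rho,h})$, let $j=j(\psi)$ be least with $|\psi(m)|\le h_j(m)$ for all $m$, and let $C_j$ be large enough that $h_j(m)\le a_m^{C_j}$ for every $m$ (possible because $h_j(m)\le a_m$ once $m\ge j$, and only finitely many $m<j$ remain, each with $a_m\ge2$); define $\Psi_+(\psi)(i)=\set{c_m^{-1}(t)(i)}{t\in\psi(m)}$ for $i\in I_m$. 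Then $|\Psi_+(\psi)(i)|\le|\psi(m)|\le h_j(m)\le a_m^{C_j}\le i^{C_j}$ for $i\in I_m$, so $\Psi_+(\psi)\in\Scal(\omega,\Hcal)$, and $\Psi_+$ is well defined since $j(\psi)$ depends only on $\psi$; moreover $\Psi_-(x)\in^*\psi$ plainly forces $x\in^*\Psi_+(\psi)$. Thus $(\Psi_-,\Psi_+)$ is a Tukey connection from $\Lc^*$ into $\Lc^*_{\Hwf_{\bar\rho,h}}$, and combining the two directions yields $\Lc^*_{\Hwf_{\bar\rho,h}}\eqT\Lc^*\eqT\Nwf$.

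The point to watch is the reverse connection. The family $\Hwf_{\bar\rho,h}$ can contain functions growing faster than every polynomial — how fast $h_{n+1}$ grows is dictated by how slowly $\rho_n$ tends to infinity — so there is no uniform width bound into which all of $\Scal(\omega,\Hwf_{\bar\rho,h})$ compresses; in particular one cannot get $\bfrak(\Lc^*_{\Hwf_{\bar\rho,h}})\le\add(\Nwf)$ by merely reusing a witness for $\add(\Nwf)$. The argument survives because $\Psi_-$ can be kept uniform by encoding over extremely sparse intervals — whence the diagonal choice $a_m\ge\max_{n\le m}h_n(m)$ — while the target family $\Hcal$ supplies polynomial width bounds of arbitrary degree, so the compressing map $\Psi_+$ is free to pick the degree $C_{j(\psi)}$ according to the complexity $j(\psi)$ of the slalom being compressed. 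The remaining verifications are routine bookkeeping.
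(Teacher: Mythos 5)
Your argument is correct and is exactly the intended one: the paper states this Fact without proof, and the standard justification is precisely Bartoszy\'nski's characterization $\Lc^*\eqT\Nwf$ (the Tukey form of \cite[Thm.~2.3.9]{BJ} behind \autoref{b0}~\ref{b0:4}) combined with the width-independence of localization, for which your interval-compression connection, using the bound $h_{k+1}(i)\geq(i+1)^{k+1}$ from \autoref{d7} in one direction and the diagonal choice $a_m\geq\max_{n\leq m}h_n(m)$ in the other, is the standard proof. The only cosmetic loose end is defining $\Psi_+(\psi)(i)$ (say, as $\emptyset$) for $i<a_0$, which does not affect $\in^*$.
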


Inspired by~\cite[Lem.~5.14]{BrM}, we present the following lemma.

\begin{lemma}\label{d2}
Let $\bar\rho$ be as in~\autoref{d1}, let $h\in\baire$, and let
$h'$~be as in \autoref{d7}.
If\/ $\Por$ is $\sigma$-$\bar\rho$-linked and $\dot\varphi$ is
a\/~$\Por$-name for a~member in $\Swf(\omega,h)$, then there is
a $\varphi'\in\Swf(\omega,h')$ such that for any $f\in\baire$,
$f\not\in^*\varphi'$ implies\/ $\Vdash f\not\in^*\dot\varphi$.
\end{lemma}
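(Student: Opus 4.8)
The plan is to build $\varphi'$ explicitly from $\dot\varphi$ together with a witnessing sequence $\seq{P_n}{n\in\omega}$ for the $\sigma$-$\bar\rho$-linkedness of $\Por$, by collecting into $\varphi'(k)$ all natural numbers that some condition of ``complexity $\le k$'' forces into $\dot\varphi(k)$. Precisely, for $m,k\in\omega$ set
\[
S_m(k):=\set{j\in\omega}{\exists\,p\in P_m\colon p\Vdash j\in\dot\varphi(k)},
\qquad
\varphi'(k):=\bigcup_{m\le k}S_m(k).
\]
Since each $S_m(k)$ is defined from $\dot\varphi$, $\bar\rho$ and $h$, the resulting $\varphi'$ is a ground-model object, and it will follow from the computation below that $\varphi'\in\Swf(\omega,h')$ with $h'$ as in \autoref{d7}.

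The first, and main, step is the combinatorial bound $|S_m(k)|\le g_m(k)$ for all $m,k$, where $g_m$ is as in \autoref{d7}. Suppose not; fix $N:=g_m(k)+1$ pairwise distinct $j_0,\dots,j_{N-1}\in S_m(k)$, and for each $i<N$ choose a witness $p_i\in P_m$ with $p_i\Vdash j_i\in\dot\varphi(k)$ (the $p_i$ need not be distinct, which is the point to be careful about). Applying the defining clause of $\sigma$-$\bar\rho$-linkedness (\autoref{d1}) to the length-$N$ sequence $\seq{p_i}{i<N}\in{}^NP_m$ — in the form selecting an index set $u\in[N]^{\rho_m(N)}$, as in the proof of \autoref{d14} — we obtain $q\in\Por$ that is a common upper bound of $\set{p_i}{i\in u}$ (i.e.\ $q\le p_i$ for all $i\in u$). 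Then $q\Vdash j_i\in\dot\varphi(k)$ for all $i\in u$, and since the $j_i$ are pairwise distinct this forces $|\dot\varphi(k)|\ge|u|=\rho_m(N)$; as $q\Vdash\dot\varphi\in\Swf(\omega,h)$ we also have $q\Vdash|\dot\varphi(k)|\le h(k)$, hence $\rho_m(N)\le h(k)$, so $N\le g_m(k)$ by the definition of $g_m$ as the largest such value — contradicting $N=g_m(k)+1$. Summing over $m\le k$ gives $|\varphi'(k)|\le\sum_{m\le k}g_m(k)=h'(k)$, so $\varphi'\in\Swf(\omega,h')$.

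For the required implication I argue contrapositively: assume $\not\Vdash f\not\in^*\dot\varphi$ and fix $p\in\Por$ with $p\Vdash f\in^*\dot\varphi$; the goal is to show $f\in^*\varphi'$. Since $p$ forces $\exists k_0\,\forall k\ge k_0\colon f(k)\in\dot\varphi(k)$, a standard density argument yields $q_1\le p$ and $k_0^*\in\omega$ with $q_1\Vdash\forall k\ge k_0^*\colon f(k)\in\dot\varphi(k)$. Using that $\bigcup_nP_n$ is dense in $\Por$, pick $q^*\le q_1$ with $q^*\in P_{m^*}$ for some $m^*\in\omega$; then $q^*$ still forces $\forall k\ge k_0^*\colon f(k)\in\dot\varphi(k)$. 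Now for every $k\ge\max\{m^*,k_0^*\}$ we have $q^*\Vdash f(k)\in\dot\varphi(k)$, so $f(k)\in S_{m^*}(k)$, and since $m^*\le k$ this gives $f(k)\in\varphi'(k)$. Hence $\forall^\infty k\colon f(k)\in\varphi'(k)$, i.e.\ $f\in^*\varphi'$, as desired.

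The hard part is the combinatorial estimate in the second paragraph, and within it the one genuinely delicate point is that several of the chosen values $j_i$ may be forced into $\dot\varphi(k)$ by one and the same condition of $P_m$; this is why the linkedness property must be invoked on the indexing sequence $\seq{p_i}{i<N}$ (picking $u\in[N]^{\rho_m(N)}$) rather than on the underlying set of witnesses, exactly as it is used in \autoref{d1} and in the proof of \autoref{d14}. By contrast, the passage from $\dot\varphi$ to $\varphi'$ in the last paragraph is painless: because ``$\in^*$'' only demands eventual containment, it suffices to extract a \emph{single} condition $q^*$ that both decides a threshold $k_0^*$ and lies in some $P_{m^*}$, after which everything beyond $\max\{m^*,k_0^*\}$ works automatically, with no uniformity in $q^*$ required.
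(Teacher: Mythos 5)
Your proof is correct and follows essentially the same route as the paper's: the same set $\varphi_m(k)=S_m(k)$ of values forced by conditions in the $m$-th piece, the same bound $|S_m(k)|\le g_m(k)$ via the linkedness property applied to the (possibly repeating) sequence of witnesses, and the same contrapositive density argument at the end. You even flag explicitly the one subtlety the paper itself acknowledges parenthetically, namely that the argument really invokes the sequence ($\star$) form of \autoref{d1} because distinct $j_i$ may share a witness.
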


\begin{proof}
Let $\Por$ be a $\sigma$-$\bar\rho$-linked poset witnessed by $\seq{Q_m}{m\in\omega}$. For each $m\in\omega$ define $\varphi_m,\varphi'\colon\omega\to[\omega]^{<\aleph_0}$ by
\[
\varphi_m(k)=
\set{i\in\omega}{\exists q\in Q_m\,\colon q\Vdash i\in\dot\varphi(k)}
\quad\text{and}\quad
\varphi'(k)=\bigcup_{m\le k}\varphi_m(k)
\]

\begin{clm}
$|\varphi_m(k)|\leq g_m(k)$ for all $k$.
\end{clm}

\begin{proof}
For every $i\in\varphi_m(k)$ let $q_i\in Q_m$ be such that
$q_i\Vdash i\in\dot\varphi(k)$.
Denote $n=|\varphi_m(k)|$ and let $a\in[\varphi_m(k)]^{\rho_m(n)}$
be such that the set $\set{q_i}{i\in a}$ has an upper bound~$r$.
Then $\rho_m(n)\leq h(k)$ because $r$~forces that $a\subseteq\dot\varphi(k)$.
By definition of~$g_m(k)$, $|\varphi_m(k)|=n\leq g_m(k)$.
(In fact, $\sigma$-$\bar\rho{\star}$-linked is used.)
\end{proof}
Then
$\forall k$ $|\varphi'(k)|\leq h'(k)$ and
$\forall k\geq m$
$\varphi_m(k)\subseteq\bigcup_{i\le k}\varphi_i(k)=\varphi'(k)$.
\begin{clm}
If $f\in\baire$ with $f\not\in^*\varphi'$, then $\Vdash f\not\in^*\dot\varphi$.
\end{clm}

\begin{proof}
 Assume that $f\in\baire$ and $f\not\in^*\varphi'$.
 Towards a~contradiction assume that there are $p\in\Por$ and
 $k_0\in\omega$ such that
 $p\Vdash``\forall k\geq k_0$ $f(k)\in\dot\varphi(k)$''.
 Let $m\in\omega$ be such that $p\in Q_m$.
 We can assume that $k_0\geq m$.
 Then $\forall k\geq k_0$ $f(k)\in\varphi_m(k)\subseteq\varphi'(k)$
 which is a~contradiction.
\end{proof}
This completes the proof of the lemma.
\end{proof}

As a direct consequence of~\autoref{d2}, we infer:

\begin{corollary}\label{d3}
Let $\bar\rho$ be as in~\autoref{d1}. Assume
that $h\in\baire$ converges to infinity. Then there is some $\Hwf_{\bar\rho,h}=\set{h_n}{n\in\omega}\subseteq\baire$ with $h_0=h$ such that any $\sigma$-$\bar\rho$-linked posets is\/ $\Lc^*(\Hwf_{\bar\rho,h})$-good.
\end{corollary}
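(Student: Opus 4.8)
The plan is to read the corollary off \autoref{d2} by a routine bookkeeping argument. Take $\Hwf_{\bar\rho,h}=\set{h_n}{n\in\omega}$ to be precisely the family of \autoref{d7}, so that $h_0=h$ and $h_{n+1}=(h_n)'$, and recall that $\Lc^*(\Hwf_{\bar\rho,h})$ is the relational system $\la\baire,\Swf(\omega,\Hwf_{\bar\rho,h}),{\in^*}\ra$ whose right-hand side is $\Swf(\omega,\Hwf_{\bar\rho,h})=\bigcup_{n\in\omega}\Swf(\omega,h_n)$. By \autoref{b5} it suffices to show: for every $\sigma$-$\bar\rho$-linked poset $\Por$ and every $\Por$-name $\dot\varphi$ for a member of $\Swf(\omega,\Hwf_{\bar\rho,h})$, there is a nonempty countable $H\subseteq\Swf(\omega,\Hwf_{\bar\rho,h})$ such that every $f\in\baire$ with $f\not\in^*\psi$ for all $\psi\in H$ satisfies $\Vdash f\not\in^*\dot\varphi$.

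Fix such $\Por$ and $\dot\varphi$. For each $n\in\omega$ I would introduce the $\Por$-name $\dot\varphi_n$ for the ``width-$h_n$ truncation of $\dot\varphi$'', namely $\dot\varphi_n(k)=\dot\varphi(k)$ when $|\dot\varphi(k)|\le h_n(k)$ and $\dot\varphi_n(k)=\emptyset$ otherwise; then $\Vdash\dot\varphi_n\in\Swf(\omega,h_n)$, and since $\dot\varphi$ is forced to be bounded in width by some $h_m$ we get $\Vdash\exists n\,(\dot\varphi_n=\dot\varphi)$. Next, apply \autoref{d2} to each $\dot\varphi_n$, with the role of ``$h$'' in that lemma taken by $h_n$ and hence the role of ``$h'$'' taken by $(h_n)'=h_{n+1}$: this yields some $\varphi'_n\in\Swf(\omega,h_{n+1})\subseteq\Swf(\omega,\Hwf_{\bar\rho,h})$ such that $f\not\in^*\varphi'_n$ implies $\Vdash f\not\in^*\dot\varphi_n$, for every $f\in\baire$. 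Set $H=\set{\varphi'_n}{n\in\omega}$; this is nonempty, and it is countable because $\Hwf_{\bar\rho,h}$ is. If $f\in\baire$ is $\Lc^*(\Hwf_{\bar\rho,h})$-unbounded over $H$, then $\Vdash f\not\in^*\dot\varphi_n$ for all $n$, and together with $\Vdash\exists n\,(\dot\varphi_n=\dot\varphi)$ this forces $f\not\in^*\dot\varphi$. Hence $\Por$ is $\aleph_1$-$\Lc^*(\Hwf_{\bar\rho,h})$-good, i.e.\ $\Lc^*(\Hwf_{\bar\rho,h})$-good.

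I do not expect a genuine obstacle: all the content lies in \autoref{d2}, and the rest is bookkeeping. The only points worth keeping honest are that the truncated names $\dot\varphi_n$ really are $\Por$-names for slaloms of width $\le h_n$ (immediate from the case split) and that $H$ stays countable — which is exactly why $\Hwf_{\bar\rho,h}$ is taken to be the countable family of \autoref{d7}. The hypothesis that $h$ converges to infinity plays no role in the goodness argument itself; it is used only to guarantee (via the estimate $h_{n+1}(k)\ge(k+1)^{n+1}$ for large $k$ noted after \autoref{d7}) that $\Lc^*(\Hwf_{\bar\rho,h})$ is Tukey equivalent to the full localization system, so that the corollary really does keep $\add(\Nwf)$ small in the applications of \autoref{sec:s5}.
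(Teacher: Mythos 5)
Your argument is correct and is exactly the routine deduction the paper intends when it states \autoref{d3} ``as a direct consequence of \autoref{d2}'' without further proof: truncate the name $\dot\varphi$ into levelwise names $\dot\varphi_n\in\Swf(\omega,h_n)$, apply \autoref{d2} to each to get $\varphi'_n\in\Swf(\omega,h_{n+1})$, and take $H=\set{\varphi'_n}{n\in\omega}$. Your closing remarks (that the countability of $H$ comes from applying the lemma once per $n$, and that the hypothesis that $h$ tends to infinity is only needed to make $\Lc^*(\Hwf_{\bar\rho,h})$ Tukey-equivalent to the localization system controlling $\add(\Nwf)$) are also accurate.
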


\section{Applications to the left side of Cichon's diagram}\label{sec:s5}

This section provides applications of our results in forcing iterations. In particular, we perform forcing constructions by using the uf-extendable matrix iteration technique from~\cite{BCM} defined below, to prove~\autoref{Thm:a0},~\ref{Thm:a2}, and~\ref{Thm:a1}.

We now present the matrix iterations with ultrafilters method, which helps to force many simultaneous values in Cicho\'n's diagram.

\begin{definition}[{\cite[Def.~2.10]{BCM}}]\label{f1}
A~\emph{simple matrix iteration} of ccc posets (see~\autoref{matrixuf}) is composed of the following objects:
\begin{enumerate}[label=\rm (\Roman*)]
\item ordinals $\gamma$ (height) and $\pi$ (length);
\item a function $\Delta\colon \pi\to\gamma$;
\item a sequence of posets $\seq{\Por_{\alpha,\xi}}{\alpha\leq \gamma,\ \xi\leq \pi}$ where $\Por_{\alpha,0}$ is the trivial poset for any $\alpha\leq \gamma$;
\item for each $\xi<\pi$, $\dot{\Qor}^*_\xi$ is a
$\Por_{\Delta(\xi),\xi}$-name of a poset such that $\Por_{\gamma,\xi}$ forces it to be ccc;
\item $\Por_{\alpha,\xi+1}=\Por_{\alpha,\xi}\ast\Qnm_{\alpha,\xi}$, where
\[\dot{\mathbb{Q}}_{\alpha,\xi}:=
\begin{cases}
\Qnm^*_\xi & \textrm{if $\alpha\geq\Delta(\xi)$,}\\
\{0\}& \textrm{otherwise;}
\end{cases}\]
\item for $\xi$ limit, $\Por_{\alpha,\xi}:=\limdir_{\eta<\xi}\Por_{\alpha,\eta}$.
\end{enumerate}

It is known that $\alpha\leq\beta\leq\gamma$ and $\xi\leq\eta\leq\pi$ imply $\Por_{\alpha,\xi}\subsetdot\Por_{\beta,\eta}$, see e.g.~\cite{B1S} and \cite[Cor.~4.31]{CM}. If $G$ is $\Por_{\gamma,\pi}$-generic
over $V$, we denote $V_{\alpha,\xi}= [G\cap\Por_{\alpha,\xi}]$ for all $\alpha\leq\gamma$ and $\xi\leq\pi$.
\end{definition}

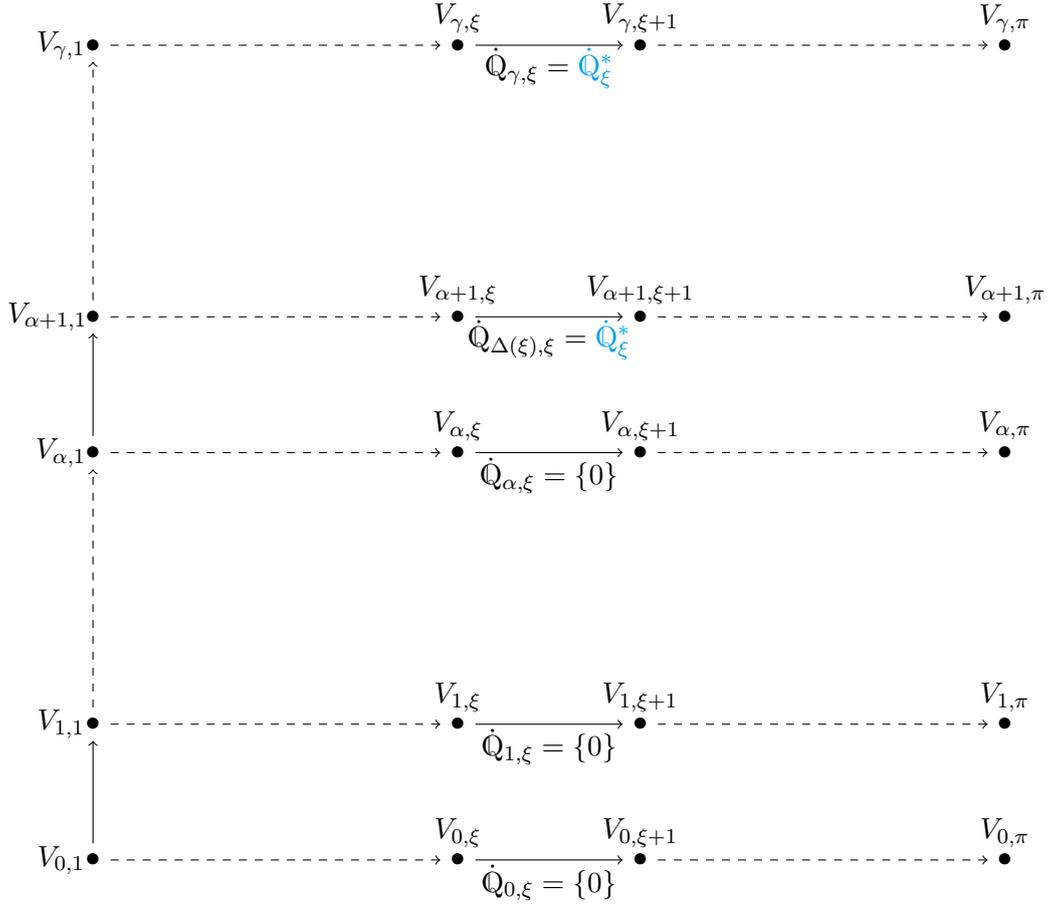
\begin{figure}[ht]
\centering
\begin{tikzpicture}[scale=1.2]
\small{

\node (f00) at (0,0){$\bullet$};
\node (f01) at (0,1.5){$\bullet$};
\node (f02) at (0,4.5) {$\bullet$} ;
\node (f03) at (0,6) {$\bullet$} ;
\node (f04) at (0,9) {$\bullet$} ;
\node (fxi0) at (4,0){$\bullet$};
\node (fxi1) at (4,1.5){$\bullet$};
\node (fxi2) at (4,4.5) {$\bullet$};
\node (fxi3) at (4,6) {$\bullet$} ;
\node (fxi4) at (4,9) {$\bullet$} ;
\node (fxi+10) at (6,0){$\bullet$};
\node (fxi+11) at (6,1.5){$\bullet$};
\node (fxi+12) at (6,4.5) {$\bullet$};
\node (fxi+13) at (6,6) {$\bullet$} ;
\node (fxi+14) at (6,9) {$\bullet$} ;
\node (fpi0) at (10,0){$\bullet$};
\node (fpi1) at (10,1.5){$\bullet$};
\node (fpi2) at (10,4.5) {$\bullet$};
\node (fpi3) at (10,6) {$\bullet$} ;
\node (fpi4) at (10,9) {$\bullet$} ;

\draw   (f00) edge [->] (f01);
\draw[dashed]    (f01) edge [->] (f02);
\draw       (f02) edge [->] (f03);
\draw[dashed]       (f03) edge [->] (f04);

\draw[dashed]   (f00) edge [->] (fxi0);
\draw      (fxi0) edge [->] (fxi+10);
\draw[dashed]       (fxi+10) edge [->] (fpi0);

\draw[dashed]   (f01) edge [->] (fxi1);
\draw        (fxi1) edge [->] (fxi+11);
\draw[dashed]        (fxi+11) edge [->] (fpi1);

\draw[dashed]  (f02) edge [->] (fxi2);
\draw        (fxi2) edge [->] (fxi+12);
\draw[dashed]        (fxi+12) edge [->] (fpi2);

\draw[dashed]  (f03) edge [->] (fxi3);
\draw       (fxi3) edge [->] (fxi+13);
\draw[dashed]      (fxi+13) edge [->] (fpi3);

\draw[dashed]  (f04) edge [->] (fxi4);
\draw        (fxi4) edge [->] (fxi+14);
\draw[dashed]        (fxi+14) edge [->] (fpi4);

\node at (-0.35,0) {$V_{0,1}$};
\node at (-0.35,1.5) {$V_{1,1}$};
\node at (-0.35,4.5) {$V_{\alpha,1}$};
\node at (-0.5,6) {$V_{\alpha+1,1}$};
\node at (-0.35,9) {$V_{\gamma,1}$};

\node at (4,0.3) {$V_{0,\xi}$};
\node at (6,0.3) {$V_{0,\xi+1}$};

\node at (4,1.8) {$V_{1,\xi}$};
\node at (6,1.8) {$V_{1,\xi+1}$};

\node at (4,4.8) {$V_{\alpha,\xi}$};
\node at (6,4.8) {$V_{\alpha,\xi+1}$};

\node at (4,6.3) {$V_{\alpha+1,\xi}$};
\node at (6,6.3) {$V_{\alpha+1,\xi+1}$};

\node at (4,9.3) {$V_{\gamma,\xi}$};
\node at (6,9.3) {$V_{\gamma,\xi+1}$};

\node at (10,0.3) {$V_{0,\pi}$};
\node at (10,1.8) {$V_{1,\pi}$};
\node at (10,4.8) {$V_{\alpha,\pi}$};
\node at (10,6.3) {$V_{\alpha+1,\pi}$};
\node at (10,9.3) {$V_{\gamma,\pi}$};

\node at (5,-0.25) {$\Qnm_{0,\xi}=\{0\}$};
\node at (5,1.25) {$\Qnm_{1,\xi}=\{0\}$};
\node at (5,4.25) {$\Qnm_{\alpha,\xi}=\{0\}$};
\node at (5,5.75) {$\Qnm_{\Delta(\xi),\xi}=\subiii{\Qnm^*_\xi}$};
\node at (5,8.75) {$\Qnm_{\gamma,\xi}=\subiii{\Qnm^*_\xi}$};



}
\end{tikzpicture}
\caption{A simple matrix iteration}
\label{matrixuf}
\end{figure}

\begin{lemma}[{\cite[Lemma~5]{BrF}, see also~\cite[Cor.~2.6]{mejiavert}}]\label{realint}
 Assume that\/ $\Por_{\gamma, \pi}$ is a simple matrix iteration as in~\autoref{f1} with\/ $\cf(\gamma)>\omega$.
Then, for any $\xi\leq\pi$,
\begin{enumerate}[label=\rm (\alph*)]
\item
$\Por_{\gamma,\xi}$ is the direct limit of $\seq{\Por_{\alpha,\xi}}{\alpha<\gamma}$, and
\item
if $\eta<\cf(\gamma)$ and $\dot{f}$ is a\/ $\Por_{\gamma,\xi}$-name of a function from $\eta$ into\/ $\bigcup_{\alpha<\gamma}V_{\alpha,\xi}$ then $\dot{f}$ is forced to be equal to a\/ $\Por_{\alpha,\xi}$-name for some $\alpha<\gamma$. In particular, the reals in $V_{\gamma,\xi}$ are precisely the reals in\/ $\bigcup_{\alpha<\gamma}V_{\alpha,\xi}$.
\end{enumerate}
\end{lemma}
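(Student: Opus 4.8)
The plan is to prove part~(a) by induction on $\xi\le\pi$ and then to deduce part~(b) from~(a) together with the countable chain condition of the matrix and the hypothesis $\cf(\gamma)>\omega$.

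For~(a) I would run the induction on $\xi$. The case $\xi=0$ is trivial, since every $\Por_{\alpha,0}$ is the trivial poset. For a successor $\xi=\zeta+1$ recall that a condition of $\Por_{\gamma,\zeta+1}=\Por_{\gamma,\zeta}\ast\dot\Qor^*_\zeta$ is a pair $(p,\dot q)$ with $p\in\Por_{\gamma,\zeta}$ and $\dot q$ a $\Por_{\Delta(\zeta),\zeta}$-name for a member of $\dot\Qor^*_\zeta$ (this is legitimate because $\dot\Qor^*_\zeta$ is a $\Por_{\Delta(\zeta),\zeta}$-name and $\Por_{\Delta(\zeta),\zeta}\subsetdot\Por_{\gamma,\zeta}$). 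By the induction hypothesis $p$ belongs to $\Por_{\alpha,\zeta}$ for some $\alpha<\gamma$, and since $\Delta\colon\pi\to\gamma$ we have $\Delta(\zeta)<\gamma$, so $\alpha':=\max\{\alpha,\Delta(\zeta)\}<\gamma$ and $(p,\dot q)\in\Por_{\alpha',\zeta+1}$. Together with $\Por_{\alpha,\zeta+1}\subsetdot\Por_{\gamma,\zeta+1}$ this gives $\Por_{\gamma,\zeta+1}=\limdir_{\alpha<\gamma}\Por_{\alpha,\zeta+1}$. For $\xi$ limit, $\Por_{\gamma,\xi}=\bigcup_{\eta<\xi}\Por_{\gamma,\eta}$, and any $p\in\Por_{\gamma,\eta}$ lies, by the induction hypothesis, in $\Por_{\alpha,\eta}\subseteq\Por_{\alpha,\xi}$ for some $\alpha<\gamma$; the reverse inclusion is immediate. (No restriction on $\cf(\gamma)$ is needed here.)

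For~(b) fix $\xi$ and use~(a): $\Por_{\gamma,\xi}$ is the direct limit of the ccc posets $\Por_{\alpha,\xi}$, $\alpha<\gamma$, and has the ccc (this holds for the whole matrix; see the references quoted after~\autoref{f1}). I would first isolate the single-object case: \emph{for every $\Por_{\gamma,\xi}$-name $\dot x$ with $\Vdash\dot x\in\bigcup_{\alpha<\gamma}V_{\alpha,\xi}$ there is $\alpha<\gamma$ such that $\dot x$ is forced equal to a $\Por_{\alpha,\xi}$-name}. To see this, choose a $\Por_{\gamma,\xi}$-name $\dot\beta$ for an ordinal $<\gamma$ with $\Vdash\dot x\in V_{\dot\beta,\xi}$; by ccc there is a countable $C\in V$, $C\subseteq\gamma$, with $\Vdash\dot\beta\in\check C$, and since $\cf(\gamma)>\omega$ the ordinal $\beta^*:=\sup C$ is $<\gamma$ and satisfies $\Vdash\dot x\in V_{\beta^*,\xi}$ (because $\Por_{\beta,\xi}\subsetdot\Por_{\beta^*,\xi}$ gives $V_{\beta,\xi}\subseteq V_{\beta^*,\xi}$ for every $\beta\in C$). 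Finally $\Por_{\beta^*,\xi}\subsetdot\Por_{\gamma,\xi}$ and $\dot x$ is forced to lie in the $\Por_{\beta^*,\xi}$-generic extension, so by the standard fact that a name forced into the extension by a complete subforcing is forced equal to a name for that subforcing (see e.g.~\cite{kunen}) we obtain a $\Por_{\beta^*,\xi}$-name, as desired.

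To complete~(b), apply the single-object case to each $\dot f(i)$, $i<\eta$, obtaining $\alpha_i<\gamma$; since $\eta<\cf(\gamma)$, the ordinal $\alpha:=\sup_{i<\eta}\alpha_i$ is $<\gamma$, and then $\dot f$ is forced equal to a $\Por_{\alpha,\xi}$-name. For the ``in particular'', a real of $V_{\gamma,\xi}$ is $\dot r[G]$ for some $\Por_{\gamma,\xi}$-name $\dot r$ for an element of $\baire$; viewing $\dot r$ as a name for a function $\omega\to\omega$, all of whose values lie in $\omega\subseteq V\subseteq V_{0,\xi}$, and using $\omega<\cf(\gamma)$, part~(b) yields $\alpha<\gamma$ with $\dot r$ forced equal to a $\Por_{\alpha,\xi}$-name, i.e.\ $\dot r[G]\in V_{\alpha,\xi}$; conversely $V_{\alpha,\xi}\subseteq V_{\gamma,\xi}$ since $\Por_{\alpha,\xi}\subsetdot\Por_{\gamma,\xi}$. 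The only step that is not pure iteration bookkeeping is the ``capturing'' passage inside the single-object case — a $\Por_{\gamma,\xi}$-name forced into the extension by $\Por_{\alpha,\xi}$ is forced equal to a $\Por_{\alpha,\xi}$-name — which relies essentially on the embedding being complete, together with the combination of ccc and $\cf(\gamma)>\omega$ used to reflect $\dot\beta$ below $\gamma$; I expect that to be the main point, everything else being routine.
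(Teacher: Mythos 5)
The paper gives no proof of \autoref{realint} — it is quoted from~\cite{BrF} and~\cite{mejiavert} — so your argument has to stand on its own. Part~(a) is fine (the only nitpick: you do need $\gamma$ to be a limit ordinal for the direct-limit identity, which $\cf(\gamma)>\omega$ supplies, so ``no restriction on $\cf(\gamma)$'' is slightly overstated). The reduction of~(b) to the single-object case and the ``in particular'' are also fine. The problem is inside the single-object case.

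The ``standard fact'' you invoke — that a $\Por_{\gamma,\xi}$-name $\dot x$ which is \emph{forced} to lie in the extension by the complete suborder $\Por_{\beta^*,\xi}$ must be forced equal to a single $\Por_{\beta^*,\xi}$-name — is false. Already for $\Por$ trivial and $\Qor$ Cohen forcing, the first bit $\dot x$ of the Cohen real satisfies $\Vdash\dot x\in\{0,1\}\subseteq V$ but equals no ground-model value. In the matrix setting, let $\dot x$ be $0$ or $1$ according to a bit of a generic added strictly above level $\beta^*$: then $\Vdash\dot x\in V\subseteq V_{\beta^*,\xi}$, yet $\dot x$ is forced equal to no single $\Por_{\beta^*,\xi}$-name, and the $\alpha$ witnessing the lemma must be strictly larger than your $\beta^*$. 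What \emph{is} true is only the local version: densely many $q$ force $\dot x=\dot y_q$ for some $\Por_{\beta^*,\xi}$-name $\dot y_q$ depending on $q$ (decide a witness for the existential statement ``there is a $\Por_{\beta^*,\xi}$-name evaluating to $\dot x$''). To conclude you must glue these along a maximal antichain $A$ of such $q$'s: $A$ is countable by the ccc, each $q\in A$ lies in some $\Por_{\alpha_q,\xi}$ by part~(a), and $\alpha:=\sup(\{\beta^*\}\cup\set{\alpha_q}{q\in A})<\gamma$ since $\cf(\gamma)>\omega$; then $A$ is a maximal antichain of $\Por_{\alpha,\xi}$ (by completeness of the embedding) and the name obtained by mixing the $\dot y_q$ over $A$ is a $\Por_{\alpha,\xi}$-name forced equal to $\dot x$. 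This antichain-gluing is the actual content of the cited Lemma~5 of~\cite{BrF} / Cor.~2.6 of~\cite{mejiavert}; your reflection of $\dot\beta$ below $\gamma$ via ccc and $\cf(\gamma)>\omega$ is a correct ingredient but cannot replace it. (Once you glue along countable antichains you can in fact dispense with $\dot\beta$ altogether and treat all $i<\eta$ simultaneously, which is how the standard proof runs.)
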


Using a Polish relational system that is Tukey-equivalent with $\Cbf_\Mwf$ (see \autoref{b0}~\ref{b0:5}) we have the following result.

\begin{theorem}[{\cite[Thm.~5.4]{CM}}]\label{matsizebd}
Let\/ $\Por_{\gamma, \pi}$ be a simple matrix iteration as in~\autoref{f1}. Assume that, for any $\alpha<\gamma$, there is some $\xi_\alpha<\pi$ such that\/ $\Por_{\alpha+1,\xi_\alpha}$ adds a~Cohen real $\dot{c}_\alpha\in X$ over~$V_{\alpha,\xi_\alpha}$.
Then, for any $\alpha<\gamma$, $\Por_{\alpha+1,\pi}$ forces that $\dot{c}_{\alpha}$ is Cohen over~$V_{\alpha,\pi}$.

In addition, if\/ $\cf(\gamma)>\omega_1$ and $f\colon \cf(\gamma)\to\gamma$ is increasing and cofinal, then\/ $\Por_{\gamma,\pi}$ forces that\/ $\set{\dot{c}_{f(\zeta)}}{\zeta<\cf(\gamma)}$ is a strongly\/ $\cf(\gamma)$-$\Cbf_\Mwf$-unbounded family. In particular, $\Por_{\gamma,\pi}$~forces $\gamma\leqT \Cbf_\Mwf$ and\/ $\non(\Mwf)\leq\cf(\gamma)\leq\cov(\Mwf)$.
\end{theorem}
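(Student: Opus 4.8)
The plan is to read ``Cohen real'' through the Polish relational system $\Mbf\eqT\Cbf_\Mwf$ from \autoref{b0}~\ref{b0:5}: recall that $c\in\cantor$ is Cohen over a transitive model $M$ exactly when $c$ is $\Mbf$-unbounded over $M$, equivalently when $c$ misses every meager Borel set coded in $M$ --- a statement about $c$ and the set $M$ only, hence upward absolute. The theorem then splits into a single-column Cohen-preservation induction (the substance) and an essentially formal deduction of strong unboundedness and its consequences.

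\emph{The column.} Fix $\alpha<\gamma$. I would prove by induction on $\xi\in[\xi_\alpha,\pi]$ that $\Por_{\alpha+1,\xi}$ forces $\dot c_\alpha$ to be Cohen over $V_{\alpha,\xi}$; the case $\xi=\pi$ is the first assertion, and by upward absoluteness and $\Por_{\alpha+1,\pi}\subsetdot\Por_{\gamma,\pi}$ it then also holds in $V_{\gamma,\pi}$. The base $\xi=\xi_\alpha$ is the hypothesis. For $\xi\to\xi+1$: by \autoref{f1} each of $\Qnm_{\alpha,\xi},\Qnm_{\alpha+1,\xi}$ is either $\{0\}$ or the same name $\Qnm^*_\xi$, according to how $\Delta(\xi)$ compares with $\alpha$ and $\alpha+1$. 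If $\Delta(\xi)>\alpha+1$ nothing changes. If $\Delta(\xi)=\alpha+1$ then $V_{\alpha,\xi+1}=V_{\alpha,\xi}$ and upward absoluteness of ``Cohen over $V_{\alpha,\xi}$'' closes the step. If $\Delta(\xi)\le\alpha$ then $\Qnm^*_\xi$ is a $\Por_{\Delta(\xi),\xi}$-name, hence (forced to be) a ccc poset already lying in $V_{\alpha,\xi}$; a $\Qnm^*_\xi$-generic over $V_{\alpha+1,\xi}$ is in particular generic over the submodel $V_{\alpha,\xi}[\dot c_\alpha]$, so the pair consisting of the Cohen filter of $\dot c_\alpha$ and that generic is generic for the product over $V_{\alpha,\xi}$, whence $\dot c_\alpha$ remains Cohen over $V_{\alpha,\xi+1}$. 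For limit $\xi$: a $\Por_{\alpha,\xi}$-name for a dense open subset of Cohen forcing reflects, when $\cf(\xi)>\omega$, to a $\Por_{\alpha,\xi'}$-name for some $\xi'<\xi$ (by ccc and the direct-limit structure of \autoref{f1}, cf.\ \autoref{realint}), so the inductive hypothesis at $\xi'$ applies; when $\cf(\xi)=\omega$ a direct finite-support genericity argument does the same job, since any condition and any finite fragment of the name live below some $\xi'<\xi$.

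\emph{Strong unboundedness.} Now assume $\cf(\gamma)>\omega_1$ and let $f\colon\cf(\gamma)\to\gamma$ be increasing and cofinal. Working in $V_{\gamma,\pi}$, take an arbitrary $\dot z=(\dot I,\dot y)\in\Ior\times\cantor$. Since $\dot z$ is a real and $\omega<\cf(\gamma)$, \autoref{realint}(b) furnishes $\alpha<\gamma$ with $\dot z\in V_{\alpha,\pi}$. For every $\zeta<\cf(\gamma)$ with $f(\zeta)\ge\alpha$, the column step applied to $f(\zeta)$ makes $\dot c_{f(\zeta)}$ Cohen over $V_{f(\zeta),\pi}\supseteq V_{\alpha,\pi}\ni\dot z$, hence $\Mbf$-unbounded over a model containing $\dot z$, i.e.\ $\dot c_{f(\zeta)}\not\sqsubm\dot z$. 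Thus $\set{\zeta<\cf(\gamma)}{\dot c_{f(\zeta)}\sqsubm\dot z}\subseteq\set{\zeta<\cf(\gamma)}{f(\zeta)<\alpha}$, which is a bounded, hence ${<}\cf(\gamma)$-sized, subset of the regular cardinal $\cf(\gamma)$. As the index set has size $\cf(\gamma)$, this says precisely that $\set{\dot c_{f(\zeta)}}{\zeta<\cf(\gamma)}$ is strongly $\cf(\gamma)$-$\Mbf$-unbounded, equivalently (as $\Mbf\eqT\Cbf_\Mwf$) strongly $\cf(\gamma)$-$\Cbf_\Mwf$-unbounded.

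\emph{Consequences and the hard point.} By \autoref{b2}(a) the existence of such a family yields $\Cbf_{[\cf(\gamma)]^{<\cf(\gamma)}}\leqT\Cbf_\Mwf$, and since $\cf(\gamma)$ is regular one has $\Cbf_{[\cf(\gamma)]^{<\cf(\gamma)}}\eqT\cf(\gamma)\eqT\gamma$ as directed preorders, so $\gamma\leqT\Cbf_\Mwf$. Passing to the associated cardinals (using $\bfrak(\Cbf_\Mwf)=\non(\Mwf)$, $\dfrak(\Cbf_\Mwf)=\cov(\Mwf)$ from \autoref{exm:Iwf}, and $\bfrak(\gamma)=\dfrak(\gamma)=\cf(\gamma)$) gives $\non(\Mwf)\le\cf(\gamma)\le\cov(\Mwf)$. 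The one genuinely delicate point is the column induction --- concretely, boosting ``$\dot c_\alpha$ is Cohen over $V_{\alpha,\xi_\alpha}$'' all the way up to ``$\dot c_\alpha$ is Cohen over $V_{\alpha,\pi}$''. This is exactly where the coherence of the matrix is used: no real predicting $\dot c_\alpha$ may be added along the $\alpha$-th column below $\pi$, and the successor/limit bookkeeping above is the form that this takes. Once the column claim is in hand, the rest is routine manipulation of relational systems.
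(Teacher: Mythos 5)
Your decomposition (column induction on $\xi$, then a formal deduction of strong unboundedness) is the standard one, and most of it is sound: the three successor subcases are handled correctly (in particular the product-lemma argument when $\Delta(\xi)\le\alpha$), the limit case of uncountable cofinality correctly reduces to earlier stages, and the passage from the column claim to strong $\cf(\gamma)$-$\Cbf_\Mwf$-unboundedness via \autoref{realint} and \autoref{b2} is routine and correct. Note that the paper itself gives no proof here --- it cites \cite[Thm.~5.4]{CM} --- so the comparison is with that source, whose entire technical content is concentrated in exactly the step you wave at.

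That step is the limit of countable cofinality, and your justification for it --- ``any condition and any finite fragment of the name live below some $\xi'<\xi$'' --- does not work. The object to be avoided is coded by a $\Por_{\alpha,\xi}$-name $(\dot I,\dot y)$ (equivalently a name for a nowhere dense tree) that does \emph{not} reduce to any $\Por_{\alpha,\xi'}$-name when $\cf(\xi)=\omega$, and finite fragments of it are useless: nowhere-density is not witnessed by finite fragments, and $\dot c_\alpha$ is already fully determined by $G_{\alpha+1,\xi_\alpha}$, so no genericity argument can steer it into the dense sets of a name it has never seen. Nor can you replace $\dot T$ by the ``hull'' $\{t:\exists r\in\Por_{\alpha,\xi}/G_{\alpha,\xi'}\ (r\Vdash t\in\dot T)\}$, since that hull need not be nowhere dense. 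The honest argument (Blass--Shelah; \cite[Lem.~11]{BrF}; the relevant lemmas of \cite{CM}) goes as follows: assuming $p\Vdash_{\alpha+1,\xi}\dot c_\alpha\sqsubm_{n_0}(\dot I,\dot y)$ with $p\in\Por_{\alpha+1,\xi'}$, one constructs \emph{in} $V_{\alpha,\xi'}$ a decreasing sequence $\seq{r_j}{j<\omega}$ in the quotient $\Por_{\alpha,\xi}/G_{\alpha,\xi'}$ deciding longer and longer pieces of $(\dot I,\dot y)$, assembles from the decided values an interpolating pair $(I^*,y^*)\in V_{\alpha,\xi'}$, applies the inductive hypothesis to $(I^*,y^*)$ to get a block on which $\dot c_\alpha$ agrees with $y^*$, and finally amalgamates the condition witnessing this with the corresponding $r_j$ inside $\Por_{\alpha+1,\xi}$ --- this last amalgamation being precisely where the coherence of the simple matrix iteration ($\Por_{\alpha,\xi}$ and $\Por_{\alpha+1,\xi'}$ amalgamate over $\Por_{\alpha,\xi'}$) is used. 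You correctly flag the column induction as ``the hard point,'' but you then assert it rather than prove it; supplying this interpolation-and-amalgamation argument is what would complete the proof.
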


 Mej\'ia~\cite{mejiavert} introduced the notion of \emph{ultrafilter-linkedness} (abbreviated uf-linkedness).
He proved that no $\sigma$-uf-linked poset adds dominating reals and that such a poset preserves a certain type of mad (maximal almost disjoint) families. 
These results where improved in~\cite{BCM}, which motivated the construction of matrix iterations of ${<}\theta$-uf-linked posets. 

The following notion formalizes the matrix iterations with ultrafilters from~\cite{BCM}. The property ``${<}\theta$-uf-linked" is used as a black box, i.e.\ there is no need to review its definition, but it is enough to present the relevant examples and facts (with proper citation).

\begin{definition}[{\cite[Def.~4.2]{BCM}}]\label{f4}
Let $\theta\geq\aleph_1$ and let $\Por_{\gamma, \pi}$ be a simple matrix iteration as in~\autoref{f1}. Say that $\Por_{\gamma, \pi}$ is a~\emph{${<}\theta$-uf-extendable matrix iteration} if for each $\xi<\pi$, $\Por_{\Delta(\xi),\xi}$ forces that $\Qnm_\xi$ is a ${<}\theta$-uf-linked poset.
\end{definition}

\begin{example}\label{exm:ufl}
The following are the instances of ${<}\theta$-uf-linked posets that we use in our applications.
\begin{enumerate}[label=\rm (\arabic*)]

\item Any poset of size $\mu<\theta$ is ${<}\theta$-uf-linked. In particular, Cohen
forcing is $\sigma$-uf-linked (i.e.\ ${<}\aleph_1$-uf-linked), see~\cite[Rem.~3.3~(5)]{BCM}.

\item Random forcing is $\sigma$-uf-linked~\cite[Lem.~3.29 \& Lem.~5.5]{mejiamatrix}.

\item Let $b$ and $h$ be as in~\autoref{c7}. Then $\Eor_b^h$ is $\sigma$-uf-linked~\cite[Lem.~3.8]{BCM}.

\item $\Qor_f$ is $\sigma$-uf-linked~\cite[Thm.~3.25]{CMR2}.
\end{enumerate}
\end{example}

\begin{theorem}[{\cite[Thm.~4.4]{BCM}}]\label{mainpres}
Assume that $\theta\leq\mu$ are uncountable cardinals with $\theta$~regular. Let\/ $\Por_{\gamma,\pi}$ be a ${<}\theta$-uf-extendable matrix iteration as in~\autoref{f4} such that
\begin{enumerate}[label=\rm (\roman*)]
\item $\gamma\geq\mu$ and $\pi\geq\mu$,
\item for each $\alpha<\mu$, $\Delta(\alpha)=\alpha+1$ and $\Qnm_\alpha$ is Cohen forcing, and
\item $\dot c_\alpha$ is a $\Por_{\alpha+1,\alpha+1}$-name of the Cohen real in $\baire$ added by\/ $\Qnm_\alpha$.
\end{enumerate}
Then\/ $\Por_{\alpha,\pi}$ forces that\/ $\set{\dot c_\alpha}{\alpha<\mu}$ is $\theta$-$\baire$-unbounded, in particular, $\Cbf_{[\mu]^{<\theta}}\leqT \baire$.
\end{theorem}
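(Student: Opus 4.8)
The plan is to recognise the full matrix poset $\Por_{\gamma,\pi}$ as an ordinary finite support (FS) iteration of $\theta$-$\baire$-good ccc posets which adds a Cohen real at each of its first $\mu$ stages, and then to invoke \autoref{b8} for the Polish relational system $\baire=\la\baire,\baire,{\leq^*}\ra$ (a Prs by \autoref{b0}~\ref{b0:3}). It suffices to prove the (formally stronger) assertion that $\Por_{\gamma,\pi}$ forces $\set{\dot c_\alpha}{\alpha<\mu}$ to be a \emph{strongly} $\theta$-$\baire$-unbounded family in the sense of \autoref{b1}; the ``in particular'' part then follows from \autoref{b2}(a).

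First I would unwind the matrix. Since $\Delta(\xi)<\gamma$ for every $\xi<\pi$, clause~(V) of \autoref{f1} gives $\dot{\mathbb{Q}}_{\gamma,\xi}=\Qnm^*_\xi$ for all $\xi$, so $\Por_{\gamma,\pi}$ is literally the FS iteration $\seq{\Por_{\gamma,\xi},\Qnm^*_\xi}{\xi<\pi}$. By hypothesis~(ii) the iterand $\Qnm^*_\xi$ is Cohen forcing for $\xi<\mu$, and (matching hypothesis~(iii)) $\dot c_\alpha$ is then a name for the Cohen real added at stage $\alpha<\mu$ of this iteration, hence Cohen over $V_{\gamma,\alpha}$; for $\mu\le\xi<\pi$ the iterand $\Qnm^*_\xi$ is a name for a ccc ${<}\theta$-uf-linked poset by \autoref{f4}. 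I would then verify the hypotheses of \autoref{b8}: Cohen forcing is $\theta$-$\baire$-good by \autoref{b6} (being countable), and every ${<}\theta$-uf-linked poset is $\theta$-$\baire$-good — the standard preservation fact for uf-linkedness from \cite{mejiavert,BCM} (a ${<}\theta$-uf-linked poset adds no real $\leq^*$-dominating a ground-model family of size ${<}\theta$; this refines the remark on $\sigma$-$\Fr$-linked posets in \autoref{b0}~\ref{b0:3}), and it is here that hypothesis \autoref{f4} is genuinely used. All iterands are non-trivial and ccc, hence $\theta$-cc since $\theta\ge\aleph_1$.

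Then I would apply \autoref{b8} to this FS iteration: its first $\mu$ stages supply a Cohen real $\dot c_\alpha$, Cohen over $V_{\gamma,\alpha}$, in particular one in each of the $\mu$-many blocks $[\omega\beta,\omega\beta+\omega)\subseteq\mu$. \autoref{b8} then yields that $\Por_{\gamma,\pi}$ is $\theta$-$\baire$-good and that the associated family of Cohen reals is strongly $\theta$-$\baire$-unbounded, i.e.\ $\Cbf_{[\mu]^{<\theta}}\leqT\baire$ (using only the blocks contained in $\mu$; since $\mu\subseteq\pi$ this gives $\Cbf_{[\mu]^{<\theta}}\leqT\Cbf_{[\pi]^{<\theta}}\leqT\baire$, in line with the ``moreover'' clause of \autoref{b8}). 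Since this family is exactly $\set{\dot c_\alpha}{\alpha<\mu}$, we are done.

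I expect the substance to lie in the two inputs quoted above rather than in the bookkeeping. The first is that ${<}\theta$-uf-linked posets are $\theta$-$\baire$-good: this is the core of the uf-linkedness theory and the sole place where the uf-extendability hypothesis \autoref{f4} enters. The second, packaged inside \autoref{b8}, is the relativised preservation statement: each $\dot c_\alpha$ is a priori only unbounded over $V_{\gamma,\alpha}$, and to conclude that no real appearing later $\leq^*$-dominates $\theta$-many of the $\dot c_\alpha$ one needs that every tail $\Por_{\gamma,\pi}/\Por_{\gamma,\alpha+1}$ is again an FS iteration of $\theta$-$\baire$-good posets, plus an application in $V_{\gamma,\alpha+1}$ of the preservation theory recalled in \autoref{sec:s3}; the naive ``$\baire$-goodness over a single model'' argument does not suffice. (One must also, as usual, watch the cofinality of $\pi$ when invoking \autoref{b8}; in the applications $\Por_{\gamma,\pi}$ has uncountable cofinality. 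The authors may instead prefer a self-contained argument exploiting the ultrafilter-limit structure of the matrix rather than this reduction.)
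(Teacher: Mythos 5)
The paper offers no proof of this statement: it is quoted as \cite[Thm.~4.4]{BCM}, where it is proved by the ultrafilter-extension construction along the matrix. Your proposed reduction to \autoref{b8} has a genuine gap at its central step. Applying \autoref{b8} to the top-row iteration $\seq{\Por_{\gamma,\xi},\Qnm^*_\xi}{\xi<\pi}$ requires each iterand $\Qnm^*_\xi$ to be $\theta$-$\baire$-good \emph{as a poset over $V_{\gamma,\xi}$}, whereas \autoref{f4} only provides that $\Qnm^*_\xi$ is ${<}\theta$-uf-linked \emph{in the submodel $V_{\Delta(\xi),\xi}$}. The implication ``${<}\theta$-uf-linked $\Rightarrow$ $\theta$-$\baire$-good'' is indeed available, but only over the model in which the uf-linked decomposition is witnessed; neither uf-linkedness nor the resulting goodness transfers upward, since in $V_{\gamma,\xi}$ the witnessing ultrafilters cease to be ultrafilters and there are new $\omega$-sequences of conditions (and new names for reals) about which the old limits say nothing. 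Making the iterands behave correctly over the full row $\gamma$ is precisely the content of the ultrafilter-extension theorem of \cite{BCM}: a simultaneous recursion on $(\alpha,\xi)$ producing a coherent system of ultrafilters across the whole matrix, which is then used to show directly that no real of $V_{\alpha,\pi}$ dominates $\dot c_\alpha$. This is why \autoref{mainpres} is a theorem about matrix iterations and not a corollary of the FS preservation theory of \autoref{sec:s3}; had your transfer been automatic, the applications in \autoref{sec:s5} could have treated $\baire$ by \autoref{b8} exactly as they treat $\Lc^*$ and $\Cn$, and the uf-machinery would be superfluous. You correctly flag a relativization issue in the \emph{column} direction (goodness of the tails), but the essential difficulty is in the \emph{row} direction, and your write-up does not address it.

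Two smaller remarks. The ``moreover'' clause of \autoref{b8} yields a strongly $\theta$-unbounded family with one Cohen real per limit block; upgrading this to the full family $\set{\dot c_\alpha}{\alpha<\mu}$ uses that each block is countable and $\theta$ is regular uncountable (so $\theta$-many dominated $\dot c_\alpha$'s would meet $\theta$-many blocks), which should be said explicitly. Also, the conclusion as printed should read $\Por_{\gamma,\pi}$ rather than $\Por_{\alpha,\pi}$, and ``$\theta$-$\baire$-unbounded'' is to be read as ``strongly $\theta$-$\baire$-unbounded'' in the sense of \autoref{b1}, as you correctly assume.
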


Prior to proving our main results, we review some other posets that we will use.

\begin{definition}\label{f0} Define the following forcing notions:
\begin{enumerate}[label=\normalfont(\arabic*)]
\item \emph{Localization forcing} is the poset defined by $\Loc:=\set{(n,\varphi)\in\omega\times \Swf(\omega,\id_\omega)}{\exists m<\omega\,\forall i<\omega\colon |\varphi(i)|\leq m}$ ordered by $(n',\varphi')\leq(n,\varphi)$ iff $n\leq n'$, $\varphi'\frestr n=\varphi\frestr n$ and $\varphi(i)\subseteq\varphi'(i)$ for every $i<\omega$. This forcing is used to increase $\add(\Nwf)$. Recall that $\Loc$ is $\sigma$-linked, hence ccc.

\item\label{defposet2} \emph{Hechler forcing} is defined by $\Dor=\omega^{<\omega}\times\baire$,
ordered by $(t,g)\leq(s,f)$ if $s\subseteq t$, $f\leq g$ and $f(i)\leq t(i)$ for all $i\in |t|\menos|s|$. This forcing is used to increase $\bfrak$. Recall that $\Dor$ is $\sigma$-centered.


\item $\Cor_\lambda:=\Fn_{<\aleph_0}(\lambda\times\omega,2)$ is the poset adding $\lambda$-many Cohen reals.
\end{enumerate}
\end{definition}

We are ready to prove~\autoref{Thm:a0}.

\begin{theorem}\label{appl:I}
Let $\theta_1\leq\theta_2\leq\theta_3\leq\theta_4\leq\theta_5\leq \theta_6\leq\theta_7=\theta_7^{\aleph_0}$ be uncountable regular cardinals and assume that\/ $\cof([\theta_7]^{<\theta_i})=\theta_7$ for $1 \leq i \leq 5$.
Then there is a~ccc poset forcing:
\begin{enumerate}[label=\rm (\alph*)]
\item\label{e0:a}
$\cfrak=\theta_7$;
\item\label{e0:b} $\Cbf_{[\theta_7]^{<{\theta_1}}}\leqT\Lc^*$, $\SNwf\leqT\Cbf_{[\theta_7]^{<{\theta_2}}}$,  $\Cbf_{[\theta_7]^{<{\theta_3}}}\leqT\Cbf_{\Nwf}^{\perp}$, $\Cbf_{[\theta_7]^{<{\theta_4}}}\leqT\Cbf_{\SNwf}^{\perp}$, and\/ $\Cbf_{[\theta_7]^{<{\theta_5}}}\leqT\baire$;

\item\label{e0:c} $\Lc^*\leqT\Cbf_{[\theta_7]^{<{\theta_1}}}$, $\Cbf_{[\theta_7]^{<{\theta_2}}}\leqT\SNwf$,  $\Cbf_{\Nwf}^{\perp}\leqT\Cbf_{[\theta_7]^{<{\theta_3}}}$, $\Cbf_{\SNwf}^{\perp}\leqT\Cbf_{[\theta_7]^{<{\theta_4}}}$,  and $\baire\leqT\Cbf_{[\theta_7]^{<{\theta_4}}}$;
\item\label{e0:d}
$\theta_6\leqT\Cbf_\Mwf$, $\theta_7\leqT \Cbf_\Mwf$, and\/ $\Ed\leqT \theta_7\times\theta_6$;
\item\label{e0:e}
$\theta_7^{\theta_7}\leqT\SNwf$.
\end{enumerate}
In particular, it forces
\begin{multline*}
\add(\Nwf)=\theta_1\leq\add(\SNwf)=\theta_2\leq\cov(\Nwf)=
\theta_3\leq\cov(\SNwf)= \supcov=\theta_4 \leq\\
\leq \bfrak=\theta_5
\leq\non(\Mwf)=\theta_6
\leq\cov(\Mwf)=\non(\SNwf)=\dfrak=\cfrak=\theta_7 <\dfrak_{\theta_7} \leq \cof(\SNwf).
\end{multline*}
\end{theorem}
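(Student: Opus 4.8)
The plan is to realize the poset of \autoref{appl:I} as a uf-extendable matrix iteration in the sense of Definitions~\ref{f1}--\ref{f4}, following the template of \cite{BCM,BCM2,CMR2}, and to read off (a)--(e) from the preservation machinery of Sections~\ref{sec:s3}--\ref{sec:s4} together with the matrix results \autoref{realint}, \autoref{matsizebd} and \autoref{mainpres}. Concretely, I would fix a height $\gamma$ and a length $\pi$ (an ordinal with $|\pi|=\theta_7$ and cofinalities of $\gamma,\pi$ tuned to $\theta_6,\theta_7$), a support map $\Delta\colon\pi\to\gamma$, and a bookkeeping function so that, cofinally often, the step-iterand $\Qnm^*_\xi$ is: Cohen forcing $\Cor$ (arranged in the vertical/horizontal pattern required by \autoref{matsizebd} and \autoref{mainpres}); the localization poset $\Loc$, but only ``at level ${<}\theta_1$'', i.e.\ positioned so that over every ${<}\theta_1$-sized set of reals occurring in the construction some later iterand is $\Loc$; similarly $\Qor_f$ (of \autoref{c5}) at level ${<}\theta_2$; random forcing $\Bor$ at level ${<}\theta_3$; the forcing $\Eor_b$ (of \autoref{c7}, cf.\ \autoref{c8}) at level ${<}\theta_4$; Hechler forcing $\Dor$ at level ${<}\theta_5$; and the ED forcing $\Eor^h_b$ at level ${<}\theta_6$.

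For the lower bounds (the Tukey connections of (b)--(e) producing large values) I would use two sources. First, the Cohen reals added along the construction are strong Tukey witnesses: by \autoref{b13}, \autoref{lem:strongCohen}, \autoref{b10}/\autoref{b11}, \autoref{matsizebd} and \autoref{mainpres} they give strongly-$\theta_i$-unbounded families forcing $\Cbf_{[\theta_7]^{<\theta_1}}\leqT\Lc^*$, $\Cbf_{[\theta_7]^{<\theta_3}}\leqT\Cbf_\Nwf^\perp$, $\Cbf_{[\theta_7]^{<\theta_4}}\leqT\Cbf_\SNwf^\perp$, $\Cbf_{[\theta_7]^{<\theta_5}}\leqT\baire$ (the last via the uf-linkedness of the iteration, which is why \autoref{mainpres} applies), and $\theta_6,\theta_7\leqT\Cbf_\Mwf$. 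Second, the $\Loc$-, $\Qor_f$-, $\Bor$-, $\Eor_b$-, $\Dor$- and $\Eor^h_b$-generics, being respectively slalom-, $\Iwf_f$-, random-, $\aLc$-, dominating- and eventually-different-generic over all small sets, give through \autoref{b12}, \autoref{b9} and \autoref{b11} the reversed connections $\Lc^*\leqT\Cbf_{[\theta_7]^{<\theta_1}}$, $\Cbf_{[\theta_7]^{<\theta_2}}\leqT\SNwf$, $\Cbf_\Nwf^\perp\leqT\Cbf_{[\theta_7]^{<\theta_3}}$, $\Cbf_\SNwf^\perp\leqT\Cbf_{[\theta_7]^{<\theta_4}}$, $\baire\leqT\Cbf_{[\theta_7]^{<\theta_4}}$ and $\Ed\leqT\theta_7\times\theta_6$. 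A further application of the $\SNwf$-cofinality analysis of \cite[Sec.~5]{BCM2}, with the cofinality parameter taken as large as $\theta_7^{\theta_7}$, yields $\theta_7^{\theta_7}\leqT\SNwf$; this is (e), and it is compatible with $\cfrak=\theta_7$ because it is a Tukey statement about the ideal $\SNwf$, not about the size of the forcing.

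For the upper bounds — the $\Cbf_{[\theta_7]^{<\theta_i}}$ on the left of the Tukey relations, $\cfrak\le\theta_7$, and the resulting exact equalities — everything rests on goodness, and here the new results of \autoref{sec:s4} are essential. Every iterand other than $\Loc$ is $\sigma$-$\bar\rho$-linked for a sufficiently uniform $\bar\rho$: this is \autoref{example-barrho}, \autoref{Qf-barrho} and \autoref{d8} for $\Bor$, $\Qor_f$, $\Eor_b$ and $\Eor^h_b$, and immediate for the $\sigma$-centered $\Cor$ and $\Dor$; hence by \autoref{d3} (which is \autoref{Thm:a4}) each is $\Lc^*(\Hwf_{\bar\rho,h})$-good, i.e.\ good for a Polish relational system Tukey-equivalent to $\Nwf$ — and $\Loc$, being confined to level ${<}\theta_1$, does not push $\add(\Nwf)$ beyond $\theta_1$. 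The same iterands are $\theta_1$-good for $\Cn$, $\baire$, $\aLc^*(\varrho,\rho)$ and $\Rbf^f_\Gwf$ by \autoref{b0} and \autoref{KOpre}, with $\Loc$ (resp.\ $\Dor$) handled at the thresholds where it is used via $\sigma$-linkedness (resp.\ $\sigma$-centeredness). Plugging these into \autoref{b8}, \autoref{b9} and the matrix preservation theorems gives $\add(\Nwf)\le\theta_1$, $\add(\SNwf)\le\theta_2$, $\cov(\Nwf)\le\theta_3$, $\cov(\SNwf)\le\theta_4$ and $\supcov=\theta_4$ (since $\SNwf\subseteq\Iwf_f$ forces $\cov(\Iwf_f)\le\cov(\SNwf)$, while the $\Eor_b$-blocks push each $\cov(\Iwf_f)$ up to $\theta_4$), $\bfrak\le\theta_5$, $\non(\Mwf)\le\theta_6$, $\non(\SNwf)\le\theta_7$ and $\cfrak\le\theta_7$ (a nice-name count, $|\pi|=\theta_7$ and all iterands of size $\le\theta_7$). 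Matching these with the lower bounds pins every cardinal to its $\theta_i$, and the displayed inequality chain, including $\theta_7=\cfrak<\dfrak_{\theta_7}\le\cof(\SNwf)$, follows.

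The main obstacle is the $\add(\Nwf)=\theta_1$ clause. One must verify $\sigma$-$\bar\rho$-linkedness \emph{simultaneously}, with a single coordinated parameter $h$, for the heterogeneous list $\Bor,\Qor_f,\Eor_b,\Eor^h_b$ — none of which is $\sigma$-centered, so the classical results of \cite{Br,JS} do not suffice — and then interleave the $\Loc$-blocks so that the whole iteration stays $\Lc^*(\Hwf_{\bar\rho,h})$-good up to $\theta_1$ without being more good than that; this is precisely what \autoref{Thm:a4} was designed to make possible, and combining it with \autoref{b8} for the $\Nwf$-relational system is the crux. A secondary, bookkeeping-heavy point is to lay out $\Delta$, $\pi$ and the ``rooms below level $\theta_i$'' so that \autoref{b12}, \autoref{b9}, \autoref{b10}, \autoref{b11}, \autoref{matsizebd} and \autoref{mainpres} all apply to the same iteration and land each characteristic exactly on its $\theta_i$ while the uf-extendability constraint is honoured wherever \autoref{mainpres} is invoked — routine in the style of \cite[Sec.~5]{BCM2}, but where most of the care goes.
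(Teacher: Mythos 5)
Your proposal follows essentially the same route as the paper's proof: a ${<}\theta_5$-uf-extendable matrix iteration whose iterands are Cohen forcing, $\Eor$, and small-model restrictions of $\Loc$, $\Qor_f$, $\Bor$, $\Eor_b$, $\Dor$ governed by bookkeeping, with the lower bounds read off from \autoref{realint}, \autoref{b9}--\autoref{b12}, \autoref{matsizebd} and \autoref{mainpres}, and the upper bounds from goodness --- in particular \autoref{d3} applied to the $\sigma$-$\bar\rho$-linked iterands $\Qor_f$ and $\Eor_b$, which is exactly the paper's crux. The only (cosmetic) divergences: the paper gets $\Lc^*$-goodness of $\Bor$ from Kamburelis (\autoref{b0}~\ref{b0:4}) rather than from \autoref{d3}, derives item~\ref{e0:e} from \cite[Cor.~4.25]{CarMej23}, and --- as your argument also implicitly requires --- keeps the $\Loc$ blocks $\theta_1$-$\Lc^*$-good by restricting to models of size ${<}\theta_1$ (\autoref{b6}), not by $\sigma$-linkedness.
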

\begin{proof}
For each $\rho<\theta_7\theta_6$ denote $\eta_\rho:=\theta_7\rho$. Fix a bijection $g=(g_0, g_1,g_2):\theta_7\to\{0, 1,2,3,4\}\times\theta_7\times\theta_7$ and fix a function $t\colon\theta_7\theta_6\to\theta_7$ such that, for any $\alpha<\theta_7$, the set $\set{\rho<\theta_7\theta_6}{t(\rho)=\alpha}$ is cofinal in $\theta_7\theta_6$.

We construct a ${<}\theta_5$-uf-extendable matrix iteration $\Por$ as follows: First, construct (by recursion) increasing functions $\rho, \varrho, b \in\baire$ such that, for all $k<\omega$,
\begin{enumerate}[label=(\alph*)]
\item $k^{k+1} \leq \rho(k)$,
\item $\sum_{i<\omega}\frac{\rho(i)^i}{\varrho(i)}<\infty$ and
\item $k \varrho(k)^{\rho(k)^k} < b(k)$;
\end{enumerate}
It follows by~\autoref{genlink} that $\Eor_b=\Eor^1_b$ is $(\rho, \varrho^{\rho^\id})$-linked and by~\autoref{d8} that $\Eor_b$ is $\sigma$-$\bar\rho$-linked. On the other hand, for some increasing function $f\in\baire$, by employing~\autoref{Qf-barrho}, it follows that $\Qor_f$ is $\sigma$-$\bar\rho$-linked

We now construct the ${<}\theta_5$-uf-extendable matrix iteration $\Por_{\gamma,\pi}$ with $\gamma=\theta_7$ and $\pi=\theta_7\theta_7\theta_6$. First set,
\begin{enumerate}[label=\rm (C\arabic*)]
\item\label{cohbas} $\Delta(\alpha):=\alpha+1$ and $\Qnm_\alpha$ is Cohen forcing for $\alpha<\theta_7$.
\end{enumerate}
Define the matrix iteration at each $\xi=\eta_\rho+\varepsilon$ for $0<\rho<\theta_7\theta_6$ and $\varepsilon<\theta_7$ as follows. Denote\footnote{We think of $X_1$ as the set of Borel codes of Borel sets with measure zero.}
\[(\Qnm_j^*,X_j):=\begin{cases}
(\Loc,\baire) & \text{if $j=0$,} \\
(\Qor_{f},\SNwf) & \text{if $j=1$,}\\
(\Bor,\Bwf(\cantor)\cap \Nwf) & \text{if $j=2$,}\\
(\Eor_b,\prod b) & \text{if $j=3$,}\\
(\Dor,\baire) & \text{if $j=4$.}
\end{cases}\]
For $j<5$, $0<\rho<\theta_7\theta_6$ and $\alpha<\theta_7$, choose
\begin{enumerate}[label=(E$j$)]
\item\label{Ej} a collection $\set{\Qnm_{j,\alpha,\zeta}^\rho}{\zeta<\theta_7}$ of nice $\Por_{\alpha,\eta_\rho}$-names for posets of the form~$(\Qor^*_j)^N$ for some transitive model $N$ of ZFC with $|N|<\theta_{j+1}$
such that, for any $\Por_{\alpha,\eta_\rho}$-name $\dot F$ of a subset of $X_j$ of size ${<}\theta_{j+1}$, there is some $\zeta<\theta_7$ such that, in $V_{\alpha,\eta_\rho}$, $\Qnm^\rho_{j,\alpha,\zeta}=(\Qor^*_j)^N$ for some $N$ containing $\dot F$ (we explain later why this is possible),
\end{enumerate}
and set
\begin{enumerate}[label=\rm (C\arabic*)]
\setcounter{enumi}{1}
\item
$\Delta(\xi):=t(\rho)$ and $\Qnm_{\xi}:=\Eor^{V_{\Delta(\xi),\xi}}$ when $\xi=\eta_\rho$;

\item
$\Delta(\xi):=g_1(\varepsilon)$ and $\Qnm_{\xi}:=\Qnm^{\rho}_{g(\varepsilon)}$ when $\xi=\eta_\rho+1+\varepsilon$ for some $\varepsilon< \theta_7$.
\end{enumerate}
Clearly, $\Por:=\Por_{\theta_7,\pi}$ satisfies the ccc property. We can now show that $\Por$ forces what we want. Keep in mind that $\Por$ can be obtained by the FS iteration $\seq{\Por_{\theta_7,\xi},\Qnm_{\theta_7,\xi}}{\xi<\pi}$. It should be clear that $\Por$ forces $\cfrak=\theta_7$, so~\ref{e0:a} is done.

Item~\ref{e0:b}: We start by proving that $\Por$ forces $\Cbf_{[\theta_7]^{<\theta_1}}\leqT\Lc^*$, as the rest is proved in a~similar way. For this
argument we interpret $\Por$ as the iteration $\seq{\Por_{\theta_7,\xi},\Qnm_{\theta_7,\xi}}{\xi<\pi}$, so it suffices to argue $\Por$ forces $\Cbf_{[\theta_7]^{<\theta_1}}\leqT\Lc^*$ thanks~\autoref{b8}. In order to achieve this, it is enough to verify that, for each $\xi<\pi$, $\Por_{\gamma, \xi}$ forces that $\Qnm_{\gamma, \xi}$ is $\theta_1$-$\Lc^*$-good:
\begin{itemize}
\item The cases $\xi<\theta_7$ and $\xi=\eta_\rho$ for $\rho>0$ follow by~\autoref{b0}~\ref{b0:6};
\item when $\xi=\eta_\rho+1+\varepsilon$ for some $\rho>0$ and $\varepsilon<\theta_7$, we split into four subcases:
\begin{itemize}
\item the case $g_0(\varepsilon)=0$ is clear by~\autoref{b6};
\item when $g_0(\varepsilon)=1$ it follows by~\autoref{d3};
\item when $g_0(\varepsilon)=2$, it follows by~\autoref{b0}~\ref{b0:4};
\item when $g_0(\varepsilon)=3$, use~\autoref{d3}; and
\item when $g_0(\varepsilon)=4$, it follows by~\autoref{b0}~\ref{b0:4};
\end{itemize}
\end{itemize}

To get that $\Por$ forces $\Cbf_{[\theta_7]^{<{\theta_2}}}\leqT\SNwf$: It suffices to verify as in the previous case that all iterands are $\lambda_1$-$\Rbf^f_\Gwf$-good (see \autoref{b0}~\ref{b0:6}), so by~\autoref{b9}  $\Cbf_{[\theta_7]^{<{\theta_2}}}\leqT\SNwf$ is forced.

On the other hand, since $\Por$ can be obtained by the FS iteration $\seq{\Por_{\theta_7,\xi},\Qnm_{\theta_7,\xi}}{\xi<\pi}$ and
all its iterands are $\theta_3$-$\aLc^*(\varrho,\rho)$-good (see~\autoref{KOpre}~\ref{KOb}),  $\Por$ forces $\Cbf_{[\theta_6]^{<{\theta_3}}}\leqT \aLc^*(\varrho,\rho^\id) \leqT \Cbf_{\Nwf}^{\perp}$ by applying~\autoref{b8}. To see that $\Por$ forces $\Cbf_{[\theta_7]^{<{\theta_4}}}\leqT\Cbf_{\SNwf}^{\perp}$ the argument is even simpler: This follows by~\autoref{b11} since $\Por$ is obtained by the FS iteration of precaliber $\theta_4$ posets.

Accordingly with~\autoref{exm:ufl}, the matrix iteration is ${<}\theta_5$-uf-extendable. Consequently, using~\autoref{mainpres}, $\Por$ forces $\Cbf_{[\theta_7]^{<{\theta_5}}}\leqT\baire$.

Item~\ref{e0:c}: We now prove that $\Por$ forces $\Lc^*\leqT\Cbf_{[\theta_7]^{<{\theta_1}}}$. This is also basically the same
to what is presented in~\cite[Thm.~6.9]{BCM2}.  Let $\dot A$ be a $\Por$-name for a subset of $\baire$ of size~${<}\theta_2$. By employing~\autoref{realint} we can can find $\alpha<\theta_7$ and $\rho<\theta_7\theta_6$ such
that $\dot A$ is $\Por_{\alpha,\eta_\rho}$-name. By~(E0), we can find a $\zeta<\theta_6$ and a $\Por_{\alpha,\eta_\rho}$-name $\dot N$ of a transitive model
of ZFC of size~${<}\theta_2$ such that $\Por_{\alpha,\eta_\rho}$ forces that $\dot N$ contains $\dot A$ as a subset and $\Loc^{\dot N}=\Qnm_{0,\alpha,\zeta}^\rho$, so the
generic slalom added by $\Qnm_{\xi}=\Qnm_{g(\varepsilon)}^\rho$ localizes all the reals in $\dot A$ where $\varepsilon:=g^{-1}(0,\alpha,\zeta)$ and $\xi=\eta_\rho+1+\varepsilon$.

The remaining of the statements of Item~\ref{e0:c} is basically proved as above argument. Item~\ref{e0:d} is exactly as proved in~\cite[Thm.~6.9]{BCM2}. However, we present their proof to keep the document self-contained. Since $\cf(\pi)=\theta_5$, by using~\autoref{lem:strongCohen}, $\Por$ forces that $\theta_5\leqT\Cbf_\Mwf$ and, by~\autoref{matsizebd}, $\Por$~forces $\theta_6\leqT\Cbf_\Mwf$. We are left with seeing that $\Por$~forces that $\Ed\leqT \theta_6\times\theta_6\theta_5$ (because $\theta_6\theta_5\eqT \theta_5$). For this purpose, for each $\rho<\theta_6\theta_5$ denote by $\dot e_\rho$ the $\Por_{\Delta(\eta_\rho),\eta_\rho+1}$-name of the eventually different real over $V_{t(\rho),\eta_\rho}$ added by $\Qnm_{t(\rho),\eta_\rho}$. In $V_{\gamma, \pi}$, we are going to define maps $\Psi_-:\baire\to\theta_6\times\theta_6\theta_5$ and $\Psi_+:\theta_6\times\theta_6\theta_5\to\baire$ such that, for any $x\in\baire$ and for any $(\alpha,\rho)\in\theta_6\times\theta_6\theta_5$, if $\Psi_-(x)\leq(\alpha,\rho)$ then $x\neq^\infty\Psi_+(\alpha,\rho)$.

For $x\in V_{\theta_6,\pi}\cap\baire$, we can find $\alpha_x<\theta_6$ and $\rho_x<\theta_6\theta_5$ such that $x\in V_{\alpha_x,\eta_{\rho_x}}$, so put $\Psi_-(x):=(\alpha_x,\rho_x)$; for $(\alpha, \rho)\in\theta_6\times\theta_6\theta_5$, find some $\rho'<\theta_6\theta_5$ such that $\rho'\geq\rho$ and $t(\rho')=\alpha$, and define $\Psi_+(\alpha,\rho):=\dot e_{\rho'}$. It is clear that $(\Psi_-,\Psi_+)$ is
the required Tukey connection.

On the other hand, since $\Por$ forces $\cov(\Mwf)=\non(\SNwf)=\dfrak=\cfrak=\theta_7$, $\Por$ forces $\theta_7^{\theta_7}\leqT\SNwf$ by using~\cite[Cor.~4.25]{CarMej23}, so item~\ref{e0:e} is done.
\end{proof}

The prior result can be modified to ensure that $\cov(\Mwf)<\dfrak$, however, $\non(\SNwf)<\cof(\SNwf)$ cannot be attained. 

\begin{theorem}\label{appl:II}
Let $\theta_1\leq\theta_2\leq\theta_3\leq\theta_4 \leq\theta_5\leq \theta_6\leq\theta_7$ be uncountable regular cardinals, and $\theta_8=\theta_8^{\aleph_0}\geq \theta_7$ satisfying\/ $\cof([\theta_8]^{<\theta_i}) = \theta_8$ for\/ $1\leq i\leq 5$.
    Then there is a~ccc poset forcing:
\begin{enumerate}[label=\rm (\alph*)]
\item\label{e1:a} $\cfrak = \theta_8$;
\item \label{e1:c} $\Lc^*\eqT\Cbf_{[\theta_7]^{<{\theta_1}}}$, $\SNwf\eqT\Cbf_{[\theta_7]^{<{\theta_2}}}$,  $\Cbf_{\Nwf}^{\perp}\eqT\Cbf_{[\theta_7]^{<{\theta_3}}}$, $\Cbf_{\SNwf}^{\perp}\eqT\Cbf_{[\theta_7]^{<{\theta_4}}}$,  and $\baire\eqT\Cbf_{[\theta_7]^{<{\theta_4}}}$;
\item \label{e1:d} $\theta_6\leqT\Cbf_\Mwf$, $\theta_7\leqT \Cbf_\Mwf$, and\/ $\Ed\leqT \theta_7\times\theta_6$; and
\item\label{e1:f} if\/ $\cf(\theta_8) = \theta_6$ then\/ $\theta_6^{\theta_6} \leqT \SNwf$.
\end{enumerate}
In particular, it forces
\begin{align*}
\add(\Nwf)=\theta_1\leq\add(\SNwf)=\theta_2\leq\cov(\Nwf)=\theta_3\leq \cov(\SNwf)= \supcov = \theta_4 \leq\\
\leq \bfrak=\theta_5
\leq\non(\Mwf)=\theta_6
\leq\cov(\Mwf)=\theta_7 \leq \non(\SNwf)=\dfrak=\cfrak=\theta_8.
\end{align*}
\end{theorem}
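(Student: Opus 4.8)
The plan is to replay the construction from the proof of \autoref{appl:I}, lengthening the iteration so that the continuum of the extension becomes $\theta_8$ while keeping the matrix \emph{height} fixed at $\theta_7$: this is exactly what pins $\cov(\Mwf)$ at $\theta_7$ and raises only $\dfrak$, $\non(\SNwf)$ and $\cfrak$ to $\theta_8$. Concretely, first recursively build increasing $\rho,\varrho,b\in\baire$ as in \autoref{appl:I}, so that by \autoref{genlink} and \autoref{d8} the poset $\Eor_b=\Eor_b^1$ is both $(\rho,\varrho^{\rho^{\id}})$-linked and $\sigma$-$\bar\rho$-linked, and fix an increasing $f\in\baire$ so that by \autoref{Qf-barrho} the poset $\Qor_f$ is $\sigma$-$\bar\rho$-linked. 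Then build a ${<}\theta_5$-uf-extendable matrix iteration $\Por=\Por_{\gamma,\pi}$ with $\gamma=\theta_7$ and $\pi=\theta_7\theta_8\theta_6$ (ordinal product, so $|\pi|=\theta_8$ and $\cf(\pi)=\theta_6$), organised exactly as there: the first $\theta_7$ stages are Cohen forcings added at the successor vertical levels; the remaining stages are partitioned into blocks of length $\theta_7$; the opening stage of a block indexed by $\rho$ adds the full eventually different forcing $\Eor$ (computed in the appropriate row model) at the vertical level $t(\rho)$, where $t$ has all fibres cofinal in $\pi$; and the subsequent $\theta_7$ stages insert, along a bijective bookkeeping, restrictions $(\Qor^*_j)^N$ of $\Loc,\Qor_f,\Bor,\Eor_b,\Dor$ to transitive models $N\models\mathsf{ZFC}$ of size ${<}\theta_{j+1}$, scheduled so that every $\Por_{\alpha,\eta_\rho}$-name ($\alpha<\theta_7$) for a ${<}\theta_{j+1}$-sized subset of the relevant space $X_j$ is eventually caught. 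As in \autoref{appl:I} this poset is ccc, coincides with the FS iteration $\seq{\Por_{\theta_7,\xi},\Qnm_{\theta_7,\xi}}{\xi<\pi}$, and forces $\cfrak=\theta_8$ (item (a)).

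The upper bounds are read off from the goodness of the iterands, column by column, exactly as in \autoref{appl:I}. Every iterand is $\theta_1$-$\Lc^*$-good --- \autoref{d3} being invoked precisely at the $\Qor_f$- and $\Eor_b$-stages, which are $\sigma$-$\bar\rho$-linked, and \autoref{b6}, \autoref{b0}~\ref{b0:4}, \autoref{b0}~\ref{b0:6} for the rest --- so \autoref{b8} forces $\Cbf_{[\theta_8]^{<\theta_1}}\leqT\Lc^*$; every iterand is $\theta_2$-$\Rbf^f_\Gwf$-good (\autoref{b0}~\ref{b0:6}), so \autoref{b9}, applied with $\theta_0=\aleph_1$, $\theta=\theta_2$, $\lambda=\theta_8=\theta_8^{\aleph_0}$, forces $\Cbf_{[\theta_8]^{<\theta_2}}\leqT\SNwf$; every iterand is $\theta_3$-$\aLc^*(\varrho,\rho)$-good (\autoref{KOpre}), so \autoref{b8} together with \autoref{KOpre}~\ref{KOa},~\ref{KOb} forces $\Cbf_{[\theta_8]^{<\theta_3}}\leqT\Cbf_\Nwf^\perp$; every iterand has precaliber $\theta_4$, so \autoref{b11} (with $\scf(\pi)=\theta_8$) forces $\Cbf_{[\theta_8]^{<\theta_4}}\leqT\Cbf_\SNwf^\perp$; the matrix is ${<}\theta_5$-uf-extendable (\autoref{exm:ufl}), so \autoref{mainpres} forces $\Cbf_{[\theta_7]^{<\theta_5}}\leqT\baire$, whence $\bfrak\le\theta_5$; and, since each iterand is also $\theta_6$-cc and $\theta_6$-$\baire$-good (the restricted Hechler posets $\Dor^N$ by their size ${<}\theta_5$ via \autoref{b6}, the rest by \autoref{b0}~\ref{b0:3} and $\sigma$-centredness), \autoref{b8} applied to $\baire$ forces $\theta_8=|\pi|\le\dfrak$. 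Finally \autoref{matsizebd} (using the $\theta_7$ Cohen columns) and \autoref{lem:strongCohen}/\autoref{b13} (using the Cohen reals appearing at countable-cofinality limit stages, along a cofinal $\theta_6$-sequence, $\cf(\pi)=\theta_6$) force $\theta_7\leqT\Cbf_\Mwf$ and $\theta_6\leqT\Cbf_\Mwf$, and the eventually different reals added at the block-opening stages give $\Ed\leqT\theta_7\times\theta_6$ as in \autoref{appl:I}; combining with the ZFC inequalities $\dfrak\le\cfrak$, $\cov(\Mwf)=\dfrak(\Ed)\le\dfrak(\theta_7\times\theta_6)=\theta_7$ and $\non(\Mwf)=\bfrak(\Ed)\ge\bfrak(\theta_7\times\theta_6)=\theta_6$ then pins down $\non(\Mwf)=\theta_6$, $\cov(\Mwf)=\theta_7$ and $\dfrak=\cfrak=\theta_8$ (item (d), and the remaining halves of (b)--(c)).

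For the lower bounds I would verify the hypothesis of \autoref{b12} for the relational systems $\Lc^*$, $\Cn$, $\aLc(b,1)$, $\baire$ and $\Rbf^f_\Gwf$ (with $\theta=\theta_1,\theta_3,\theta_4,\theta_5,\theta_2$ respectively): given a $\Por$-name $\dot A$ for a ${<}\theta_{j+1}$-sized subset of $X_j$, use \autoref{realint} to reflect $\dot A$ to some $\Por_{\alpha,\eta_\rho}$ with $\alpha<\theta_7$, then use the block bookkeeping to locate a later stage where $\Qnm_\xi=(\Qor^*_j)^N$ with $\dot A\in N$, so that the generic object added there (a localizing slalom, a random real, a $(b,1)$-eventually different real, a dominating real, or a generic strong-measure-zero cover) is $\Rbf$-dominating over $\dot A$ --- exactly the localization-by-genericity argument of \autoref{appl:I} and \cite[Thm.~6.9]{BCM2}. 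Thus \autoref{b12} forces $\add(\Nwf)\ge\theta_1$, $\cov(\Nwf)\ge\theta_3$, $\bfrak\ge\theta_5$, $\add(\SNwf)\ge\theta_2$ (using the $\Qor_f^N$-stages, which add $\Rbf^f_\Gwf$-dominating reals), and $\cov(\SNwf)\ge\theta_4$ (using the $\Eor_b^N$-stages together with $\balc_{b,1}\le\cov(\SNwf)$ from \autoref{c8}), so that $\supcov=\cov(\SNwf)=\theta_4$; matched against the upper bounds this yields the displayed chain of equalities. Item (f) is the one genuinely extra step: under the hypothesis $\cf(\theta_8)=\theta_6$ one has $\cov(\Mwf)=\theta_7\ge\theta_6$ and $\non(\SNwf)=\dfrak=\cfrak=\theta_8$ with $\cf(\theta_8)=\theta_6$, so \cite[Cor.~4.25]{CarMej23} applies (with parameter $\theta_6$, just as item (e) of \autoref{appl:I} applied it with $\theta_7$) and gives $\theta_6^{\theta_6}\leqT\SNwf$.

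The main obstacle --- and the reason the matrix-iteration machinery rather than a plain FS iteration is indispensable --- is keeping $\cov(\Mwf)$ pinned at $\theta_7$ while $\dfrak$ is raised to $\theta_8$: the eventually different stages $\Eor$ must be ${<}\theta_5$-uf-linked and sit at the $\theta_7$ matrix levels, so that --- by the uf-linkedness preservation underlying \autoref{mainpres} --- no dominating real over a matrix \emph{row} survives past the Cohen columns; this is precisely what makes $\Ed\leqT\theta_7\times\theta_6$ hold, hence $\cov(\Mwf)\le\theta_7$, despite the $\theta_8$-many Hechler stages. Beyond this point the work is the lengthy but essentially routine bookkeeping verification that a single iteration of length $\pi=\theta_7\theta_8\theta_6$ simultaneously meets the hypotheses of \autoref{b8}, \autoref{b9}, \autoref{b11}, \autoref{b12}, \autoref{mainpres}, \autoref{matsizebd} and \autoref{lem:strongCohen}; most delicately, that $\pi$ has the product shape $\lambda\delta$ with $\lambda=\theta_8=\theta_8^{\aleph_0}$ and $\cf(\pi)=\theta_6\ge\aleph_1$ required by the $\SNwf$-preservation theorem \autoref{b9}, and that the five families of ${<}\theta_{j+1}$-sized reflections together with the $\theta_7$ cofinal $\Eor$-levels all fit inside the $\theta_8$-many blocks.
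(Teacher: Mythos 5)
Your construction and the verification of items (a)--(d) follow the paper's route: the paper likewise replays \autoref{appl:I} with the length inflated to cardinality $\theta_8$ while the height stays at $\theta_7$ (it takes blocks of length $\theta_8$, i.e.\ $\eta_\rho=\theta_8\rho$ for $\rho<\theta_7\theta_6$ and a bijection $g\colon\theta_8\to\{0,\dots,4\}\times\theta_7\times\theta_8$, whereas you keep blocks of length $\theta_7$ and spread the $\cof([\theta_8]^{<\theta_i})=\theta_8$-many reflections across the $\theta_8\theta_6$-many blocks --- a harmless reorganization since the total slot count is still $\theta_8$ and names persist to later blocks). Your explicit derivation of $\dfrak\geq\theta_8$ from \autoref{b8} applied to $\baire$ is a needed step that the paper leaves implicit, and it is correct. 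Note also that your Tukey equivalences are stated with $\Cbf_{[\theta_8]^{<\theta_i}}$, which is what the argument actually yields (the $\theta_7$ in the theorem's item (b) appears to be a carry-over from \autoref{appl:I}).

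The one genuine gap is item (f). You invoke \cite[Cor.~4.25]{CarMej23} ``with parameter $\theta_6$,'' by analogy with item \ref{e0:e} of \autoref{appl:I}; but there that corollary is applied under the hypothesis $\cov(\Mwf)=\non(\SNwf)=\dfrak=\cfrak=\theta_7$, and precisely that hypothesis fails here, since your whole construction arranges $\cov(\Mwf)=\theta_7<\theta_8=\non(\SNwf)=\dfrak=\cfrak$. There is no evident way to feed ``$\cov(\Mwf)\geq\theta_6$'' into that corollary as a substitute. The paper instead establishes the principle $\mathbf{DS}(\theta_8\theta_6)$ for this particular iteration via \cite[Cor.~6.2]{CarMej23} (a forcing-theoretic fact about the cofinal structure of the iteration, not a consequence of the cardinal equalities) and then applies \cite[Thm.~4.24]{CarMej23}, whose hypothesis is that $\non(\SNwf)=\supcof$ and $\cf(\non(\SNwf))=\cf(\theta_8\theta_6)=\theta_6$; this is where $\cf(\theta_8)=\theta_6$ enters, yielding $\theta_6^{\theta_6}\leqT\SNwf$. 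So your conclusion is right and the cofinality condition you isolate is the relevant one, but the justification as written rests on a citation whose hypotheses are not met; you need the $\mathbf{DS}$ route (or an equivalent verification) to close item (f).
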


\begin{proof}
We proceed as in~\autoref{appl:I}, built a ${<}\theta_5$-uf-extendable matrix iteration $\Por$, but use $\eta_\rho:=\theta_8 \rho$ for $\rho<\theta_7\theta_6$, a bijection $g\colon \theta_8\to \{0,1,2,3,4\}\times \theta_7\times \theta_8$, and a function $t\colon \theta_7\theta_6\to \theta_7$ such that $\set{\rho<\theta_7\theta_6}{t(\rho)=\alpha}$ is cofinal in $\theta_7\theta_6$ for all $\alpha<\theta_7$. Then, $\Por$~forces~\ref{e1:a}--\ref{e1:d}.

To show Item~\ref{e1:f}, we use notions and results from~\cite{CarMej23} that we do not fully review. In this reference, we define a principle we denote by $\mathbf{DS}(\delta)$, with the parameter an ordinal $\delta$, which has a profound effect on $\cof(\SNwf)$. According to~\cite[Cor.~6.2]{CarMej23}, $\Por$~forces $\mathbf{DS}(\theta_8\theta_6)$. On the other hand,~\cite[Thm.~4.24]{CarMej23} states that, if $\mathbf{DS}(\delta)$ holds and $\non(\SNwf)=\supcof$ has the same cofinality as $\delta$, say $\theta$, then $\la\theta^\theta,{\leq}\ra\leqT\SNwf$. Since $\Por$~forces $\non(\SNwf)=\supcof=\theta_8$ and $\cf(\theta_8) = \cf(\theta_8 \theta_6) = \theta_6$, we conclude~\ref{e1:f}.
\end{proof}

We now wrap up this section by showing~\autoref{Thm:a1}.

\begin{theorem}\label{appl:III}
Let $\theta_1\leq\theta_2\leq\theta_3\leq\theta_4$ be uncountable regular cardinals, and $\theta_5$ a cardinal such that $\theta_5\geq\theta_4$ and\/ $\cof([\theta_5]^{<\theta_i})=\theta_4$ for $i=1,2$ and $\theta_5 = \theta_5^{\aleph_0}$. Then there is some ccc poset forcing:
\begin{enumerate}[label=\rm(\arabic*)]
    \item\label{SN1} $\cfrak=\theta_5$;

    \item $\Lc^*\eqT \Cbf_{[\theta_5]^{<{\theta_1}}}$;

    \item\label{SN3} $\baire \eqT \Cbf_{[\theta_5]^{<{\theta_2}}}$;

    \item\label{SN4} $\theta_4\leqT\Cbf_\Mwf$ and $\theta_5\leqT \Cbf_\Mwf$;

    \item\label{SN5} $\theta_4\leqT\Cbf^\perp_\SNwf$ and $\theta_5\leqT \Cbf^\perp_\SNwf$;

    \item\label{SN6} $\SNwf \leqT (\theta_4\times \theta_4)^{\theta_5}$; and

    \item\label{SN7} $\Cbf^\perp_\Nwf \leqT \theta_5\times\theta_4$.
\end{enumerate}
In particular, it is forced that:
\begin{align*}
\add(\Nwf)=\theta_1 &\leq\bfrak=\theta_2\leq
\add(\SNwf)= \cov(\SNwf)= \cov(\Nwf) = \non(\Mwf)=\theta_3\\
 & \leq \cov(\Mwf)= \non(\SNwf) = \non(\Nwf) = \theta_4\leq\dfrak=\cfrak=\theta_5.
\end{align*}
\end{theorem}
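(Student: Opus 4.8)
The plan is to re-run the uf-extendable matrix iteration argument used for \autoref{appl:I} and \autoref{appl:II}, adjusted to the present five target cardinals; I do not expect any idea beyond those constructions together with the goodness result \autoref{d3}. Since $\bfrak=\theta_2$ is the left-hand cardinal that has to be kept small through the ultrafilter bookkeeping, I would build a ${<}\theta_2$-uf-extendable matrix iteration $\Por=\Por_{\gamma,\pi}$ (\autoref{f4}). As a preliminary step, exactly as in the proof of \autoref{appl:I}, I fix by recursion increasing $\rho,\varrho,b\in\baire$ with $k^{k+1}\le\rho(k)$, $\sum_{i<\omega}\rho(i)^i/\varrho(i)<\infty$ and $k\,\varrho(k)^{\rho(k)^k}<b(k)$, and an increasing $f\in\baire$; then \autoref{genlink}, \autoref{d8} and \autoref{Qf-barrho} give that $\Eor_b$ is $(\rho,\varrho^{\rho^\id})$-linked and $\sigma$-$\bar\rho$-linked, and that $\Qor_f$ is $\sigma$-$\bar\rho$-linked. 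These two posets are precisely the iterands that are not $\sigma$-centered, and \autoref{d3} is what allows them to be used while still forcing $\add(\Nwf)=\theta_1$; this is the one place where the new tool of the paper is really needed.

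For the matrix I would take the height $\gamma$ with $|\gamma|=\theta_5$ and $\cf(\gamma)=\theta_4$, and the length $\pi$ with $|\pi|=\theta_5$ and $\cf(\pi)=\theta_3$ (for instance $\gamma=\theta_5\cdot\theta_4$ and $\pi=\gamma\cdot\theta_5\cdot\theta_3$ in ordinal arithmetic); the choice of \emph{distinct} cofinalities $\cf(\gamma)=\theta_4$ and $\cf(\pi)=\theta_3$ is exactly what will separate $\non(\Mwf)=\theta_3$ from $\cov(\Mwf)=\theta_4$. On the initial block $[0,\gamma)$ put $\Delta(\alpha)=\alpha+1$ and $\Qnm_\alpha=\Cor$; split $[\gamma,\pi)$ into blocks $[\eta_\rho,\eta_{\rho+1})$ of length $\gamma$ ($\eta_\rho=\gamma\rho$, $1\le\rho<\theta_5\theta_3$), put at the base stage $\eta_\rho$ the full eventually different forcing $\Eor^{V_{\Delta(\eta_\rho),\eta_\rho}}$ with $\Delta(\eta_\rho)=t(\rho)$ for a bookkeeping $t$ attaining each ordinal ${<}\gamma$ cofinally, and at the stages $\eta_\rho+1+\varepsilon$, driven by a bijection $g$, insert nice names $\Qnm^\rho_{g(\varepsilon)}$ for restrictions $(\Qor^*_j)^N$ of the five posets $\Loc$, $\Dor$, $\Qor_f$, $\Bor$, $\Eor_b$ (with associated $X_j\in\{\baire,\baire,\SNwf,\Bwf(\cantor)\cap\Nwf,\prod b\}$) to transitive $\thzfc$-models $N$ whose sizes are capped respectively by $\theta_1,\theta_2,\theta_3,\theta_3,\theta_3$ --- i.e.\ precisely the scheme (C1)--(C3), (E$j$) of \autoref{appl:I} with the arities shifted. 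All iterands have size ${<}\theta_5$, so $\Por$ is ccc, and using $\theta_5=\theta_5^{\aleph_0}$ it forces $\cfrak=\theta_5$, which gives item~\ref{SN1}.

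The remaining Tukey connections then come from the preservation menu of \autoref{sec:s3}--\autoref{sec:s5}, reading $\Por$ as the FS iteration $\seq{\Por_{\gamma,\xi},\Qnm_{\gamma,\xi}}{\xi<\pi}$ when convenient. The $\Lc^*$-equivalence follows from \autoref{b8} once every iterand is $\theta_1$-$\Lc^*$-good --- the cases $\Qor_f$ and $\Eor_b$ by \autoref{d3}, random by its $\sigma$-$\bar\rho$-linkedness (or \autoref{b0}\ref{b0:4}), $\Loc^N$ by size (\autoref{b6}), $\Dor^N$ by $\sigma$-centeredness --- together with cofinally many $\Loc^N$'s localizing every ${<}\theta_1$-sized family. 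The equivalence $\baire\eqT\Cbf_{[\theta_5]^{<\theta_2}}$ follows from \autoref{mainpres} (the matrix is ${<}\theta_2$-uf-extendable since every iterand is either $\sigma$-uf-linked or of size ${<}\theta_2$) plus cofinal $\Dor^N$'s and \autoref{b12}. The $\SNwf$- and $\Cbf^\perp_\SNwf$-statements follow from \autoref{b9} ($\theta_3$-$\Rbf^f_\Gwf$-goodness of every iterand, the $\Qor_f^N$'s by size), \autoref{b11} (precaliber $\theta_3$) and \autoref{c8} together with cofinal $\Qor_f^N$/$\Eor_b^N$; item~\ref{SN6} ($\SNwf\leqT(\theta_4\times\theta_4)^{\theta_5}$) follows from the $\SNwf$-preservation apparatus of \cite{BCM2,CarMej23} as in the proofs of \autoref{appl:I} and \autoref{appl:II}, using $\non(\SNwf)=\supcof=\theta_4$. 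The $\Cbf^\perp_\Nwf$-statement follows from \autoref{KOpre} and \autoref{b8} ($\theta_3$-$\aLc^*(\varrho,\rho)$-goodness of every iterand) plus cofinal $\Bor^N$'s. The $\Cbf_\Mwf$-statements follow from \autoref{lem:strongCohen} applied to the length-direction Cohens ($\cf(\pi)=\theta_3$) and \autoref{matsizebd} applied to the height-direction Cohens ($\cf(\gamma)=\theta_4$), with the full $\Eor$-stages delivering $\theta_3\le\non(\Mwf)$ through \autoref{b12} for $\Ed$ --- a ${<}\theta_3$-sized family, being of size ${<}\cf(\gamma)$, lives at some level $\alpha<\gamma$ and is eventually-different-dominated by some $\Eor^{V_{t(\rho),\eta_\rho}}$. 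Combining these inequalities with the ZFC arrows of \autoref{Cichonwith_SN} yields the ``in particular'' constellation.

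The only genuinely new point --- hence where I expect the real work to be --- is checking that $\Qor_f$ and $\Eor_b$ are simultaneously $\theta_1$-$\Lc^*$-good, $\theta_3$-$\Rbf^f_\Gwf$-good, $\theta_3$-$\aLc^*(\varrho,\rho)$-good, precaliber $\theta_3$, and ${<}\theta_2$-uf-linked: the first clause is exactly \autoref{d3} applied to their $\sigma$-$\bar\rho$-linkedness, and the remaining clauses are inherited from \cite{CMR2,BCM,BCM2}. Everything else is bookkeeping coherence: one checks that the small (size ${<}\theta_3$) random and $\Qor_f$ restrictions add no dominating reals (so $\bfrak=\theta_2<\theta_3$ is not inflated) but are still ${<}\theta_2$-uf-linked by their sheer size (so \autoref{mainpres} applies verbatim), and that the cofinality split $\cf(\gamma)=\theta_4>\theta_3=\cf(\pi)$ genuinely delivers $\non(\Mwf)=\theta_3$ (capped above by the length Cohens, pushed up by the $\Eor$-stages) while $\cov(\Mwf)=\theta_4$ (pushed up by the height Cohens, capped above by $\cov(\Mwf)\le\non(\SNwf)=\theta_4$). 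Being a near-verbatim re-run of \autoref{appl:I} and \autoref{appl:II}, I do not anticipate further obstacles.
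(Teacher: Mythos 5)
Your high-level reading is right --- a ${<}\theta_2$-uf-extendable matrix with $\cf(\gamma)=\theta_4$ and $\cf(\pi)=\theta_3$, with \autoref{d3} supplying the $\Lc^*$-goodness that keeps $\add(\Nwf)=\theta_1$ --- but the concrete choice of iterands you propose is not the paper's, and it fails at the $\theta_3$/$\theta_4$ levels. The paper does \emph{not} rerun the scheme (E$j$) of \autoref{appl:I}: it takes $\gamma=\theta_4$, keeps only $\Loc^N$ ($|N|<\theta_1$) and $\Dor^N$ ($|N|<\theta_2$) as bookkept small posets, and otherwise inserts the \emph{full} posets $\Qor_{\dot f^\rho_\varepsilon}^{V_{\Delta(\xi),\xi}}$ (for an enumeration of \emph{all} nice names of increasing functions over each $\Por_{t(\rho),\theta_\rho}$) and $\Bor^{V_{\Delta(\xi),\xi}}$, with $\Delta(\xi)=t(\rho)$ ranging cofinally over $\theta_4$; there is no $\Eor_b$ and no full $\Eor$. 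All $\theta_3$-level tasks are then handled by reflection (\autoref{realint}: a ${<}\theta_3$-family lives in some $V_{\alpha,\xi}$ because $\cf(\gamma)=\theta_4$ and $\cf(\pi)=\theta_3$), not by enumerating names. This matters because the hypotheses give $\cof([\theta_5]^{<\theta_i})=\theta_5$ only for $i=1,2$: your bookkeeping of $(\Qor_f)^N,\Bor^N,\Eor_b^N$ over all ${<}\theta_3$-sized name-families needs $\theta_5^{<\theta_3}=\theta_5$, which is not assumed.

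The more damaging problem is $\non(\Nwf)$. In your design every iterand is either $\sigma$-centered or of size ${<}\theta_3$, hence $\theta_3$-$\aLc^*(\varrho,\rho)$-good, so \autoref{b8} together with \autoref{KOpre}~\ref{KOb} forces $|\pi|=\theta_5\le\dfrak(\aLc^*(\varrho,\rho))\le\non(\Nwf)$: you get $\non(\Nwf)=\theta_5$, not $\theta_4$. (Relatedly, you cite \autoref{KOpre} and \autoref{b8} as yielding item \ref{SN7}, but they yield a Tukey inequality in the opposite direction --- an upper bound on $\cov(\Nwf)$ and a lower bound on $\non(\Nwf)$.) The bound $\non(\Nwf)\le\theta_4$ and item \ref{SN7} come precisely from the full randoms: a null set reflects to some $V_{\alpha,\xi}$ and is avoided by the random generic over some $V_{t(\rho),\xi'}\supseteq V_{\alpha,\xi}$, and these generics are indexed by the $\theta_4\times\theta_4\theta_3$-cofinal structure of the matrix; small $\Bor^N$'s only kill the null sets coded inside their own $N$ and give no such $\theta_4$-sized witness. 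The same applies to item \ref{SN6}: the map $\Phi_+$ needs, for \emph{every} increasing $f$ of the extension and every column, a $\Qor_f$-generic over the corresponding intermediate model, which only the full $\Qor_{\dot f^\rho_\varepsilon}$'s provide (\autoref{appl:I} and \autoref{appl:II} contain no item of this shape to imitate). A further small slip: posets of size ${<}\theta_3$ are not ``${<}\theta_2$-uf-linked by sheer size''; the relevant iterands are ${<}\theta_2$-uf-linked because they are in fact $\sigma$-uf-linked (\autoref{exm:ufl}). Note finally that the full-poset design is exactly where \autoref{d3} becomes indispensable: full $\Bor$ and full $\Qor_f$ cannot be made $\theta_1$-$\Lc^*$-good by any cardinality argument.
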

\begin{proof}
First add $\theta_4$-many Cohen reals, and afterwards make a ${<}\theta_2$-uf-extendable matrix iteration $\Por = \Por_{\gamma,\pi}$ with $\gamma = \lambda_4$ and $\pi = \theta_4 + \theta_5\theta_4\theta_3$, defining the first $\theta_4$-many steps as in~\ref{cohbas}, but use $\eta_\rho:=\theta_5 \rho$ for $\rho<\theta_4\theta_3$, a bijection $g\colon \theta_5\to \{0,1\}\times \theta_4\times \theta_3$, and a function $t\colon \theta_4\theta_3\to \theta_4$ such that $\{\rho<\theta_4\theta_3\colon t(\rho)=\alpha\}$ is cofinal in $\theta_4\theta_3$ for all $\alpha<\theta_4$.

For $\rho<\theta_4\theta_3$ and $\alpha<\theta_4$, choose
\begin{enumerate}[label=(F\arabic*)]
\setcounter{enumi}{-1}
   \item\label{F0} a collection $\set{\Qnm_{\alpha,\zeta}^\rho}{\zeta<\theta_5}$ of nice $\Por_{\alpha,\theta_\rho}$-names for posets of the form $\Loc^N$ for some transitive model $N$ of ZFC with $|N|<\theta_{1}$
   such that, for any $\Por_{\alpha,\theta_\rho}$-name $\dot F$ of a~subset of $\baire$ of size ${<}\theta_{1}$, there is some $\zeta<\theta_5$ such that, in $V_{\alpha,\theta_\rho}$, $\Qnm^\rho_{\alpha,\zeta} = \Loc^N$ for some $N$ containing $\dot F$;
   \item\label{F1} a collection $\set{\Qnm_{\alpha,\zeta}^\rho}{\zeta<\theta_5}$ of nice $\Por_{\alpha,\theta_\rho}$-names for posets of the form $\Dor^N$ for some transitive model $N$ of ZFC with $|N|<\theta_{2}$
   such that, for any $\Por_{\alpha,\theta_\rho}$-name $\dot F$ of a~subset of $\baire$ of size ${<}\theta_{2}$, there is some $\zeta<\theta_5$ such that, in $V_{\alpha,\theta_\rho}$, $\Qnm^\rho_{\alpha,\zeta} = \Dor^N$ for some $N$ containing $\dot F$; and
\end{enumerate}
\begin{enumerate}[label=(F${}^\rho$)]
   \item\label{Frho} enumeration $\set{\dot{f}_{\zeta}^\rho}{\zeta<\theta_5}$ of all the nice $\Por_{t(\rho),\theta_\rho}$-names for all the members of $(\omega\menos\{0\})^\omega$,
\end{enumerate}
and set:
\begin{enumerate}[label=\rm (C\arabic*)]
\setcounter{enumi}{1}

    \item if $\xi=\theta_\rho+4\varepsilon$ for some $\varepsilon<\theta_5$, put $\Delta(\xi):=t(\rho)$ and
    $\Qnm^*_{\xi}=\Qor_{\dot f^\rho_{\varepsilon}}^{V_{\Delta(\xi),\xi}}$;

    \item if $\xi=\theta_\rho+4\varepsilon+1$ for some $\varepsilon<\theta_5$, put $\Delta(\xi):=t(\rho)$ and
    $\Qnm^*_{\xi}=\Bor^{V_{\Delta(\xi),\xi}}$; and

    \item if $\xi=\theta_\rho+4\varepsilon+2$ for some $\varepsilon<\theta_5$, put $\Delta(\xi):=g_1(\varepsilon)$ and $\Qnm^*_{\xi}=\Qnm^{\rho}_{g(\varepsilon)}$.
\end{enumerate}
The construction is indeed a ${<}\theta_2$-uf-extendable iteration. We can now show that the construction does what we want, apart from
keeping $\add(\Nwf)$ small.

Notice that $\Por$ forces~\ref{SN1}--\ref{SN4} and~\ref{SN7}. This can proved exactly as in~\autoref{appl:I}. The proof of the rest of the claims is the same as presented in~\cite[Thm.~4.9]{CMR2} however, to ensure the paper is self-contained we include their proofs.

Directly by applying~\autoref{b12} to $\seq{\Por_{\theta_4,\xi}}{ \xi\leq \pi}$ and $\seq{\Por_{\alpha,\pi}}{ \alpha\leq \theta_4}$, respectively, we obtain that $\Por$ forces $\theta_3\leqT\Cbf^\perp_\SNwf$ and $\theta_4\leqT \Cbf^\perp_\SNwf$, so \ref{SN5}~is done.

To conclude it remains to see~\ref{SN6}.  Work in $V_{\gamma,\pi}$. Let $D\subseteq\baire$ be the set of all increasing functions. For each $f\in D$ let $f'\in\baire$ be defined by $f'(i) := f((i+1)^2)$. Since $\theta_4\theta_3\eqT \theta_3$, we construct a Tukey connection $\Phi_-\colon \SNwf\to (\theta_4\times\theta_4\theta_3)^D$, $\Phi_+\colon (\theta_4\times\theta_4\theta_3)^D \to \SNwf$.

For $A\in\SNwf$, we can find $\seq{\tau^A_f}{f\in D}\subseteq (2^{<\omega})^\omega$ such that $\hgt_{\tau^A_f} = f'$ and $A\subseteq \bigcap_{f\in D}[\tau^A_f]_\infty$. By~\autoref{realint}, for each $f\in D$ find $(\alpha^A_f,\rho^A_f)\in \theta_4\times\theta_4\theta_3$ such that $f,\tau^A_f\in\smash{V_{\alpha^A_f,\theta_{\rho^A_f}}}$. So set $\Phi_-(A):= \seq{(\alpha^A_f,\rho^A_f)}{f\in D}$.

Whenever $\xi = \theta_\rho + 4\varepsilon +1$ for some $\rho<\theta_4\theta_3$ and $\varepsilon<\theta_5$, let $\sigma^*_{\xi} \in 2^{f^\rho_\varepsilon}$ be the $\Qor_{f^\rho_\varepsilon}$-generic real over $V_{\Delta(\xi),\xi}$ added in $V_{\Delta(\xi),\xi+1}$.
Let $z=\seq{(\beta_f,\varrho_f)}{f\in D}$ in $(\theta_4\times \theta_4\theta_3)^D$. For each $f\in D$, find $\varrho'_f\geq \varrho_f$ in $\theta_4\theta_3$ such that $t(\varrho'_f) = \beta_f$. When $f\in \smash{V_{\beta_f,\theta_{\varrho'_f}}}$, find $\varepsilon_f<\theta_5$ such that $f=f^{\varrho'_f}_{\varepsilon_f}$, and let $\sigma_f := \sigma^*_{\xi_f}$ where $\xi_f:= \theta_{\varrho'_f}+3\varepsilon_f+1$, otherwise let $\sigma_f$ be anything in~$2^f$. Set $\Phi_+(z) := \bigcap_{f\in D}\bigcup_{n<\omega}[\sigma_f(n)]$, which is clearly in $\SNwf$.

It remains to show, by using the notation above, that $\Phi_-(A)\leq z$ implies $A\subseteq \Phi_+(z)$. If $\Phi_-(A)\leq z$, i.e.\ $\alpha^A_f\leq \beta_f$ and $\rho^A_f\leq \varrho_f$ for all $f\in D$, then $f,\tau^A_f\in V_{\beta_f,\varrho'_f}$, so $\sigma_f = \sigma^*_{\xi_f}$ and $[\tau^A_f]_\infty \subseteq \bigcup_{n<\omega}[\sigma_f(n)]$. Therefore, $A\subseteq \Phi_+(z)$.
\end{proof}

\section{Open problems}\label{sec:s6}

As we mentioned in~\autoref{sec:intro}, it is still unknown.

\begin{question}[{\cite[Sec.~7]{CMR2}}]\label{Qlowcf}
In~$\thzfc$,  which of the cardinals\/ $\bfrak$, $\dfrak$, $\supcof$, $\non(\Nwf)$ and\/ $\cof(\Nwf)$ are lower bounds of\/ $\cof(\SNwf)$?
\end{question}

In~\cite[Cor.~4.22]{CarMej23}, it was proved that if $\dfrak\leq\cof(\SNwf)$ then $\cov(\Mwf)<\cof(\SNwf)$, so whether $\dfrak\leq\cof(\SNwf)$ is true in $\thzfc$, we will have $\cov(\Mwf)<\cof(\SNwf)$. This would answer the following open problem:

\begin{question}[Yorioka~{\cite{Yorioka}}]
Does\/ $\thzfc$ prove\/ $\aleph_1<\cof(\SNwf)$?
\end{question}

Concerning Yorioka ideals, it remains unknown.

\begin{question}[{\cite[Sec.~7]{CM}}]
Does\/ $\thzfc$ prove\/ $\supcof=\cof(\Nwf)$ and\/ $\minadd = \add(\Nwf)$?
\end{question}

\subsection*{Acknowledgments}

The authors express their gratitude to Dr.\ Diego Mej\'ia for pointing us out that the $\sigma$-$\bar\rho$-linked property does not keep $\cov(\Nwf)$ small in generic extensions because random forcing is $\sigma$-$\bar\rho$-linked.

{\small
\bibliography{bibli}
\bibliographystyle{alpha}
}


\end{document}